\g@addto@macro\normalsize{%
  \setlength\abovedisplayskip{7pt}
  \setlength\belowdisplayskip{7pt}
  \setlength\abovedisplayshortskip{7pt}
  \setlength\belowdisplayshortskip{7pt}
}
\titlespacing*{\section}{0pt}{3.5ex plus 0ex minus 0ex}{1.5ex plus 0ex}
\titlespacing*{\subsection}{0pt}{3.5ex plus 0ex minus 0ex}{1.5ex plus 0ex}
\titlespacing*{\subsubsection}{0pt}{3.5ex plus 0ex minus 0ex}{1.5ex plus 0ex}
\newtheorem{theo}{Theorem}[section]
\newtheorem{cor}[theo]{Corollary}
\newtheorem{conj}[theo]{Conjecture}
\newtheorem{question}[theo]{Question}
\newtheorem{lemma}[theo]{Lemma}
\newtheorem{prop}[theo]{Proposition}
\newtheorem*{prop*}{Proposition}
\newtheorem{thmx}{Theorem}
\newtheorem{corx}[thmx]{Corollary}
\newtheorem*{corxD}{Corollary D}
\newtheoremstyle{definition}{2mm}{2mm}{}{}{\bfseries}{.}{.5em}{}
\theoremstyle{definition}
\newtheorem{defn}[theo]{Definition}
\newtheorem{remark}[theo]{Remark}
\newtheorem{ex}[theo]{Example}
\newcommand{\N}{\mathbb{N}}
\newcommand{\Z}{\mathbb{Z}}
\newcommand{\C}{\mathbb{C}}
\newcommand{\Q}{\mathbb{Q}}
\newcommand{\R}{\mathbb{R}}
\newcommand{\E}{\mathbb{E}}
\newcommand{\sF}{\mathcal{F}}
\newcommand{\sC}{\mathcal{C}}
\newcommand{\sB}{\mathcal{B}}
\newcommand{\ws}{\textup{\textsf{{WP}}}}
\newcommand{\Rspa}{\R\textup{\textsf{-span}}}
\newcommand{\bE}{\mathop{\mathbb{E}}}
\newcommand{\T}{\mathbb{T}}
\newcommand{\norm}[1]{\left\lVert#1\right\rVert}
\newcommand{\ind}[1]{\mathbbm{1}_{#1}}
\newcommand{\iplim}{\mathop{\textup{\textsf{IP-}}\lim}}
\providecommand{\norm}[1]{\lVert #1\rVert}
\begin{document}

\author{Felipe Hernández}
\date{\small \today}
\title{{\bfseries Multiple Polynomial Recurrence in Weyl Systems}}
\maketitle

\begin{abstract}
In this work we give a full characterization of sets of multiple polynomial recurrence in Weyl systems, which are ergodic unipotent affine transformations on products of tori and finite abelian groups. In particular, we show that measurable and topological recurrence in Weyl systems coincide. Our analysis also yields a structure theorem for polynomial multicorrelation sequences in Weyl systems. These results stem from an in-depth study of the Weyl complexity of a set of polynomials and the introduction of a new concept: the \textit{Weyl polynomials} generated by a set of polynomials.
\end{abstract}

\section{Introduction} 
Recurrence is a prominent area of research in ergodic theory and topological dynamics. One of the central results in this area is Poincaré's recurrence theorem \cite{Poincare1890}, stating that for any measure-preserving system $(X, \mathcal{B}, \mu,T)$ and $A \in \mathcal{B}$ with $\mu(A)>0$, there is $n \in \N$ such that $\mu(A\cap T^{-n}A)>0.$ This foundational result motivated, among many other things, the definition of \textit{sets of measurable recurrence}. A set $R\subseteq \N$ is said to be a \textit{set of measurable recurrence} if for each measure-preserving system $(X, \mathcal{B},  \mu,T)$ and $A\in  \mathcal{B}$ with $\mu(A)>0$, there is $n\in R$ such that $\mu(A\cap T^{-n}A)>0.$ In \cite{Furstenberg77} Furstenberg  proved a far-reaching generalization of Poincaré's recurrence theorem. He showed that for any $k\in \N$, any measure-preserving system $(X, \mathcal{B},  \mu,T)$, and any  $A \in \mathcal{B}$ with $\mu(A)>0$, there is $n \in \N$ such that $\mu(A\cap T^{-n}A\cap \cdots \cap T^{-kn}A)>0$. Furstenberg's result also implies the celebrated Szemerédi's theorem on arithmetic progressions \cite{Szemeredi75}.

A polynomial $p\in \Q[x]$ is \textit{integral} if $p(\Z)\subseteq \Z$. A polynomial extension of Furstenberg's multiple recurrence theorem was provided by V. Bergelson and A. Leibman 
 \cite{Bergelson_Leibman96} in their polynomial extension of Szemerédi's theorem. A special case of their result reveals that for any measure-preserving system $(X, \mathcal{B}, \mu,T)$, integral polynomials $p_1(n),\ldots, p_k(n)$ with zero constant term, and $A \in \mathcal{B}$ with $\mu(A)>0$, there is $n\in \N$ such that $\mu(A\cap T^{-p_1(n)}A \cap \cdots T^{-p_k(n)}A)>0$. 

Bergelson and Leibman's result motivates the introduction of more refined recurrence schemes. Given integral polynomials $p_1,\ldots,p_k$ and a measure-preserving system $(X, \mathcal{B},\mu,T)$, a set $R\subseteq \N$ is a set of \textit{$\{p_1(n),\ldots,p_k(n)\}$-measurable recurrence for $(X, \mathcal{B},\mu,T)$} if for each set $A\in \mathcal{B}$ with $\mu(A)>0$ there is $n\in R$ such that $ \mu(A\cap T^{-p_1(n)}A \cap \cdots T^{-p_k(n)}A)>0 $. In addition, we say that a set $R\subseteq \N$ is a set of \textit{$\{p_1(n),\ldots,p_k(n)\}$-measurable recurrence} if it is a set of $\{p_1(n),\ldots,p_k(n)\}$-measurable recurrence for every measure-preserving system. Note that sets of $\{n\}$-measurable recurrence are precisely sets of measurable recurrence. It is worth mentioning that the notion of multiple recurrence for families of integer sequences was introduced and applied to polynomials in \cite[Def. 2.3]{FLWPOWERSOFSEQUENCES} and in \cite[Def. 3.1]{Frantzikinakis11}. 

The notion of $\{p_1(n),\ldots,p_k(n)\}$-measurable recurrence has a natural counterpart in topological dynamics. Given a minimal topological system $(X,T)$, a set $R\subseteq \N$ is a set of \textit{$\{p_1(n),\ldots,p_k(n)\}$-topological recurrence for $(X,T)$} if for any nonempty open set $U\subseteq X$ there is $n\in R$ such that $U\cap T^{-p_1(n)}U\cap \cdots \cap T^{-p_k(n)}U \neq \emptyset$. As in the measurable case, if the system is not specified, we assume the recurrence property holds for all minimal topological systems.

 A natural question arises when considering different recurrence schemes and their interactions. Specifically, given finite sets $P$ and $Q$ of integral polynomials, is it true that for every $R\subseteq \N$, if $R$ is a set of $P$-measurable (respectively topological) recurrence, then it is also a set of $Q$-measurable (respectively topological) recurrence? This type of question was first raised by Bergelson in \cite[Question 7 in Chapter 5]{Bergelson96} where he asked whether for $k\geq 2$ there is a set of $\{n,\ldots,kn\}$-measurable recurrence that is not a set of $\{n,\ldots,(k+1)n\}$-measurable recurrence. For $k=2$ the negative answer to Bergelson question is contained in \cite[Pages 177-178]{Furstenberg81}. The general result was obtained by Frantzikinakis, Lesigne, and Weirld \cite{Frantzikinakis_Lesigne_Wierdl_sets_k-recurrence:2006} who showed that $\{n,\ldots, kn \}$-measurable recurrence does not imply $\{n,\ldots,(k+1)n\}$-measurable recurrence. Their argument crucially relies on analyzing recurrence in connected Weyl systems, which are unipotent affine transformations on finite-dimensional tori --- for instance, the two-dimensional torus with the transformation $T(x,y)=(x+\alpha,x+y)$, for $\alpha\in \R\setminus \Q$, is a Weyl system (for a proper definition of Weyl systems, see \cref{sec2}).

Less is known about the interplay of general polynomial recurrence schemes. One of the few results in this direction is due to Bergelson and Håland who  showed in \cite{MR1412598} that the set $\{\lfloor \lfloor \sqrt{2} n\rfloor \sqrt{2}\rfloor : n\in \N\}$ is a set of $\{n^2\}$-measurable recurrence that is not a set of $\{n\}$-measurable recurrence. Later, Frantzikinakis \cite{Frantzikinakis08} showed that for any integral polynomial $p$ with $p(0)=0$ and degree greater than $1$, $\{p(n)\}$-measurable recurrence and $\{n\}$-measurable recurrence are in \textit{general position}. For two sets of integral polynomials $P$ and $Q$, we say that $Q$-measurable recurrence and $P$-measurable recurrence are in general position if there are sets $R_1,R_2\subseteq \N$ such that $R_1$ is a set of $P$-measurable recurrence but not of $Q$-measurable recurrence and $R_2$ is a set of $Q$-measurable recurrence but not of $P$-measurable recurrence. Frantzikinakis' method also crucially relies on the study of Weyl systems. This naturally leads to the following questions.

\begin{question}\label{q4}
 Let $P=\{p_1,\ldots,p_r\}$ and $Q=\{q_1,\ldots,q_l\}$ be two sets of integral polynomials with zero constant term. 
 \begin{enumerate}[{\bf (i)}]
     \item  What are sufficient and necessary conditions on $P$ and $Q$ for the existence of a set $R\subseteq \N$ of $P$-measurable recurrence in Weyl systems that is not a set of $Q$-measurable recurrence in Weyl systems?
     \item   In case it is possible to find conditions providing an answer to question (i), are they sufficient for proving that there is a set $R\subseteq \N$ of $P$-measurable recurrence that is not a set of $Q$-measurable recurrence?
 \end{enumerate}
\end{question}
\cref{q4} (ii) asks if it is sufficient to study Weyl systems to obtain conditions for separating polynomial recurrence schemes. We know that these conditions cannot be necessary. A special case of one of our results (see \cref{characterization-of-recurrence-Weyl} and \cref{ex-double-linear-implies-quadratic}) shows that every set of $\{n,2n\}$-measurable recurrence for Weyl systems is a set of $\{n^2\}$-measurable recurrence for Weyl systems. However, it was shown by Griesmer \cite{GRIESMER_2024} that there is a set of $\{n,2n\}$-measurable recurrence that is not a set of $\{n^2\}$-measurable recurrence, answering a question of Frantzikinakis, Lesigne, and Wierdl \cite[remarks on p.~843]{Frantzikinakis_Lesigne_Wierdl_sets_k-recurrence:2006}.  

A glimpse into a possible answer for \cref{q4} (i) is provided by Host, Kra, and Maass \cite{Host_Kra_Maass_variations_top_recurrence:2016} who proved that a set is a set of $\{n\}$-topological recurrence in Kronecker systems if and only if is a set of $\{n\}$-topological recurrence in nilsystems. Kronecker systems are ergodic rotations on a compact abelian group, whereas nilsystems corresponds to a more general class of systems including Kronecker systems and Weyl systems (see \cref{sec2} for more details and examples). Host, Kra, and Maass' result suggests that it may be possible to characterize schemes of recurrence in Weyl systems through schemes of recurrence in Kronecker systems. Related work was done in \cite[Theorem C]{Huang_Shao_Ye_nilbohr_automorphy:2016} where the family of sets of $\{n,\ldots,kn\}$-recurrence in nilsystems was characterized through special sets in the torus defined with generalized polynomials.

Kronecker systems form the simplest type of dynamical systems, making recurrence easier to understand. For instance, for any set of polynomials $P=\{p_1,\ldots,p_r\}$ it is relatively simple to show that every subset of $\N$ is a set of $P$-measurable recurrence for Kronecker systems if and only if is a set of $P$-topological recurrence for Kronecker systems. While it remains true that every set of $P$-measurable recurrence is a set of $P$-topological recurrence  (see \cref{measure-implies-topological}), the converse is no longer true in every system, as shown in \cite{Kriz87} by Kříž via a combinatorial construction (see also \cite{McCutcheon93,griesmer2023} for details on the dynamical interpretation of Kříž's example). If we consider the possibility that an answer to \cref{q4} involves conditions related to Kronecker systems, then the following question seems natural to consider.

\begin{question}\label{q3}
    Let $P=\{p_1,\ldots,p_r\}$ be a set of integral polynomials with zero constant term. Is it true that every set of $P$-topological recurrence in Weyl systems is a set of $P$-measurable recurrence in Weyl systems?
\end{question}

Our main result gives an answer to \cref{q4} (i) and a positive answer to \cref{q3}.
  \begin{thmx}\label{teo-A}
    Let $r\in \N$ and $P=\{p_1,...,p_r\}$ be a set of integral polynomials with zero constant term. Then there exist rationally independent integral polynomials $Q=\{q_1,...,q_r\}$ such that for each $R\subseteq \Z$, the following are equivalent:
    \begin{enumerate}
        \item  $R$ is a set of $\{p_1,\ldots,p_r\}$-measurable recurrence for Weyl systems,
        \item  $R$ is a set of $\{p_1,\ldots,p_r\}$-topological recurrence for Weyl systems.
        \item $R$ is a set of $\{q_1,\ldots,q_r\}$-recurrence for Kronecker systems. 
    \end{enumerate}
\end{thmx}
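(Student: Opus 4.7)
The proof splits into three implications. The direction (1)$\Rightarrow$(2) is the instance of \cref{measure-implies-topological} restricted to Weyl systems. The substantive work is constructing $Q$ and proving both (3)$\Rightarrow$(1) and (2)$\Rightarrow$(3). My plan is to define $Q$ through the \emph{Weyl polynomials generated by $P$}, the concept highlighted in the abstract. The guiding idea is that when $T$ is a unipotent affine transformation on a Weyl system, the iterate $T^m x$ has coordinates that are integer polynomials in $m$, so substituting $m\mapsto p_i(n)$ yields a family of polynomials in $n$. Running this substitution across all Weyl systems and extracting a maximal rationally independent subfamily of cardinality $r$ should produce the desired $q_1,\ldots,q_r$. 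This construction is governed by the Weyl complexity of $P$, which measures precisely how much polynomial information the iterates $T^{p_i(n)}$ can encode.

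\textbf{From Kronecker recurrence to Weyl recurrence.} For (3)$\Rightarrow$(1), given a Weyl system $(X,\sB,\mu,T)$ and $A\in\sB$ with $\mu(A)>0$, I plan to invoke the structure theorem for polynomial multicorrelation sequences in Weyl systems (the other advertised outcome of the abstract) to obtain an approximation
\[\mu\bigl(A \cap T^{-p_1(n)}A \cap \cdots \cap T^{-p_r(n)}A\bigr) = F\bigl(q_1(n)\alpha,\ldots, q_r(n)\alpha\bigr) + \varepsilon(n),\]
where $F$ is a nonnegative continuous function with positive mean on a finite-dimensional torus $\T^d$ attached to a Kronecker factor of an extension of $(X,T)$, $\alpha\in\T^d$ is a suitable rotation vector, and $\varepsilon(n)\to 0$ in a uniform-density sense. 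The equidistribution of $(q_1(n)\alpha,\ldots,q_r(n)\alpha)$ guaranteed by the rational independence of the $q_i$ forces the right-hand side to be close to any prescribed positive value along a syndetic set of $n$. If $R$ is a set of $Q$-recurrence for Kronecker systems, one can then realize the positivity on $R$, closing the implication.

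\textbf{From topological recurrence in Weyl systems to Kronecker recurrence: the main obstacle.} The direction (2)$\Rightarrow$(3) is the heart of the theorem. Arguing contrapositively, I would start from a Kronecker system $(Y,S)$ and a nonempty open $V\subseteq Y$ such that $V\cap S^{-q_1(n)}V\cap\cdots\cap S^{-q_r(n)}V=\emptyset$ for every $n\in R$, and construct a Weyl system $(X,T)$ and nonempty open $U\subseteq X$ with $U\cap T^{-p_1(n)}U\cap\cdots\cap T^{-p_r(n)}U=\emptyset$ for every $n\in R$. The construction will build $X$ as a product of unipotent affine transformations on tori designed so that, in a distinguished coordinate, $T^{p_i(n)}$ restricts to translation by $q_i(n)\alpha$, where $\alpha$ drives the failure on $Y$. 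The hard step is to check that the Weyl polynomials $q_1,\ldots,q_r$ \emph{saturate} the Weyl complexity of $P$: no additional polynomial constraint on the orbit of a tuple of iterates arises beyond those already captured by $Q$, so that $V$ genuinely lifts to an open set $U$ with no spurious recurrence forced by the auxiliary coordinates. Establishing this tight correspondence --- that the polynomials $Q$ encode exactly the topological obstruction to $P$-recurrence on Weyl systems --- is where I expect the bulk of the technical work to lie, via a careful orbit-closure analysis of polynomial iterates of unipotent affine transformations and a combinatorial argument showing that rational independence of the $q_i$ transfers from $(Y,S)$ to the constructed $(X,T)$.
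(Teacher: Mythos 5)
Your overall architecture matches the paper's: (1)$\Rightarrow$(2) is indeed just \cref{measure-implies-topological}; the polynomials $Q$ are a rationally independent integral basis of the space of Weyl polynomials $\ws(P)$; (2)$\Rightarrow$(3) is proved by evaluating $P$-topological recurrence on an explicitly constructed product of standard Weyl systems with prescribed rotation $\beta$ and reading off $\|q_i(n)\beta\|<C\epsilon$ from the identity $q_i(n)\beta_j=v^T\Lambda_d(n)(\beta_j,x_{j,1},\ldots,x_{j,d-1})^T$ (this direction is a short computation in the paper, not the ``heart'' of the argument, and the ``saturation'' worry you raise there is not needed for this implication --- it only matters for (3)$\Rightarrow$(1)); and (3)$\Rightarrow$(1) goes through the structure theorem \cref{standard-decomposition} after reducing to the connected case and then, via \cref{FKLemma}, to products of standard Weyl systems.

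However, your sketch of (3)$\Rightarrow$(1) has a genuine gap at its crux. Writing
\[
\mu\bigl(A\cap T^{-p_1(n)}A\cap\cdots\cap T^{-p_r(n)}A\bigr)\approx\sum_{l\le M}c_l\,e\bigl(q_{l,1}(n)\beta_1+\cdots+q_{l,s}(n)\beta_s\bigr)=:F\bigl(q_\bullet(n)\beta\bigr),
\]
the hypothesis that $R$ is a set of $Q$-recurrence for Kronecker systems only produces $n\in R$ for which all the frequencies $q_{l,j}(n)\beta_j$ are near $0$, so what you need is positivity of $F$ \emph{at the origin}, i.e.\ a lower bound $\sum_{l\le M}c_l\ge\delta/2>0$. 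Equidistribution of $(q_1(n)\alpha,\ldots,q_r(n)\alpha)$ controls the \emph{mean} of $F$ (which is positive by Bergelson--Leibman), not its value at $0$, and ``close to any prescribed positive value along a syndetic set'' does not intersect that syndetic set with $R$. The paper closes this gap with the IP machinery: by \cref{IPstarness} the set of $n$ with all $\|q_{l,j}(n)\beta_j\|$ small is IP$^*$, and the Bergelson--McCutcheon theorem (\cref{theo-1.3}) gives an IP-limit lower bound $\mu(A\cap T^{-p_1(n)}A\cap\cdots\cap T^{-p_r(n)}A)\ge c>0$ along a sub-IP-ring; combining the two (\cref{cor-IP-prop}) yields a single $n$ where the correlation is $\ge\delta$ while all frequencies are near $0$, which forces $\bigl|\sum_{l\le M}c_l\bigr|\ge\delta/2$ and then positivity for any $n\in R$ with small frequencies (\cref{sufficient-conditions}). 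Without an argument of this type your implication does not close. Two smaller omissions: you must handle disconnected Weyl systems (finite abelian components), which the paper does by passing to $X_0$, $T^l$ and the rescaled family $P'=\{p_i(ln)/l\}$ using \cref{some-properties-weyl-poly}; and you must control \emph{all} integral polynomials $q_{l,j}\in\ws_d(P)$ appearing in the expansion, not just the basis $Q$ --- this works because they are rational combinations of $q_1,\ldots,q_r$, but it should be said.
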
 
\begin{remark}
    When measurable and topological recurrence coincide—such as in Kronecker systems—we will refer to both simply as ``recurrence.''
\end{remark}
 If we regard a Weyl system as a nilsystem, its phase space $X$ can be realized as the quotient $G/\Gamma$, where $G$ is an $s$-step nilpotent Lie group and $\Gamma$ is a discrete co-compact subgroup of $G$. In this context, we refer to the Weyl system as an \textit{$s$-step Weyl system}, emphasizing its step of nilpotency as a nilsystem. The step of a Weyl system reflects its complexity, which naturally influences recurrence. This is illustrated by a result of Host, Kra, and Maass \cite[Theorem 5.13]{Host_Kra_Maass_variations_top_recurrence:2016}, which establishes that $l\leq s$ if and only if every set of $(n,\ldots,ln)$-recurrence in $s$-step Weyl systems is a set of $(n,\ldots,ln)$-recurrence in $(s+1)$-step Weyl systems. \cref{teo-B}, which we will presently formulate, provides a substantial improvement of this result, explaining this behavior for any polynomial recurrence scheme in Weyl systems. For this, we make use of \textit{the Weyl complexity} $W(P)$ of a set of integral polynomials $P$, a notion introduced by Bergelson, Leibman, and Lesigne \cite{Bergelson_Leibman_Lesigne08}. The Weyl complexity $W(P)$ is defined as the minimal $k\in \N$ such that for each Weyl system $(X,\mu,T)$ the \textit{ergodic averages along $P$} depend only on the $k$-step nilfactor of $(X,\mu,T)$ (see \cref{sec2} for more details). 

  \begin{thmx}\label{teo-B}
       Let $r\in \N$ and $P=\{p_1,\ldots,p_r\}$ a set of integral polynomials with zero constant term. Let $s\geq W(P)$. If $R\subseteq \N$ is a set of $P$-recurrence for $s$-step Weyl systems, then it is a set of $P$-recurrence for $(s+1)$-step Weyl systems.
  \end{thmx}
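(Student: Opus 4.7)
The plan is to reduce the recurrence question for an arbitrary $(s+1)$-step Weyl system to its $s$-step nilfactor, and then use the hypothesis $s \geq W(P)$ to argue that the newly-added layer cannot destroy the recurrence. Fix an $(s+1)$-step Weyl system $(Y,\nu,S)$ and a measurable set $A \subseteq Y$ with $\nu(A) > 0$; the goal is to produce $n \in R$ with $\nu\bigl(A \cap \bigcap_{i=1}^{r} S^{-p_i(n)}A\bigr) > 0$. Let $\pi\colon Y \to Z$ be the factor map onto the $s$-step nilfactor $(Z,\mu,T)$, which is itself an $s$-step Weyl system by the tower structure of Weyl systems. Set $g := \E(\mathbbm{1}_A \mid Z)$: this is non-negative with $\int_Z g \, d\mu = \nu(A) > 0$, so the superlevel set $B := \{z \in Z : g(z) \geq \delta\}$ has $\mu(B) > 0$ for a suitable $\delta > 0$.

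Applying the hypothesis on $(Z,\mu,T)$ to $B$, I would extract $n \in R$ with $\mu\bigl(B \cap \bigcap_i T^{-p_i(n)}B\bigr) > 0$, giving the factor-level positive correlation
\[
\int_{Z} g \cdot \prod_{i=1}^{r} g \circ T^{p_i(n)} \, d\mu \;\geq\; \delta^{r+1} \mu\!\left(B \cap \bigcap_{i=1}^{r} T^{-p_i(n)}B\right) \;>\; 0.
\]
The crux is to upgrade this to positivity of the full correlation $c_n := \int_Y \mathbbm{1}_A \prod_{i=1}^r \mathbbm{1}_A \circ S^{p_i(n)} \, d\nu$ in $Y$. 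Since $s \geq W(P)$, the definition of Weyl complexity asserts that replacing each $\mathbbm{1}_A$ by its conditional expectation $g$ does not alter the $L^2$-limit of the ergodic averages along $P$. Combining this with the structure theorem for polynomial multicorrelation sequences in Weyl systems (developed earlier in the paper), I would decompose $c_n = c'_n + e_n$, where $c'_n := \int_Z g \prod_i g \circ T^{p_i(n)} \, d\mu$ is the main term pulled back from $Z$ and $e_n$ is a finite sum of correlations attached to non-trivial characters of the top-layer fiber, each living in an auxiliary Weyl system of strictly smaller complexity.

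The hard part is controlling $e_n$ at the single $n \in R$ chosen above: a Fourier expansion along the $(s+1)$-step fiber writes $e_n$ as a superposition of polynomial-phase exponentials that, thanks to $s \geq W(P)$, average to zero but need not vanish pointwise, so the equality $c_n = c'_n$ cannot simply be asserted for our fixed $n$. The way around this is to use that each summand of $e_n$ is itself a multicorrelation attached to a simpler polynomial scheme on an auxiliary $s$-step Weyl system, to which the recurrence hypothesis on $R$ also applies. The plan is therefore to refine $B$ by intersecting it with finitely many sets designed so that every auxiliary correlation appearing in $e_n$ is simultaneously controlled, then invoke the hypothesis on this refined set to select $n \in R$ at which the main term $c'_n$ dominates the error. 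Once $c_n > 0$ is secured for this $n$, the conclusion $\nu\bigl(A \cap \bigcap_i S^{-p_i(n)}A\bigr) > 0$ follows, completing the argument.
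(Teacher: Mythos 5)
Your overall strategy --- conditioning on the $s$-step factor, finding a good $n\in R$ there, and then arguing that the top layer cannot destroy positivity --- is not the route the paper takes, and as written it has a genuine gap at exactly the point you flag as ``the hard part.'' The paper obtains \cref{teo-B} as an immediate corollary of the equivalence in \cref{Equivalences-Weyl} ($P$-recurrence for $d$-step Weyl systems $\Leftrightarrow$ $\ws_d(P)$-recurrence for Kronecker systems) together with the stabilization statement of \cref{Weyl-complexity-and-space}, namely $\ws_s(P)=\ws_{s+1}(P)$ for $s\ge W(P)$. That stabilization is where the hypothesis $s\ge W(P)$ actually does its work, and it appears nowhere in your argument: you invoke $s\ge W(P)$ only through the characteristic-factor property, which, as you yourself note, controls averages of the correlation sequence and says nothing about its value at the single $n$ you have selected.

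The concrete gap is in your plan for killing the error term $e_n$. By \cref{standard-decomposition}, the full correlation is (up to a small uniform error) a superposition $\sum_l c_l\, e(q_l(n)^T\beta)$ with $q_l\in\ws_{s+1}(P)$, so making $e_n$ small at some $n\in R$ amounts to finding $n\in R$ at which finitely many phases $q_{l,j}(n)\beta_j$ are simultaneously near $0$ --- that is, to $R$ being a set of $\ws_{s+1}(P)$-recurrence for Kronecker systems. Your hypothesis is only that $R$ is a set of $P$-recurrence for $s$-step Weyl systems; extracting Kronecker recurrence along the Weyl polynomials from this is the nontrivial implication ($\textbf{2.}\Rightarrow\textbf{3.}$) of \cref{Equivalences-Weyl}, proved via the explicit orbit computation in standard Weyl systems, and ``refining $B$ by finitely many auxiliary sets and re-invoking the hypothesis'' is not a substitute: the recurrence hypothesis yields only qualitative positivity of a measure, with no quantitative lower bound, so there is no way to guarantee that the main term dominates an error of unknown size. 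Relatedly, the paper does not run a main-term-versus-error-term comparison at all: the uniform lower bound $\bigl|\sum_{l=1}^{M} c_l\bigr|\ge\delta/2$ is obtained from the Bergelson--McCutcheon IP-limit theorem (\cref{theo-1.3} via \cref{cor-IP-prop}), and the chosen $n\in R$ then makes \emph{all} frequencies small simultaneously, as in \cref{sufficient-conditions}. To repair your argument you would need to import both of these ingredients, at which point you would essentially have reproduced the paper's proof of \cref{Equivalences-Weyl}.
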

The condition $s\geq W(P)$ in \cref{teo-B} is optimal, in the sense that whenever $s<W(P)$, there is a set of $P$-recurrence for $s$-step Weyl systems that is not a set of $P$-recurrence for $(s+1)$-step Weyl systems (see \cref{remark-theo-B}).

Given polynomials $\{p_1,\ldots,p_r\}\subseteq \Q[x]$ we say that they are \textit{essentially distinct} if, for every $i\ne j$, both $p_i$ and $p_i-p_j$ are non-constant. A \textit{polynomial multicorrelation sequence} is a sequence $$n\mapsto \int f_0 T^{p_1(n)}f_1\cdots T^{p_r(n)}f_r d\mu,$$ where $p_1,\ldots,p_r$ are essentially distinct integral polynomials, $(X,\mu,T)$ is a measure-preserving system, and $f_0,\ldots,f_r\in L^\infty(X)$.

We refer to the $\R$-vector space generated by the polynomials $\{q_1,\ldots,q_r\}$ appearing in \cref{teo-A} the \textit{Weyl polynomials} of $P$, and we denote it by $\ws(P)$. The proofs of \cref{teo-A} and \cref{teo-B} rely on the following technical result, which in rough terms states that every polynomial multicorrelation sequence in Weyl systems posses a ``polynomial Fourier expansion" involving only Weyl polynomials.

 \begin{thmx}[Structure theorem for polynomial multicorrelation sequences in totally ergodic Weyl systems]\label{teo-C}
Let $(X,\mu,T)$ be a totally ergodic Weyl systems with Kronecker factor given by the rotation $x\mapsto x+\beta$, where $\beta=(\beta_1,\ldots,\beta_s)\in \T^s$ for some $s\in \N$. Let $P=\{p_1,\ldots,p_r\}$ be a set of essentially distinct integral polynomials. Then for each $f_0,\ldots,f_r\in L^\infty(\mu)$ there are a sequence $(c_l)_{l\in \N} \in \ell^2(\N)$ and integral polynomials $\{q_{l,j}\}_{l\in \N,j\in [s]}\subseteq \ws(P)$ such that
    \begin{equation*}
        \liminf_{L\to \infty} \sup_{n\in \N} \left| \int_X f_0T^{p_1(n)}f_1 \cdots T^{p_r(n)}f_r d\mu -\sum_{l=1}^L c_l \cdot e\left(q_{l,1}(n)\beta_1+\cdots+q_{l,s}(n)\beta_s\right)\right |=0.
    \end{equation*}
\end{thmx}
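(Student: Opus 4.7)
My plan is to proceed through an explicit Fourier-analytic computation on the Weyl system. Since total ergodicity suppresses the finite abelian group factor, I realize $X$ as a torus $\T^d$ with $T$ a unipotent affine map, $Tx=Ax+b$, where $A-I$ is nilpotent. Then $T^nx=A^nx+S_nb$, where $A^n=\sum_j \binom{n}{j}(A-I)^j$ is a \emph{polynomial} in $n$ with matrix coefficients, and $S_n=\sum_{k=0}^{n-1}A^k$ is likewise polynomial in $n$. The key consequence is that for any character $e_m(x)=e(m\cdot x)$ on $\T^d$ one has $e_m(T^nx)=e_m(x)\cdot e(\varphi_m(n,x,\beta))$, where $\varphi_m$ is a polynomial in $n$ whose coefficients are $\Z$-linear combinations of the coordinates of $x$ and of the Kronecker parameters $\beta_1,\dots,\beta_s$.

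The first step is to Fourier-expand each $f_i=\sum_{m_i}\hat f_i(m_i)e_{m_i}$, multiply out the product $f_0\prod_{i=1}^r T^{p_i(n)}f_i$, and integrate over $x$. Orthogonality of characters forces the $x$-dependent contributions to cancel, imposing a ``resonance condition'' on each tuple $(m_0,\dots,m_r)$. The surviving terms take the form $\prod_i \hat f_i(m_i)\cdot e(\Psi_{\vec m}(n))$ with $\Psi_{\vec m}(n)=\sum_{j=1}^s q_{\vec m,j}(n)\beta_j$, where each $q_{\vec m,j}$ is an integer-coefficient polynomial built from the substituted $p_i(n)$ via binomial-type combinations. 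Summing over resonant tuples gives the formal expansion, and square-summability $(c_l)\in\ell^2(\N)$ follows from Cauchy--Schwarz together with Parseval applied to $f_0,\dots,f_r$.

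The next step is to identify the polynomials $q_{\vec m,j}$ as elements of $\ws(P)$. Here I would appeal to the construction of the polynomials $q_1,\dots,q_r$ from \cref{teo-A}: their $\R$-span is designed precisely to capture all polynomial combinations of $p_1,\dots,p_r$ that arise from reading off characters along a unipotent affine orbit. The identification is a matter of careful bookkeeping, verifying that every polynomial of the form $\sum_i c_i\binom{p_i(n)}{k}$, and the more general cross-terms coming from the product expansion, lies in the prescribed $\R$-span.

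The most delicate point is the $\liminf_L\sup_n$ conclusion. Since $(c_l)\in\ell^2(\N)$ need not be in $\ell^1$, the series need not converge uniformly in $n$ in the naive ordering. However, the multicorrelation sequence $n\mapsto\int f_0\prod T^{p_i(n)}f_i\,d\mu$ arises from evaluating a continuous function along the orbit closure of a polynomial sequence on a compact nilmanifold; on Weyl systems such continuous functions admit uniform approximation by finite linear combinations of exponentials of the form $e(\sum_j q_j(n)\beta_j)$ with $q_j\in\ws(P)$. For each $\varepsilon>0$ this produces a finite truncation $\sum_{l\leq L_\varepsilon}c_l\,e(\cdots)$ approximating the multicorrelation sequence uniformly in $n$ to within $\varepsilon$; arranging these truncations along an increasing sequence $L_k\to\infty$ yields the desired liminf. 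The principal obstacle is reconciling these two sources of information: the Parseval computation gives the explicit polynomials $q_{\vec m,j}$ and their $\ell^2$ coefficients, while the nilmanifold-based Weierstrass approximation gives uniform-in-$n$ control; one must either invoke a uniform-approximation theorem for multicorrelation sequences on Weyl systems or carefully group the Fourier terms along a subsequence so that the $\ell^2$ coefficients assemble into uniform partial sums along $L_k$.
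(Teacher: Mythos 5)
Your overall strategy---Fourier-expanding the $f_i$ on a torus realization of the system, computing the character integrals along the polynomial orbit, and keeping only the resonant tuples---is the same as the paper's, which reduces via \cref{FKLemma} to a product of standard Weyl systems and then applies the explicit computation of \cref{Weyl-poly-in-standard}. The genuine gap is in your final step, the $\liminf_L\sup_n$ conclusion, which you correctly identify as the delicate point but do not resolve. The route you propose---uniform approximation of the multicorrelation sequence by finite exponential sums because it ``arises from evaluating a continuous function along the orbit closure''---is circular: that uniform approximation statement is essentially the theorem being proved, it is not available off the shelf for merely $L^\infty$ (non-continuous) functions, and even granting it you would still face the reconciliation problem you flag yourself, namely that such approximants need not be partial sums of your Parseval expansion.

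The fix is more elementary than either alternative you suggest, and it is your second one (``carefully group the Fourier terms'') carried out at the level of the functions rather than of the series: truncate each $f_k$ to $f_k^{(L)}=\sum_{m\in[-L,L]^d}\hat f_k(m)\,e_m$ and telescope. Since $T$ preserves $\mu$ and the remaining factors are bounded in $L^\infty$, Cauchy--Schwarz gives
\begin{equation*}
\Bigl\lvert \int f_0\prod_{i} T^{p_i(n)}f_i\,d\mu-\int f_0^{(L)}\prod_{i} T^{p_i(n)}f_i^{(L)}\,d\mu\Bigr\rvert\le \sum_{k=0}^r \bigl\lVert f_k-f_k^{(L)}\bigr\rVert_{L^2}\prod_{j\ne k}\lVert f_j\rVert_{\infty},
\end{equation*}
a bound that is uniform in $n$ and tends to $0$ as $L\to\infty$. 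The truncated integral is a finite sum of resonant terms, each equal to $c_{\vec m}\, e\bigl(\sum_{j}q_{\vec m,j}(n)\beta_j\bigr)$ with $q_{\vec m,j}\in\ws(P)$ by the character computation, and Parseval bounds the $\ell^2$ norm of all coefficients by $\prod_k\lVert f_k\rVert_2$; enumerating the resonant tuples so that the partial sums at a suitable subsequence coincide with these truncations yields exactly the stated $\liminf$. One smaller point: working with a general unipotent affine $T$ on $\T^d$, as you propose, makes the identification of the resonant polynomials with elements of $\ws(P)$ (which is defined through the matrices $\Lambda_k$ built from the standard form) a nontrivial piece of the ``bookkeeping''; the paper sidesteps this by passing to an extension that is a product of standard Weyl systems, at the cost of checking that the resulting change of Kronecker rotation is by an invertible rational matrix and hence harmless.
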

Two real sequences $(a_n)_{n\in \N}$ and $(b_n)_{n\in \N}$ are \textit{asymptotically uncorrelated} if $$\lim_{N\to \infty} \frac{1}{N}\sum_{n=1}^N a_nb_n=\lim_{N\to \infty}\frac{1}{N}\sum_{n=1}^Na_n  \cdot \lim_{N\to \infty}\frac{1}{N}\sum_{n=1}^Nb_n $$ whenever these limits exist. As a corollary of \cref{teo-C} we show that the polynomial multicorrelation sequences in totally ergodic Weyl systems for sets of polynomials sharing no nontrivial Weyl polynomial are asymptotically uncorrelated. 
\begin{corx}\label{teo-D}
Let $P=\{p_1,\ldots,p_r\}$ and $Q=\{q_1,\ldots,q_l\}$ be sets of essentially distinct integral polynomials. Then $$\ws(P)\cap \ws(Q)=\{0\}$$ if and only if, for every totally ergodic Weyl systems $(X,\mu,T)$ and $(Y,\nu,S)$, and functions $f_0,\ldots,f_r\in L^\infty(X)$ and $g_0,\ldots,g_l\in L^\infty(Y)$, the sequences
$$ n\mapsto  \int f_0T^{p_1(n)} f_1  \cdots  T^{p_r(n)} f_{r} d\mu, \text{ and } n\mapsto \int g_0S^{q_1(n)} g_1  \cdots  S^{q_l(n)} g_{l} d\nu $$
are asymptotically uncorrelated.
\end{corx}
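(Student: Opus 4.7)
The plan is to use the structure theorem (\cref{teo-C}) to reduce the question about Cesaro averages of products of multicorrelation sequences to an equidistribution computation in which the hypothesis $\ws(P) \cap \ws(Q) = \{0\}$ controls which terms survive. Without loss of generality we may assume the polynomials in $P, Q$ have zero constant term (by absorbing $T^{p_i(0)}$ into $f_i$, and analogously for $Q$), in which case $\ws(P)$ and $\ws(Q)$ can be realized as $\R$-spans of bases of integral polynomials with zero constant term.

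For the forward implication, fix $\varepsilon > 0$ and apply \cref{teo-C} to obtain sup-norm approximations $L_n := \sum_{l=1}^{L} c_l\, e(R_l(n))$ for $A_n$ and $M_n := \sum_{l'=1}^{L'} c'_{l'}\, e(R'_{l'}(n))$ for $B_n$, where $R_l(n) = \sum_{j=1}^s q^P_{l,j}(n)\beta_j$ with $q^P_{l,j} \in \ws(P)$ integral, and $R'_{l'}(n) = \sum_j q^Q_{l',j}(n)\gamma_j$ with $q^Q_{l',j} \in \ws(Q)$. Expanding the product and passing to Cesaro averages reduces the problem to evaluating $\lim_{N\to\infty} \frac{1}{N}\sum_{n=1}^{N} e(R_l(n) + R'_{l'}(n))$ for each $l, l'$; by Weyl's equidistribution theorem, this limit vanishes unless every non-constant coefficient of $R_l + R'_{l'}$ (as a polynomial in $n$) is rational.

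The algebraic heart is a separation lemma: under $\ws(P) \cap \ws(Q) = \{0\}$,
\[
\Q[n] \cap \bigl(\ws(P) \oplus \ws(Q)\bigr) = \bigl(\ws(P) \cap \Q[n]\bigr) \oplus \bigl(\ws(Q) \cap \Q[n]\bigr).
\]
This holds because integral polynomials have rational coefficients, so $\Q$-linearly independent integral-polynomial bases of $\ws(P)$ and $\ws(Q)$ remain $\Q$-linearly independent together in $\Q[n]$, and the unique decomposition in $\R[n]$ of a polynomial lying in $\Q[n]$ matches the one in $\Q[n]$. Since $R_l, R'_{l'}$ have zero constant term, rationality of their non-constant coefficients is equivalent to $R_l + R'_{l'} \in \Q[n]$; applying the lemma, $R_l, R'_{l'} \in \Q[n]$ individually. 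The total ergodicity of $X$ gives rational independence of $\beta_1, \ldots, \beta_s, 1$, and each $[n^k] q^P_{l,j}$ is rational, so $[n^k] R_l = \sum_j [n^k] q^P_{l,j}\,\beta_j \in \Q$ forces $[n^k] q^P_{l,j} = 0$ for all $j$ and all $k \geq 1$; combined with zero constant term this gives $q^P_{l,j} \equiv 0$, whence $R_l \equiv 0$, and symmetrically $R'_{l'} \equiv 0$. Only terms with $R_l \equiv R'_{l'} \equiv 0$ contribute, producing the factorization $\lim_{N\to\infty} \frac{1}{N}\sum_n L_n M_n = \bigl(\lim_{N\to\infty} \frac{1}{N}\sum_n L_n\bigr)\bigl(\lim_{N\to\infty}\frac{1}{N}\sum_n M_n\bigr)$, and sending $L, L' \to \infty$ and $\varepsilon \to 0$ transfers asymptotic uncorrelatedness to $A_n, B_n$.

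For the converse, take any $0 \neq q \in \ws(P) \cap \ws(Q)$ (necessarily non-constant) and construct a totally ergodic Weyl system $X = Y$ together with functions so that $A_n$ and $B_n$ are approximated by $e(q(n)\beta)$ and $e(-q(n)\beta)$ respectively for some irrational Kronecker frequency $\beta$; then $A_n B_n \equiv 1$ has Cesaro average $1$ while each factor has average $0$ (since $q(n)\beta$ equidistributes), contradicting asymptotic uncorrelation. The principal obstacle I anticipate is this realization step: identifying each non-zero element of $\ws(P) \cap \ws(Q)$ with a frequency arising in a concrete correlation sequence requires a converse-type statement to \cref{teo-C} asserting that every suitable integral element of $\ws(P)$ actually appears as an exponential frequency in some $P$-correlation in some totally ergodic Weyl system. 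This should be extractable from the construction underlying \cref{teo-A}, but it is the most delicate point; the forward argument's separation lemma is purely linear-algebraic once the bases and zero-constant-term normalization are in place.
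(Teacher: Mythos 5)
Your forward direction follows essentially the same route the paper takes: the paper obtains the corollary as a direct consequence of \cref{standard-decomposition} together with \cref{orthogonality-in-Weyl}, and your ``separation lemma'' plus the rational-independence argument for the Kronecker frequencies is exactly the computation carried out inside the proof of \cref{orthogonality-in-Weyl} (where one checks that $h(n)\alpha+\sum_j q_{l,j}(n)\beta_j$ must have an irrational non-constant coefficient). Your variant, which expands \emph{both} sequences via \cref{teo-C} and treats the cross frequencies $R_l+R'_{l'}$ symmetrically, is correct and more self-contained, at the mild cost of using total ergodicity of both systems; the paper's asymmetric route (expand one sequence, apply \cref{orthogonality-in-Weyl} to the other) is what yields the remark that only one system need be totally ergodic. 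For the converse, the ``converse-type statement to \cref{teo-C}'' that you correctly identify as the crux does not need to be extracted from the proof of \cref{teo-A}: it is precisely \cref{Weyl-Poly-alternative-def}, whose proof explicitly constructs, for any $h\in\ws_d(P)$, characters $\psi_{v^0},\dots,\psi_{v^r}$ on a standard Weyl system with $\int\psi_{v^0}T^{p_1(n)}\psi_{v^1}\cdots T^{p_r(n)}\psi_{v^r}\,d\mu=e(-h(n)\alpha)$ for every $n$. Taking a nonzero integral $q\in\ws(P)\cap\ws(Q)$ (which exists because both spaces are spanned by integral polynomials, so the intersection is a rational subspace) and running that construction for $P$ with $q$ and for $Q$ with $-q$ over the same irrational $\alpha$ completes your sketch. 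Two small points to tighten: (i) your ``WLOG zero constant term'' reduction replaces $P$ by $\{p_i-p_i(0)\}$, which can in principle change $\ws(P)$ (already $\ws_1(P)$ is the literal $\R$-span of $P$), so you should either avoid the reduction or justify that the condition $\ws(P)\cap\ws(Q)=\{0\}$ is unaffected; (ii) in the converse you need $q$ non-constant for $e(q(n)\alpha)$ to have zero Ces\`aro mean --- automatic once constant terms vanish, but worth stating given (i).
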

\begin{remark}
    It follows from our proof that actually in \cref{teo-D} only one of the systems $(X,\mu,T)$ and $(Y,\nu,S)$ needs to be totally ergodic.
\end{remark}
We conjecture that \cref{teo-D} holds for every totally ergodic measure-preserving systems $(X,\mu,T)$ and $(Y,\nu,S)$. This is supported by known results, such as \cite[Proposition 3.2]{Frantzikinakis_Lesigne_Wierdl_sets_k-recurrence:2006} and \cite[Proposition 5.1]{Frantzikinakis08}. Furthermore, we show that this is true conditional on \cite[Conjecture 11.4]{Leibman10b} from Leibman. This would also provide a positive answer to \cref{q4} (ii).

The paper is organized as follows: In \cref{sec2}, we introduce the basic notions in dynamical systems, ergodic theory, recurrence, and structure theory that will be used throughout the article. In \cref{sec3}, we introduce an equivalent definition of the \textit{Weyl polynomials}. We provide a formula of polynomial multicorrelation sequences in Weyl systems in terms of the Weyl polynomials, obtaining \cref{teo-C} which is later used to prove \cref{teo-D}. In \cref{sec4}, we first establish sufficient conditions for polynomial recurrence in \textit{standard Weyl systems}. Standard Weyl systems are a subclass of Weyl systems satisfying the property that every connected Weyl systems is a factor of a standard Weyl system (see \cref{sec2} for more details). Using these conditions, we derive necessary and sufficient conditions for measurable and topological polynomial recurrence in $s$-step Weyl systems through polynomial recurrence in Kronecker systems. This provides a comprehensive understanding on how polynomial schemes of recurrence behave in Weyl systems, leading to \cref{teo-A} and \cref{teo-B}. Finally, in \cref{sec5} we discuss the aforementioned conjecture from Leibman. We show that assuming Leibman's conjecture yields a positive answer to \cref{q4} (ii).

\subsection{Notation}
We will use the notation $e(x)=e^{2\pi  \mathbf{i} x}$ for $x\in \T$. For $d\in \N$ and a subspace $V\subseteq \R^d$, we denote as $V^\perp$ the orthocomplement of $V$ for the usual euclidean inner product. For a transformation $T:X\to X$ and a function $f:X\to \R$ we use the notation $Tf=f\circ T$. Given a polynomial $p:\Z\to \Z$ and $k\in \N$ we write
$$p^{[k]}(n) ={ p(n) \choose k},~ \text{ for each } n\in \N.$$
Borrowing notation from \cite{bergelson:hal-00017730}, for $d,k\in \N$ and a matrix function $M:\R\to \R^{d\times k}$ we define the span of the matrix $M$ as the span of the columns, namely if $M=(M_{1},\ldots,M_k)$ where $M_i$ is the $i$-th column of $M$, then
$$\Rspa(M) =\left\{\sum_{i=1}^k \alpha_iM_i(x) \mid x\in \R,  \alpha_1,\ldots,\alpha_k\in \R\right\}. $$
\subsection*{Acknowledgements}
I thank my co-advisor Vitaly Bergelson for introducing me to this topic and for his continued guidance throughout. I am also grateful to my advisor Florian Richter for his numerous insightful comments on this article. We additionally thank Ethan Ackelsberg for many helpful remarks on an earlier draft. Finally, we thank the anonymous reviewer for numerous helpful comments.

\section{Preliminaries}\label{sec2}
\subsection{Basic notions}\label{basic-notions}
In this paper, we consider a \textit{measure preserving system} $(X,\sB,\mu,T)$ where $(X,\sB,\mu)$ is probability space that is regular---meaning that $X$ is a compact metric space and $\sB$ is the Borel sigma algebra associated to $X$---and $T:X\to X$ is a measurable and measure preserving map. Since $\sB$ always represents the Borel sigma algebra of $X$, we will often omit it and denote the system simply as $(X,\mu,T)$. A system $(X,\mu,T)$ is said to be \textit{ergodic} if for every $A\in \sB$, $\mu(A\Delta T^{-1}A)=0$ implies $\mu(A)\in \{0,1\}$, and \textit{totally ergodic} if $(X,\mu,T^n)$ is ergodic for each $n\in \N$. For two measure-preserving systems $(X,\mu,T)$ and $(Y,\nu,S)$, we say that $(Y,\nu,T)$ is a factor of $(X,\mu,T)$ if there is a measurable map $\pi:X'\to Y'$ where $X'\subseteq X$ and $Y'\subseteq Y$ are $T$ and $S$ invariant sets with full measure respectively, such that $\pi\circ T(x)= S\circ \pi(x)$ for every $x\in X'$ and $\pi\nu=\mu$. If $\pi $ is injective, we say that $X$ and $Y$ are \textit{isomorphic}. 

A pair $(X,T)$ is called a \textit{topological dynamical system} if $X$ is a compact metric space and $T:X\to X$ is a homeomorphism. We say that the system $(X,T)$ is \textit{minimal} if every closed $T$-invariant subset of $X$ is either $X$ or $\emptyset$. For two topological dynamical systems $(X,T)$ and $(Y,S)$, we say that $(Y,T)$ is a (topological) factor of $(X,T)$ if there is an onto continuous map $\pi:X\to Y$ such that $\pi\circ T=S\circ \pi $. If $\pi $ is injective, we say that $X$ and $Y$ are \textit{conjugate}. When we have a measure-preserving system $(X,\mu,T)$ satisfying that $(X,T)$ is a topological dynamical system, we will refer to the triple $(X,\mu,T)$ simply as a \textit{system}, to emphasize that it is regarded simultaneously in both the measure-theoretic and topological settings.

We define several classical families of systems relevant to our work. Following the terminology from Furstenberg \cite{Furstenberg67}, a \textit{Kronecker system} is an ergodic system $(X,\mu,T)$ where $X$ is a compact abelian group, $\mu$ is its Haar measure, and $T$ is given by $T(x)=\tau x$ for every $x\in X$ and some fixed $\tau\in X$. Up to isomorphism, an ergodic measure-preserving system \((X, \mu, T)\) is a Kronecker system if the eigenfunctions of \(T\) in \(L^2(X)\) span \(L^2(X)\) (for a proof of this fact see for instance \cite{Walters82}). A \textit{Weyl system} $(X,\mu,T)$ is an ergodic system where 
$X$ is a compact abelian Lie group (which is the product of a finite dimensional torus and a finite abelian group, see \cite[Proposition 2.42 (i)]{HofmannMorris20}) and $T$ is a unipotent affine transformation, meaning that $T(x)=a\cdot U(x)$, where $a\in X$ and $U$ is a group automorphism of $X$ whose restriction to the connected component of the identity in $X$ (i.e. to the torus component of $X$) is a unipotent matrix. An \textit{$s$-step nilsystem} is a system $(Y, \mathcal{D}, \nu, S)$ where $Y$ is a homogeneous space $G / \Gamma$ of an $s$-step nilpotent Lie group $G$ and $\Gamma$ is a discrete cocompact subgroup of $G$, and there is an element $g \in G$ such that $S(x \Gamma)=g x \Gamma$ for each $x \in G$. Let $G_2=[G, G]$ and $G_l=\left[G_{l-1}, G\right]$ for $l \geq 2$. Up to isomorphism, one can alternatively define a Weyl system as an ergodic nilsystem $(X=G/\Gamma,\mu,T)$ such that the connected component $G_0$ of the identity $e_G$ in $G$ is abelian (Cf. \cite[Proposition 19 in Chapter 11]{Host_Kra_nilpotent_structures_ergodic_theory:2018}).

Every Kronecker system is a Weyl system, but not the other way around, as the following example shows.
\begin{ex}
Define the system $(\T^2,m_{\T^2},T)$ where $m_{\T^2}$ is the Haar measure on $\T^2$ and $T$ is given by $$T\begin{pmatrix}
    u\\v
\end{pmatrix}=\begin{pmatrix}
    1 &0 \\
    1 & 1
\end{pmatrix} \begin{pmatrix}
    u\\v
\end{pmatrix} + \begin{pmatrix}
    \alpha\\0
\end{pmatrix}, \text{ for }\begin{pmatrix}
    u\\v
\end{pmatrix} \in \T^2$$ where $\alpha\in \R\setminus\Q$. Since one can write $T(x)=Ax+a$ where $a\in \T^2$ and $A$ is a unipotent matrix (as $A-I$ is nilpotent), we have that $(\T^2,m_{\T^2},T)$ is a Weyl system. Moreover, $(\T^2,m_{\T^2},T)$ is a Weyl system that is not a Kronecker system, since the space generated by all the eigenfunctions of $T$ consists of functions $f:\T^2\to \C$ not depending on the second coordinate.
\end{ex}
Likewise, every Weyl system is an ergodic nilsystem. However, not every ergodic nilsystem is isomorphic to a Weyl system.
     \begin{ex}\label{ex-2.2}
     Let
\[
G = \left\{ \begin{pmatrix}
1 & x & z \\
0 & 1 & y \\
0 & 0 & 1
\end{pmatrix} \mid x, y, z \in \R \right\}
\]
be the group of $3\times 3$ upper triangular matrices with $1$s in the diagonal, with the usual matrix multiplication. Then \( (G, \cdot) \) is a Lie group, known as the Heisenberg group. A Heisenberg system is given by the triplet \((X=G / \Gamma, m_X, T)\), where \(\Gamma\) is a discrete subgroup of \(G\) defined by
    \[
    \Gamma = \left\{ \begin{pmatrix}
    1 & \gamma_1 & \gamma_3 \\
    0 & 1 & \gamma_2 \\
    0 & 0 & 1
    \end{pmatrix} \mid \gamma_1, \gamma_2, \gamma_3 \in \Z \right\},
    \]
   \( m_X \) denotes the Haar measure on the quotient space \( G / \Gamma \), and \( T \) represents the transformation induced by left multiplication by the element
   $$\tau =  \begin{pmatrix}
1 & a & 0 \\
0 & 1 & b \\
0 & 0 & 1
\end{pmatrix}$$
where $a,b\in \
\R$ are such that $\{1,a,b\}$ are rationally independent. This system is an example of a nilsystem that is not a Weyl system. Indeed, we follow Example 0.20 (2) from \cite{Bergelson_Leibman07}. Assume for contradiction that $(X,\mu,T)$ is a Weyl system. Consider the function $f(x)=e(x_{1,3})$ for $$x=  \begin{pmatrix}
1 & x_{1,2} & x_{1,3}\\
0 & 1 & x_{2,3} \\
0 & 0 & 1
\end{pmatrix}\Gamma\in X,$$ 
where $x_{1,2},x_{1,3},x_{2,3}\in [0,1)$. Let $\chi$ be a character on $X$, viewed as a Weyl system. One can show that for any given $n\in \N$, $\chi(\tau^n\Gamma)=e^{2\pi i p(n)}$ for some polynomial $p$. On the other hand, notice that 
$$ \tau^n\Gamma=   \begin{pmatrix}
1 & an & \frac{n(n-1)ab}{2} \\
0 & 1 & bn \\
0 & 0 & 1
\end{pmatrix}\Gamma =\begin{pmatrix}
1 & \{an\} & \{\frac{n(n-1)ab}{2}-an[bn]\} \\
0 & 1 & \{bn\} \\
0 & 0 & 1
\end{pmatrix}\Gamma . $$
As the Kronecker factor of $(X,\mu,T) $ corresponds to the ergodic rotation $(x,y)\mapsto (x+a,y+b)$, by \cite[Theorem 11 in Chapter 11]{Host_Kra_Maass_variations_top_recurrence:2016} the transformation $T$ is uniquely ergodic. Consequently, we have that 
$$\int f \chi d\mu= \lim_{N\to \infty} \frac{1}{N} \sum_{n=1}^N f(T^n\Gamma) \overline{\chi(T^n\Gamma)}= \lim_{N\to \infty} \frac{1}{N}\sum_{n=1}^N e\left(\frac{n(n-1)ab}{2}-an[bn]-p(n)\right)=0, $$
where - as mentioned in as mentioned in \cite[Remark 0.20 (2)]{Bergelson_Leibman07} - one can check that $$\left(\frac{n(n-1)ab}{2}-an[bn]-p(n)\right)_{n\in \N }$$ is uniformly distributed mod $1$ following the method in \cite[Section 3.6]{Bergelson_Leibman07}. In consequence, $f$ is orthogonal to $\chi$, and as $\chi$ was arbitrary, we conclude that $f$ is orthogonal to all characters of $X$. Since $L^2(X)$ is generated by the characters of $X$ by the Peter-Weyl theorem, this is a contradiction. Therefore, $X$ cannot be isomorphic to a Weyl system.
\end{ex}

The following theorem from Frantzikinakis and Kra characterizes Weyl systems when the associated nilmanifold is connected.
\begin{theo}[{\cite[Cf. Proposition 3.1]{Frantzikinakis_Kra_averages_product_integrals:2005}}]\label{F-K}
    Let $X=G / \Gamma$ be a connected $k$-step nilmanifold such that $G_0$ is abelian. Then every nilrotation $T_a(x)=$ ax defined on $X$ with the Haar measure $m$ is conjugate to a $k$-step nilpotent affine transformation on some finite dimensional torus. 
\end{theo}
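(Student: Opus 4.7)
The plan is to exploit the hypothesis that $G_0$ is abelian together with the connectedness of $X$. Since $X = G/\Gamma$ is connected and $\Gamma$ is discrete, the identity component of $X$ equals $X$ itself, which forces $G = G_0\Gamma$. Hence every coset in $G/\Gamma$ has a representative in $G_0$, and the map $G_0 \to X$ sending $g_0 \mapsto g_0\Gamma$ is surjective with kernel $\Lambda := G_0 \cap \Gamma$. Because $\Lambda$ is discrete in $G_0$ and $X$ is compact, $G_0/\Lambda$ is a compact connected abelian Lie group, and thus a finite-dimensional torus $\T^d$ for some $d\in\N$; this provides the identification $X \cong \T^d$ under which Haar measure on $X$ corresponds to Haar measure on $\T^d$.

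Next, I would translate the nilrotation through this identification. Writing $a = a_0\gamma$ with $a_0\in G_0$ and $\gamma\in\Gamma$, for any $g_0 \in G_0$ one has
$$T_a(g_0\Gamma) = ag_0\Gamma = a_0(\gamma g_0 \gamma^{-1})\Gamma = a_0\cdot c_\gamma(g_0)\cdot \Gamma,$$
where $c_\gamma(g_0) := \gamma g_0 \gamma^{-1}$. Since $G_0$ is the connected component of the identity it is normal in $G$, so $c_\gamma$ restricts to a continuous group automorphism of $G_0$ preserving $\Lambda$; it therefore descends to a group automorphism $\bar c_\gamma$ of $\T^d \cong G_0/\Lambda$. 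Thus under the identification $X\cong \T^d$, the map $T_a$ becomes the affine transformation $x \mapsto a_0\Lambda + \bar c_\gamma(x)$ on the torus.

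The remaining task is to show that $\bar c_\gamma$ is unipotent of step at most $k$, i.e. that the induced linear map on the universal cover $\R^d$ of $\T^d$ satisfies $(\bar c_\gamma - I)^k = 0$. For this I would iteratively apply the commutator identity $c_\gamma(g_0) = g_0 \cdot [\gamma,g_0]$, noting that $[\gamma, g_0]$ lies in $G_2 \cap G_0$. Intersecting the lower central series $G \supseteq G_2 \supseteq \cdots \supseteq G_k \supseteq G_{k+1} = \{e\}$ with $G_0$ yields a descending filtration by closed subgroups of $G_0$, each preserved by $c_\gamma$, with the property that $c_\gamma - I$ maps each step into the next. Iterating $k$ times forces $(c_\gamma - I)^k$ to land in $G_{k+1}\cap G_0 = \{e\}$, and passing to the Lie algebra $\R^d$ (where $c_\gamma$ corresponds to an $\R$-linear endomorphism through the exponential map of the abelian group $G_0$) turns this into the matrix identity $(\bar c_\gamma - I)^k = 0$.

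The main obstacle is precisely this last step: justifying that commutation with $\gamma$ lowers the restricted lower central filtration of $G_0$ by one level, when the multiplication in $G$ is nonabelian even though $G_0$ itself is abelian. Once this algebraic bookkeeping is handled, the conclusion is immediate. The conjugation of $T_a$ through the isomorphism $X \cong \T^d$ is an affine map whose linear part is unipotent of step $k$ on a finite-dimensional torus, and ergodicity is automatically transferred by the conjugacy.
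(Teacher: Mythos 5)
The paper offers no proof of this statement---it is quoted from Frantzikinakis and Kra \cite{Frantzikinakis_Kra_averages_product_integrals:2005}---and your argument is essentially the standard one from that reference (identify $X$ with the torus $G_0/(G_0\cap\Gamma)$ and check the nilrotation becomes affine with unipotent linear part); it is correct. The step you flag as the main obstacle is in fact immediate: $G_0$ is normal and abelian, so for $g_0\in G_j\cap G_0$ one has $(c_\gamma-\mathrm{id})(g_0)=\gamma g_0\gamma^{-1}g_0^{-1}=[\gamma,g_0]\in[G,G_j]\cap G_0=G_{j+1}\cap G_0$, and since $c_\gamma-\mathrm{id}$ is an endomorphism of the abelian group $G_0$ this iterates to $(c_\gamma-\mathrm{id})^k(G_0)\subseteq G_{k+1}\cap G_0=\{e\}$; the lift $A\in\mathrm{GL}_d(\Z)$ of $\bar c_\gamma$ to the universal cover then satisfies $(A-I)^k(\R^d)\subseteq\Z^d$, hence $(A-I)^k=0$ by linearity.
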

The next property links connectedness to total ergodicity in ergodic nilsystems.
 \begin{prop}[{\cite[Corollary 7 in Chapter 11]{Host_Kra_nilpotent_structures_ergodic_theory:2018}}]\label{connected}
     For an ergodic nilsystem $(X=G/\Gamma,\mu,T)$, the following are equivalent: 
     \begin{enumerate}
         \item The system is totally ergodic.
         \item The space $X$ is connected.
         \item We have $G=G_0\Gamma$.
     \end{enumerate}
 \end{prop}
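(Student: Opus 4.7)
The plan is to establish the equivalences through $(2) \Leftrightarrow (3)$ and $(2) \Leftrightarrow (1)$. The first equivalence is purely topological: since $G$ is a Lie group, the identity component $G_0$ is open in $G$, so $G/(G_0\Gamma)$ is discrete, and the compactness of $X = G/\Gamma$ forces this quotient to be finite. The connected components of $X$ are then the cosets of $G_0\Gamma/\Gamma$, and in particular $X$ is connected if and only if $G = G_0\Gamma$, which is exactly $(2) \Leftrightarrow (3)$ and needs no dynamics.

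For $(1) \Rightarrow (2)$ I would argue by contraposition. Suppose $X$ has $m \geq 2$ connected components. Since $T$ is a homeomorphism, it permutes these components, and ergodicity forces the permutation to be transitive---otherwise the union of one orbit would be a proper clopen $T$-invariant set of positive measure, which is forbidden. After relabeling, $T$ therefore cyclically shifts the $m$ components, and $T^m$ preserves each one. Any individual component is then a $T^m$-invariant set of measure $1/m$, so $T^m$ is not ergodic and total ergodicity fails.

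The implication $(2) \Rightarrow (1)$ is the substantive part. Writing $T = T_g$ for left translation by $g \in G$, the Kronecker factor of $(X,\mu,T)$ is the rotation by $\bar g = gG_2\Gamma$ on the compact abelian Lie group $Z := G/(G_2\Gamma)$. If $X$ is connected, then so is its continuous image $Z$, and a connected compact abelian Lie group is a finite-dimensional torus $\T^d$. The plan is then to invoke two facts. First, an ergodic rotation on $\T^d$ is automatically totally ergodic: the $\Q$-linear independence of $\{1, \bar g_1, \ldots, \bar g_d\}$ transfers to $\{1, n\bar g_1, \ldots, n\bar g_d\}$ for every $n \in \N$, since any rational relation among the latter would yield one among the former. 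Second, for a nilsystem, ergodicity of $T^n$ on $X$ is equivalent to ergodicity of the induced rotation $T_{\bar g}^n$ on the Kronecker factor $Z$. The main obstacle is this second fact, which I would import as a consequence of Parry's theorem on minimality and unique ergodicity of ergodic nilsystems (see for instance \cite{Host_Kra_nilpotent_structures_ergodic_theory:2018}), as it reduces ergodicity of $T_h$ on $G/\Gamma$ to denseness of the $h$-orbit in $G/(G_2\Gamma)$. Combining the two facts gives ergodicity of $T^n$ on $X$ for every $n \in \N$, which is total ergodicity.
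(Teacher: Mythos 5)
The paper does not actually prove this proposition---it is imported verbatim as Corollary 7 in Chapter 11 of the cited Host--Kra book---so there is no in-paper argument to compare against. Judged on its own terms, your proof is correct and is essentially the standard one. The purely topological equivalence $(2)\Leftrightarrow(3)$ is right: $G_0$ is open, so $G_0\Gamma/\Gamma$ is a clopen connected subset of $X$ and hence the identity component, and compactness gives finitely many components. The contrapositive argument for $(1)\Rightarrow(2)$ is also fine: ergodicity forces $T$ to permute the finitely many components transitively, each then has measure $1/m$, and $T^m$ fixes each, killing total ergodicity. For $(2)\Rightarrow(1)$ you correctly identify that the entire weight of the statement rests on the Green--Parry criterion that a nilrotation $T_h$ on a connected $G/\Gamma$ is ergodic if and only if the induced rotation on $G/(G_2\Gamma)$ is ergodic; granting that, the reduction to total ergodicity of an ergodic rotation on a torus (via rational independence of $\{1,\bar g_1,\dots,\bar g_d\}$ passing to $\{1,n\bar g_1,\dots,n\bar g_d\}$) is elementary and correct. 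The one thing to be upfront about is that this imported criterion is precisely where the depth of the cited corollary lives, so your write-up is less a self-contained proof than a correct derivation of the proposition from the same body of structure theory the paper cites; that is a perfectly legitimate way to organize the argument, but the citation should be made precise (the relevant statement in Host--Kra requires the connectedness, or the normalization $G=G_0\langle\tau\rangle$, which your hypothesis $(2)$ supplies).
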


When dealing with a connected Weyl system we can always assume that such system is a product of \textit{standard Weyl System} by passing through an extension (see \cref{FKLemma}). This name was first defined in \cite{bergelson:hal-00017730}, however it has appeared without this name in other articles such as \cite{Frantzikinakis_Kra_averages_product_integrals:2005}.
\begin{defn}
    We say that $(X=\T^d,m_{\T^d},T)$ is a \textit{standard Weyl System} with rotation by $\alpha\in \R\setminus \Q$ if 
    \begin{equation*}
        T(x_1,\ldots,x_d)=(x_1+\alpha,x_2+x_1,\ldots,x_d+x_{d-1}).
    \end{equation*}
\end{defn}

\begin{lemma}[{\cite[Lemma 4.1]{Frantzikinakis_Kra_averages_product_integrals:2005}}]\label{FKLemma}
    Let $T:\T^d \to \T^d$ be defined by $T(x)=Ax+b$, where $A$ is a $d\times d$ unipotent integer matrix and $b\in \T^d$. Assume furthermore that $T$ is ergodic. Then $T$ is a factor of an ergodic affine transformation $S:\T^d \to \T^d$, where $S=S_1\times S_2 \times ... \times S_s$ and for $r=1,2,...,s$, $S_r:\T^{d_r}\to \T^{d_r}$ has the form
    $$S_r(x_1,...,x_{d_r})= (x_1+b_r,x_2+x_1,...,x_{d_r}+ x_{d_r-1}),$$
    for some $b_r\in \T$ and $d_r\in \N$.
\end{lemma}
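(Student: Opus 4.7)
The plan is to use the rational Jordan canonical form of $A$ to realize $T$ as a quotient of a product of standard Weyl systems, and then to transfer ergodicity in the reverse direction through the structure of Kronecker factors.

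First, I would exploit that $A\in M_d(\Z)$ is unipotent over $\Q$ to produce $P\in GL_d(\Q)$ such that $J:=P^{-1}AP$ is block diagonal, consisting of unipotent lower-triangular Jordan blocks $J^{(1)},\ldots,J^{(s)}$ of sizes $d_1,\ldots,d_s$ with $\sum_r d_r=d$. Picking a positive integer $N$ that clears the denominators of $P$, the matrix $M:=NP\in M_d(\Z)$ satisfies $MJ=AM$ and $\det M\neq 0$, so the induced map $\pi:\T^d\to\T^d$ defined by $\pi(y):=My\bmod 1$ is a surjective finite-to-one continuous group homomorphism.

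Next I would define $S(y):=Jy+c$ with $c\in\T^d$ chosen so that $Mc\equiv b\pmod 1$; such $c$ exists by surjectivity of $\pi$, and a direct computation using $MJ=AM$ gives $\pi\circ S=T\circ \pi$. Writing $c=(c^{(1)},\ldots,c^{(s)})$ in accordance with the block structure, the image of $J^{(r)}-I$ acting on $\T^{d_r}$ is $\{0\}\times\T^{d_r-1}$, so there is $v^{(r)}\in\T^{d_r}$ with $c^{(r)}+(J^{(r)}-I)v^{(r)}=(b_r,0,\ldots,0)$, where $b_r$ denotes the first coordinate of $c^{(r)}$. Conjugating $S$ by the translation by $v=(v^{(1)},\ldots,v^{(s)})$ and accordingly replacing $\pi$ by $\pi'(y):=M(y+v)\bmod 1$ yields an affine transformation in the required standard product form $S_1\times\cdots\times S_s$, together with a factor map $\pi'$ onto $T$.

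It remains to verify ergodicity. The Kronecker factor of $S_1\times\cdots\times S_s$ is the rotation by $(b_1,\ldots,b_s)$ on $\T^s$, which is ergodic if and only if $\{1,b_1,\ldots,b_s\}$ is $\Q$-linearly independent. Tracing through the rational change of basis $P$, the rotation vector of the Kronecker factor of $T$ equals $N(b_1,\ldots,b_s)$ modulo a rational vector; ergodicity of $T$ is equivalent to ergodicity of its Kronecker factor (a standard fact for unipotent affine transformations on tori), which forces $\{1,Nb_1,\ldots,Nb_s\}$ --- and therefore $\{1,b_1,\ldots,b_s\}$ --- to be $\Q$-linearly independent. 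Hence $S$ is ergodic. The main obstacle is precisely this final transfer of ergodicity: while the Jordan reduction and the construction of the factor map are essentially algebraic, deducing ergodicity of the extension $S$ from that of the quotient $T$ requires carefully tracking the integer scaling $N$ and the rational change of basis $P$ through the Kronecker factors of $S$ and $T$.
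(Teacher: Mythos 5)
The paper does not prove this lemma at all --- it is imported verbatim as \cite[Lemma 4.1]{Frantzikinakis_Kra_averages_product_integrals:2005} --- so there is no in-paper argument to compare against. Your reconstruction is correct and follows essentially the same route as the cited source: conjugate the unipotent integer matrix $A$ over $\Q$ into block Jordan form $J$ (which is again an integer matrix, so $S(y)=Jy+c$ is well defined on $\T^d$), clear denominators to get an integer matrix $M$ with $AM=MJ$ and hence a finite-to-one surjective torus homomorphism $\pi(y)=My$ intertwining $S$ with $T$, normalize the translation part blockwise using that the image of $J^{(r)}-I$ is $\{0\}\times\T^{d_r-1}$, and finally transfer ergodicity. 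The ergodicity step is the only place where your phrasing is looser than it should be: the rotation vector of the Kronecker factor of $T$ is not literally $N(b_1,\ldots,b_s)$ plus a rational vector, but rather $R(b_1,\ldots,b_s)$ plus a rational vector for some invertible rational matrix $R$ (coming from comparing an integer basis of $\ker(A^T-I)\cap\Z^d$ with the images $(M^{-1})^Tk^{(j)}$ of the block-leading coordinates). Concretely, if $\sum_r k_r b_r\in\Q$ for a nonzero integer vector $(k_1,\ldots,k_s)$, then the corresponding $J^T$-fixed vector $k$ pushes forward to a nonzero rational $A^T$-fixed vector $(M^{-1})^Tk$, and after clearing denominators one gets a nonzero $l\in\Z^d$ with $A^Tl=l$ and $l\cdot b\in\Q$, contradicting the Hahn--Furstenberg ergodicity criterion for unipotent affine maps. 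Since rational invertible changes of basis preserve rational independence, your conclusion stands; only the justification of that one sentence needs to be written out.
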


To conclude this section, we show that every set of $P$-measurable recurrence is a set of $P$-topological recurrence. This result is well known for $P=\{n\}$, but since the proof remains valid for any set of integral polynomials $P$, we include it here for completeness. 
\begin{prop}\label{measure-implies-topological}
    Let $P=\{p_1,\ldots,p_d\}$ be a set of integral polynomials. Let $(X,T)$ be a minimal topological system. Then, for every $T$-invariant measure $\mu\in M(T)$ we have that every set $R\subseteq \N$ of $P$-measurable recurrence for $(X,\mu,T)$ is a set of $P$-topological recurrence for $(X,T)$.
\end{prop}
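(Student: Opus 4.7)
The plan is to fix an arbitrary nonempty open set $U\subseteq X$ and produce $n\in R$ such that $U\cap T^{-p_1(n)}U\cap\cdots\cap T^{-p_d(n)}U\neq \emptyset$. The key idea is to reduce the topological conclusion to the measurable hypothesis by showing that under minimality every nonempty open set has positive $\mu$-measure, so that measurable recurrence applied to $U$ (viewed as a Borel set) yields a set of positive measure, which is in particular nonempty.

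First, I would exploit minimality to obtain that for every $x\in X$ the forward orbit $\{T^k x : k\geq 0\}$ is dense in $X$; in particular it meets the open set $U$, so $x\in T^{-k}U$ for some $k\geq 0$. Hence the family $\{T^{-k}U : k\geq 0\}$ is an open cover of $X$, and by compactness there exist $k_1,\ldots,k_M\geq 0$ with
\[
X=\bigcup_{i=1}^M T^{-k_i}U.
\]
Using $T$-invariance of $\mu$ gives $\mu(T^{-k_i}U)=\mu(U)$ for every $i$, and subadditivity yields
\[
1=\mu(X)\leq \sum_{i=1}^M \mu(T^{-k_i}U)=M\cdot\mu(U),
\]
so $\mu(U)\geq 1/M>0$.

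Having established $\mu(U)>0$, I would apply the assumption that $R$ is a set of $P$-measurable recurrence for $(X,\mu,T)$ directly to the Borel set $U$. This produces some $n\in R$ with
\[
\mu\bigl(U\cap T^{-p_1(n)}U\cap\cdots\cap T^{-p_d(n)}U\bigr)>0.
\]
Since any measurable set of positive measure is nonempty, the intersection is nonempty, which is exactly the condition defining $P$-topological recurrence for $(X,T)$.

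There is no real obstacle in this argument; it is a routine transfer between settings using two classical facts, namely that minimality plus compactness covers $X$ by finitely many preimages of any nonempty open set, and that invariance of $\mu$ makes all these preimages have equal measure. The only point worth emphasizing is that the minimality assumption on $(X,T)$ is essential: without it an invariant measure could be concentrated on a proper closed invariant subset disjoint from $U$, making $\mu(U)=0$ and trivializing the measurable hypothesis.
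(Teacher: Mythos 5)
Your proof is correct and follows essentially the same route as the paper: the paper simply invokes the standard fact that invariant measures on a minimal system have full support, then applies measurable recurrence to the open set $U$ and observes that positive measure implies nonemptiness. Your covering argument is just an explicit proof of that full-support fact, so the two arguments coincide in substance.
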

\begin{proof}
Since $(X,T)$ is minimal, every invariant measure $\mu$ has full support. That being so, if $R\subseteq \N$ is a set of $P$-measurable recurrence for $(X,\mu,T)$, then for each nonempty open set $U\subseteq X$ there is $n\in R$ such that $\mu(U\cap T^{-p_1(n)}U\cap \cdots \cap T^{-p_d(n)}U)>0. $ This in particular implies $U\cap T^{-p_1(n)}U\cap \cdots \cap T^{-p_d(n)}U\neq \emptyset, $ concluding that $R$ is a set of $P$-topological recurrence for $(X,T)$.
\end{proof}
\subsection{Characteristic factors and complexities}
For our study of recurrence, it will be crucial to characterize recurrence in a system through recurrence in a simpler factor. To achieve this, it is crucial to understand the so-called \textit{characteristic factors} of a system, which correspond to the factors that control ergodic averages for polynomial patterns.
\begin{defn}
    Let $(X,\mu,T)$ be an invertible measure-preserving system and $(Y,\nu,T)$ a factor of $(X,\mu,T)$. Let $P=\{p_1,\ldots,p_r\}$ be a set of essentially distinct integral polynomials. We say that \textit{$(Y,\nu,T)$ is characteristic for the ergodic averages over the pattern $P$} if for any set of functions $f_1, \ldots, f_r \in L^{\infty}(X)$ such that there is $i\in \{1,\ldots,r\}$ with $\bE(f_i| Y)=0$, 
\begin{align*}
\lim _{N-M \rightarrow \infty} \norm{\frac{1}{N-M} \sum_{n=M+1}^N  T^{p_1(n)}f_1 \cdots T^{p_r(n)}f_r }_{L^2(X)}=0.
\end{align*} 
\end{defn}
The following result describes the characteristic factor for the ergodic averages over a polynomial pattern.
\begin{theo}[{\cite[Host \& Kra]{Host_Kra05b}, \cite[Leibman]{Leibman05}}]\label{Finiteness-of-HK}
    Let $P=\{p_1,\ldots,p_r\}$ be a set of essentially distinct polynomials. Then, there is $d\in \N$ such that for every invertible ergodic system some characteristic factor for the averages over $P$ is an inverse limit of $d$-step nilsystems
\end{theo}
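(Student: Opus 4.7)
The plan is to combine Bergelson's polynomial exhaustion technique (PET) with the Host--Kra theory of characteristic factors. Host and Kra construct, for any ergodic system, an increasing sequence of factors $\sZ_0 \subseteq \sZ_1 \subseteq \sZ_2 \subseteq \cdots$ with the property that each $\sZ_d$ is an inverse limit of $d$-step nilsystems; so it suffices to exhibit some $d=d(P)$ depending only on the polynomials $p_1,\ldots,p_r$, and not on the system, such that whenever $\bE(f_i\mid \sZ_d)=0$ for some $i$, the $L^2$-norm of the ergodic average over $P$ tends to zero. Given such a $d$, we can replace each $f_i$ by $\bE(f_i\mid \sZ_d)$ one at a time, losing nothing in the limit, which is exactly the definition of $\sZ_d$ being characteristic.

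The workhorse is van der Corput's lemma: if $(u_n)$ is bounded in a Hilbert space and $\lim_{H\to\infty}\tfrac{1}{H}\sum_{h=1}^H \limsup_N |\tfrac{1}{N}\sum_n \langle u_{n+h},u_n\rangle|=0$, then $\tfrac{1}{N}\sum u_n\to 0$. I would apply this to $u_n=\prod_i T^{p_i(n)}f_i$. After expanding and composing by $T^{-p_{i_0}(n)}$ for a pivot index $i_0$ chosen to eliminate the ``heaviest'' polynomial, the correlation average $\tfrac{1}{N}\sum_n \langle u_{n+h},u_n\rangle$ becomes a new multicorrelation average with polynomials $p_i(n+h)-p_{i_0}(n)$ and shifted functions $T^h f_i\otimes \bar f_i$, indexed by strictly fewer or strictly simpler polynomials.

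To quantify ``simpler'' I would invoke Bergelson's PET weight on tuples of essentially distinct polynomials, a well-ordered invariant that strictly decreases under the above van der Corput reduction and terminates only at a tuple of linear polynomials with distinct non-zero leading coefficients. At the terminal step, the Host--Kra result for linear averages identifies $\sZ_{r-1}$ as a characteristic factor. Pulling the resulting bound back through the finitely many van der Corput reductions, each of which upgrades an $L^2$ bound on the average into a bound by a higher Gowers--Host--Kra seminorm $\|f_i\|_{U^k}$, produces a finite $d(P)$ such that vanishing of $\bE(f_i\mid \sZ_{d(P)})$ forces the original average to vanish in $L^2$. Crucially, $d(P)$ depends only on the number of polynomials and their degrees, giving a uniform bound across all ergodic systems.

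The principal obstacle is the combinatorial bookkeeping in the PET induction: one must check that every van der Corput step preserves essential distinctness so the induction is well-posed, track how many derivative operations accumulate on each $f_i$, and convert ``seminorm smallness'' of the final linearized tuple back into ``conditional expectation on $\sZ_{d(P)}$ vanishes.'' The latter relies on the Host--Kra inequality comparing $\|f\|_{U^{d+1}}$ with $\|\bE(f\mid\sZ_d)\|_{U^{d+1}}$, together with the fact that $\sZ_d$ is exactly the factor on which $\|\cdot\|_{U^{d+1}}$ becomes a norm. Granting these ingredients, the statement follows, and the bound on $d$ can be read off from the length of the PET reduction.
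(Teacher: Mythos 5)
This theorem is not proved in the paper at all: it is imported verbatim from Host--Kra and Leibman, so there is no internal argument to compare against. Your sketch is a faithful outline of the strategy actually used in those cited references (van der Corput plus Bergelson's PET weight to reduce to control by Gowers--Host--Kra seminorms, then the Host--Kra structure theorem identifying $\sZ_d$ as an inverse limit of $d$-step nilsystems), with the only loose point being that a single van der Corput step can \emph{increase} the number of polynomials --- it is the PET weight, not the count, that strictly decreases, as you yourself note a sentence later.
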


\begin{defn}
    Let $P=\{p_1,\ldots,p_r\}$ be polynomials. We define the \textit{Host-Kra complexity} $HK(P)$ of $P$ as the minimal $k$ such that for every measure-preserving system $(X,\mu,T)$, there is an inverse limit of $k$-step nilsystems that is characteristic for the ergodic averages over the pattern $P$.
\end{defn}

A related notion of complexity was introduced by Bergelson, Leibman, and Lesigne \cite{Bergelson_Leibman_Lesigne08}. This complexity is particularly relevant for our work, as it identifies the smallest characteristic factor within the family of Weyl systems.
    \begin{defn}
    Let $P=\{p_1,\ldots,p_r\}$ polynomials. We define the \textit{Weyl complexity} $W(P)$ as the minimal $k$ such that for every Weyl system $(X,\mu,T)$, its maximal $k$-step Weyl factor $(Y,\nu,T)$ is characteristic for the ergodic averages over the pattern $P$.
\end{defn}
\begin{remark}
    It is possible to define Host-Kra complexity and Weyl complexity for each polynomial $p_i$ in $P=\{p_1,\ldots,p_r\}$ as is done in \cite{KUCA_2023} for example. Nevertheless, we avoid doing so as we do not need such precision.
\end{remark}

\section{The Weyl Polynomials}\label{sec3}
\subsection{Definition and properties}
We will need the following notation borrowed from \cite{bergelson:hal-00017730}.
\begin{defn}
For a matrix function $M:\R\to \R^{d\times k}$ we define in $\R^d$ the span of the matrix $M$ as the span of the columns, namely if $M=(M_{1},\ldots,M_k)$ where $M_i$ is the $i$-th column of $M$, then
$$\Rspa(M) =\left\{\sum_{i=1}^k \alpha_iM_i(x) \in \R^d \mid x\in \R,  \alpha_1,\ldots,\alpha_k\in \R\right\}. $$
\end{defn}
\begin{remark}
     For a matrix $M:\Z\to \Z^{d\times k}$ with coordinate $M_{i,j}$ of $M$ being a polynomial, when we write $\Rspa(M)$ we assume implicitly that we mean the span of the unique matrix $\tilde{M}:\R\to \R^{d\times k}$ such that the restriction of $\tilde{M}$ to $\Z$ is $M$.
\end{remark}
 While the Weyl complexity provides essential information about the characteristic factors for studying ergodic averages for a pattern given by a set of polynomials $P$, it does not give further insights about the limit itself. However, for our purposes we require a deeper understanding. To be more precise, we need a description on how the polynomial multicorrelation sequences 
  \begin{equation}\label{correlation-sequences}
      n\mapsto \int f_0 T^{p_1(n)}f_1 \cdots T^{p_r(n)}f_r d\mu
  \end{equation}
  evolve in Weyl systems. This is particularly relevant in the 
 case $f_0=f_1=\cdots =f_r=\ind{A}$, where the sequence in \cref{correlation-sequences} becomes $n\mapsto \mu(A\cap T^{-p_1(n)} A \cap \cdots \cap T^{-p_r(n)}A).$ In order to analyze this finer structure, we introduce a new object of study, the \textit{Weyl polynomials}. Before doing so, we first need some preliminary definitions. Recall that for a polynomial $p:\Z\to \Z$ and $k\in \N$ we denote
$$p^{[k]}(n)=\binom{p(n)}{k}, ~ \text{for }n\in \N. $$
\begin{defn}
    Let $P=\{p_1,...,p_r\}$ be a set of integral polynomials. We define for $k\in \N$ the matrix with polynomial entries given by 
    \begin{align*}
    \Lambda_k(P)& =\left(\begin{array}{cccccc}p_1 & 0 & 0 & \ldots & 0 & 0 \\ \vdots & \vdots & \vdots & & \vdots & \vdots \\ p_r & 0 & 0 & \ldots & 0 & 0 \\ p_1^{[2]} & p_1 & 0 & \ldots & 0 & 0 \\ \vdots & \vdots & \vdots & & \vdots & \vdots \\ p_r^{[2]} & p_r & 0 & \ldots & 0 & 0 \\ \vdots & \vdots & \vdots & & \vdots & \vdots \\ \vdots & \vdots & \vdots & & \vdots & \vdots \\ p_1^{[k-1]} & p_1^{[k-2]} & p_1^{[k-3]} & \ldots & p_1 & 0 \\ \vdots & \vdots & \vdots & & \vdots & \vdots \\ p_r^{[k-1]} & p_r^{[k-2]} & p_r^{[k-3]} & \ldots & p_r & 0 \\ p_1^{[k]} & p_1^{[k-1]} & p_1^{[k-2]} & \ldots & p_1^{[2]} & p_1 \\ \vdots & \vdots & \vdots & & \vdots &\vdots \\ p_r^{[k]} & p_r^{[k-1]} & p_r^{[k-2]} & \ldots & p_r^{[2]} & p_r\end{array}\right).
        \end{align*}
        If the set of polynomials $P$ is clear for the context, we will just write $\Lambda_k$ (or $\Lambda_k(n)$ to evaluate the matrix in $n\in \N$). 
\end{defn}
The matrix $\Lambda_d(P)$ appears naturally when describing the polynomial orbits of points in a standard Weyl system. Indeed, following \cite[Section 4]{bergelson:hal-00017730}, for a standard Weyl system $(X=\T^d,\mu,T)$ with rotation by $\alpha\in \R\setminus\Q$, we have that 
$$         T^n(x_1,\ldots,x_d)=\left(x_1+\binom{n}{1}\alpha,x_2+\binom{n}{1}x_1+ \binom{n}{2}\alpha,\ldots,x_d+\sum_{i=1}^{d-1} \binom{n}{i} x_{d-i}+ \binom{n}{d}\alpha \right).$$
In consequence, for an integral polynomial $p:\R\to \R$, we get that
\begin{align*}
   &T^{p(n)}(x_1,\ldots,x_d) - (x_1,\ldots,x_d) =\\
   &\left(p(n)^{[1]}\alpha,p(n)^{[1]}x_1+ p(n)^{[2]}\alpha,\ldots,\sum_{i=1}^{d-1} p(n)^{[i]} x_{d-i}+ p(n)^{[d]}\alpha \right)\\
   &=  \alpha\Lambda_d(\{p\})_1 +  x_1\Lambda_d(\{p\})_2 +\cdots+ x_{d-1}\Lambda_d(\{p\})_d,
\end{align*}
where for each $i\in [d]$, $\Lambda_d(\{p\})_i$ is the $i$-th column of $\Lambda_d(\{p\})$. As a result, for a point $x$ in $[0,1)^d$ such that $\{x_1,\ldots,x_d,\alpha\}$ are rationally independent over $\Q$, and integral polynomials $p_1,\ldots,p_r$, we have that after reordering coordinates accordingly
    \begin{equation}\label{polynomial-orbits}
     \overline{\{ (T^{p_1(n)}x - x,\ldots, T^{p_r(n)}x-x)^T : n\in \Z \}}=\Rspa(\Lambda_d) \text{ mod } 1.
    \end{equation}
     
Moreover, these matrices also determine the Weyl complexity of $P$.   
    \begin{prop}[{\cite[Proposition 4.1]{bergelson:hal-00017730}}]\label{Second-def-W(P)}
        Let $P=\{p_1,\ldots,p_r\}$ a set of essentially distinct integral polynomials. The Weyl complexity $W(P)$ equals the minimal $k$ for which
        \begin{equation}\label{Weyl-complexity-condition}
            \dim{\Rspa(\Lambda_k(P))}= \dim{\Rspa(\Lambda_{k-1}(P))}+r.
        \end{equation}
    \end{prop}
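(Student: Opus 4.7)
The plan is to first reduce the problem to standard Weyl systems using \cref{FKLemma}: every ergodic unipotent affine transformation on a torus is a factor of a product of standard Weyl systems, and the characteristic factor property descends to factors and products. In a $d$-dimensional standard Weyl system $(\T^d, m, T)$, one first identifies the maximal $k$-step Weyl factor as the coordinate projection $\pi_k: \T^d \to \T^{\min(k,d)}$ onto the first $k$ coordinates; this follows from the explicit form of $T$ and the matching lower central series of the associated nilpotent Lie group, which assigns nilpotency step $j$ to the $j$-th coordinate.

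The next step is to Fourier-expand each test function as $f_i = \sum_{\vec m_i \in \Z^d} \hat f_i(\vec m_i)\, e(\vec m_i \cdot x)$ and combine this with the explicit expansion
\begin{equation*}
T^{p_i(n)}(x) - x = \alpha\,\Lambda_d(\{p_i\})_1 + \sum_{j=1}^{d-1} x_j\,\Lambda_d(\{p_i\})_{j+1}
\end{equation*}
from the discussion preceding \cref{polynomial-orbits}. After multiplying out and integrating in $x \in \T^d$, the ergodic average $\int \prod_i T^{p_i(n)}f_i\, dm$ becomes a sum indexed by tuples $(\vec m_1, \ldots, \vec m_r)\in(\Z^d)^r$ whose surviving contributions are the \emph{resonances}: those tuples for which the polynomial phase $n \mapsto \sum_i \vec m_i^\top \Lambda_d(\{p_i\})(n)$ degenerates into a constant with no residual $x_j$-dependence, so that the oscillatory integral does not wash out under Cesàro averaging and Weyl equidistribution along $n$.

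The $k$-step factor is then characteristic precisely when every resonance has each $\vec m_i$ supported in its first $k$ coordinates, i.e.\ when no nontrivial linear relation among the rows of $\Lambda_d$ involving a row of index exceeding $r(k-1)$ produces a trivial phase. Translating this to linear algebra, the $r$ newly adjoined rows of $\Lambda_k$ (those with leading entries $p_1^{[k]},\ldots,p_r^{[k]}$) contribute $r$ genuinely new directions to the column span exactly when no nontrivial resonance can be hidden in row-index block $r(k-1)+1,\ldots,rk$. This is precisely the rank condition
\begin{equation*}
\dim \Rspa(\Lambda_k(P)) = \dim \Rspa(\Lambda_{k-1}(P)) + r,
\end{equation*}
and the minimality of $k$ matches the minimality in the definition of $W(P)$.

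The main obstacle is the bookkeeping in the final translation: one must make rigorous how a linear relation among polynomial-valued rows of $\Lambda_k$ becomes a vanishing resonance, and conversely how a failure of the dimension to jump by the full $r$ produces a nontrivial character supported outside the first $k$ coordinates. This requires tracking polynomial degrees and multi-index supports simultaneously, and relies on the essential-distinctness hypothesis on $P$ to rule out low-order cancellations that would spuriously create resonances and violate Weyl equidistribution.
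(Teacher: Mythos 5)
First, a point of reference: the paper does not prove this proposition at all --- it is imported verbatim from Bergelson--Leibman--Lesigne \cite[Proposition 4.1]{bergelson:hal-00017730} --- so there is no in-paper argument to compare against. That said, your strategy (reduce to products of standard Weyl systems via \cref{FKLemma}, expand in characters, compute the multicorrelation integral explicitly from the orbit formula preceding \cref{polynomial-orbits}, and read off which ``resonances'' survive Ces\`aro averaging) is the right one, and it is essentially the computation the paper itself performs later in \cref{Weyl-poly-in-standard} and \cref{standard-decomposition}: the surviving tuples are exactly the integer vectors $\mathbf{v}\in\Z^r\times\Rspa(\Lambda_{d-1})^{\perp}$ with $v^0=-\sum_k v^k$ and with $\mathbf{v}^T\Lambda_d\mathbf{e_1}$ constant, and the $k$-step factor (projection onto the first $k$ torus coordinates) is characteristic if and only if every such $\mathbf{v}$ is supported on the first $k$ blocks of size $r$.

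The proposal nevertheless has genuine gaps, the largest of which you defer under the label ``bookkeeping.'' (i) The rank condition \cref{Weyl-complexity-condition} is a statement at the single level $k$, whereas $W(P)$ requires the $k$-step factor to be characteristic in Weyl systems of \emph{every} step $d\ge k$. You must therefore prove a saturation statement: once $\dim\Rspa(\Lambda_k)=\dim\Rspa(\Lambda_{k-1})+r$ holds, the same identity holds at every level above $k$, equivalently $\Rspa(\Lambda_d)^{\perp}\subseteq\R^{rk}\times\{0\}^{r(d-k)}$ for all $d\ge k$. Without this, the minimal $k$ satisfying \cref{Weyl-complexity-condition} could a priori be strictly smaller than $W(P)$; and since the paper's \cref{Weyl-complexity-and-space} is itself deduced \emph{from} the present proposition, this propagation cannot be borrowed from there. (ii) The reduction is incomplete in two ways: \cref{FKLemma} only covers connected systems, while $W(P)$ quantifies over all Weyl systems including those with a finite abelian component (this needs the $T^l$/connected-component argument and the behaviour of the matrices under $n\mapsto ln+i$, as in the proof of \cref{orthogonality-in-Weyl}); and the assertion that the characteristic-factor property ``descends to factors'' requires justifying that $\E(f\circ\pi\mid \tilde Y_k)=0$ follows from $\E(f\mid Y_k)=0$, which uses relative independence of the system over its maximal $k$-step Weyl factor and is not automatic. (iii) A bookkeeping slip that matters for your translation: with respect to the embedding $\Rspa(\Lambda_{k-1})\cong\{0\}^r\times\Rspa(\Lambda_{k-1})$ used in \cref{eq-orthocomplement-rspan}, the $r$ rows adjoined in passing from $\Lambda_{k-1}$ to $\Lambda_k$ are the top block with entries $(p_i,0,\ldots,0)$, not the block with leading entries $p_i^{[k]}$, so the resonances that must be excluded for the $k$-step factor to be characteristic live in blocks $rk+1,\ldots,rd$ of the ambient $d$-level matrix rather than in block $r(k-1)+1,\ldots,rk$.
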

We recall that for $d\in \N$ and a subspace $V\subseteq \R^d$, the orthocomplement $V^\perp$ of $V$ is taken with respect the euclidean inner product in $\R^{d}$.
    
Notice that we can realize $\Rspa(\Lambda_{k-1})$ as a subspace of $\Rspa(\Lambda_{k}) $ via the identification $\Rspa(\Lambda_{k-1}) \cong \{0\}^r\times \Rspa(\Lambda_{k-1})$ where the later is contained in $\Rspa(\Lambda_{k})$. Thus, \cref{Second-def-W(P)} implies that if the step of the standard Weyl system is at least $W(P)$, then for $\mu$-a.e. $x\in X$ the polynomial orbit in \cref{polynomial-orbits} has reached a point of ``oversaturation,'' meaning that all the information about the orbit is encoded in its projection to the $W(P)$-Weyl factor of the system. 

On the other hand, the condition in \cref{Weyl-complexity-condition} amounts to 
$$ \dim{\Rspa(\Lambda_{k})\cap (\R^r\times \Rspa(\Lambda_{k-1} )^\perp)}= r.$$
 The advantage of this equivalent form of \cref{Second-def-W(P)} is that the elements in $\Rspa(\Lambda_{k})\cap (\R^r\times \Rspa(\Lambda_{k-1})^\perp )$ encapsulate the polynomial frequencies that $k$-step Weyl systems can generate but $(k-1)$-step Weyl systems cannot. To be more precise, if $\mathbf{e_i}$ denotes the $i$-th canonical vector of $\R^k$ for each $i\in \{1,\ldots,k\}$, then
$$\Lambda_k = \Lambda_k [\mathbf{e_1}, \mathbf{e_2},\ldots, \mathbf{e_k}]= [\Lambda_k\mathbf{e_1}, \Lambda_k\mathbf{e_2},\ldots, \Lambda_k\mathbf{e_k}]= [\Lambda_k\mathbf{e_1}, \left(\begin{array}{c} \Vec{0} \\\Lambda_{k-1} \end{array} \right)]  $$
where $\Vec{0}$ is the vector with $r$ zeros, yielding
\begin{equation}\label{eq-orthocomplement-rspan}
   \Rspa(\Lambda_{k})^\perp= \Rspa(\Lambda_k\mathbf{e_1})^\perp \cap (\R^r\times \Rspa(\Lambda_{k-1})^\perp ). 
\end{equation}
This implies that the function 
\[
\xi : \Rspa(\Lambda_k) \cap \big(\R^r \times (\Rspa(\Lambda_{k-1})^\perp)\big) \to \R[n],
\]
\[
\mathbf{v} \mapsto \xi(\mathbf{v}) =\mathbf{v}^T \Lambda_k\mathbf{e_1}
\]
is injective. Indeed, for $\mathbf{v}\in  \Rspa(\Lambda_k) \cap (\R^r\times \Rspa(\Lambda_{k-1} )^{\perp})$ if $\xi(\mathbf{v})=0$ then 
\begin{align*}
    \mathbf{v}\in &\left( \Rspa(\Lambda_k\mathbf{e_1})^\perp \right) \cap \left( \Rspa(\Lambda_{k} )\cap (\R^r\times \Rspa(\Lambda_{k-1} )^{\perp})\right)\\&= \Rspa(\Lambda_k)^\perp \cap \Rspa(\Lambda_{k} )= \{0\},
\end{align*}
where in the first equality we used \cref{eq-orthocomplement-rspan}. Consequently, the space $ \Rspa(\Lambda_k) \cap \big(\R^r \times (\Rspa(\Lambda_{k-1})^\perp)\big)$ is in bijection with the image of $\xi$, which is a space of polynomials. These polynomials not only capture the gap in dimension between $\Rspa(\Lambda_{k}) $ and $\Rspa(\Lambda_{k-1}) $, but also define important frequencies that appear in $k$-step Weyl systems. This motivates the following definition.
\begin{defn}[Weyl Polynomials]
Let $P=\{p_1,...,p_r\}$ be a set of essentially distinct integral polynomials. We define the set of \textit{$k$-Weyl polynomials generated by $P$} as
\begin{equation*}
    \ws_1(P):=  \xi\left(\R^r\right)=\langle P\rangle .
\end{equation*}
and 
\begin{equation*}
    \ws_k(P):=  \xi\left(\R^r\times \Rspa(\Lambda_{k-1}(P) )^{\perp}\right),~ \text{if } k\geq 2.
\end{equation*}
 When $k=W(P)$, we will just denote $ \ws(P)$ and call it the \textit{Weyl polynomials} generated by the set $P$.
\end{defn}
\begin{remark}
   In the definition of the Weyl polynomials, we omit the restriction $\mathbf{v}\in \Rspa(\Lambda_k) $ as for each $\mathbf{v}\in \Rspa(\Lambda_k(P))^\perp$ we have $\xi(\mathbf{v})=0$.
\end{remark}
\begin{remark}
       For $ \mathbf{v}\in \R^r\times \Rspa(\Lambda_{k-1} )^{\perp}$ the only coordinate of the product $\mathbf{v}^T\cdot \Lambda_k(P)$ that is possibly nonzero is the first one. 
\end{remark}

    We notice that for each $k\in \N$ and for each set of essentially distinct integral polynomials $P=\{p_1,\ldots,p_r\}$, $\ws_k(P)$ is an $\R$-vector space. In particular, when $k=W(P)$, the space $\ws(P)$ has dimension $r$, as it is isomorphic to $ \Rspa(\Lambda_k(P)) \cap \big(\R^r \times (\Rspa(\Lambda_{k-1}(P))^\perp)\big)$. Moreover, as the polynomials $P$ are integral, it is possible to find a basis $Q=\{q_1,\ldots,q_r\}$ of $r$ integral polynomials in $\ws(P)$.

The following proposition is a corollary of \cref{Second-def-W(P)} and can be seen as an alternative definition to Weyl complexity.
\begin{prop}\label{Weyl-complexity-and-space}
Let $P = \{p_1,\dots,p_r\}$ be a set of essentially distinct
integral polynomials. Then the sequence $(\ws_i(P))_{i\ge 0}$ is nondecreasing:
\[
  \ws_k(P) \subseteq \ws_i(P) \quad \text{whenever } k \le i,
\]
and it stabilizes at the index $W(P)$ in the sense that
\[
  \ws_{W(P)-1}(P) \subsetneq \ws_{W(P)}(P)
  \quad\text{and}\quad
  \ws_i(P) = \ws_{W(P)}(P)
  \quad \text{for all } i \ge W(P).
\]
\end{prop}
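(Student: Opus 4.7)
The plan is to establish monotonicity via an explicit padding construction, then derive the dimension formula $\dim \ws_k(P) = \dim \Rspa(\Lambda_k(P)) - \dim \Rspa(\Lambda_{k-1}(P))$, and finally deduce both the strict jump at $W(P)$ and the eventual stability from \cref{Second-def-W(P)} combined with a simple dimension bound. Throughout I adopt the convention $\ws_0(P) := \{0\}$ so that the boundary case $W(P) = 1$ is handled uniformly.

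To prove $\ws_k(P) \subseteq \ws_{k+1}(P)$, take $q = \xi(\mathbf{v}) \in \ws_k(P)$ with $\mathbf{v} = (\mathbf{u}, \mathbf{w}) \in \R^r \times \Rspa(\Lambda_{k-1}(P))^\perp$, and consider the padded extension $\mathbf{v}' := (\mathbf{u}, \mathbf{w}, \mathbf{0}) \in \R^{r(k+1)}$ where $\mathbf{0} \in \R^r$. Since the first $rk$ entries of $\Lambda_{k+1}(P)\mathbf{e_1}$ agree with those of $\Lambda_k(P)\mathbf{e_1}$, direct computation gives $\mathbf{v}'^T\Lambda_{k+1}(P)\mathbf{e_1} = q$. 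The structural fact needed is that deleting the last column and the bottom $r$ rows of $\Lambda_k(P)$ recovers $\Lambda_{k-1}(P)$, while the final column $C_k$ of $\Lambda_k(P)$ has its first $r(k-1)$ entries equal to zero. Pairing $(\mathbf{w}, \mathbf{0})$ against a column $C_j$ of $\Lambda_k(P)$ then reduces to pairing $\mathbf{w}$ against the corresponding column of $\Lambda_{k-1}(P)$ (when $j < k$) or against zero (when $j = k$); in either case the inner product vanishes, so $(\mathbf{w}, \mathbf{0}) \in \Rspa(\Lambda_k(P))^\perp$ and thus $q \in \ws_{k+1}(P)$. Iterating yields $\ws_k(P) \subseteq \ws_i(P)$ whenever $k \leq i$.

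For the dimension formula, restrict $\xi$ to $\R^r \times \Rspa(\Lambda_{k-1}(P))^\perp$; its kernel consists of vectors in this domain orthogonal to $\Lambda_k(P)\mathbf{e_1}$, which by \eqref{eq-orthocomplement-rspan} equals $\Rspa(\Lambda_k(P))^\perp$. Subtracting kernel from domain dimensions gives
\begin{equation*}
    \dim \ws_k(P) = \dim \Rspa(\Lambda_k(P)) - \dim \Rspa(\Lambda_{k-1}(P)).
\end{equation*}
To bound this by $r$, project $\Rspa(\Lambda_k(P)) \subseteq \R^{rk}$ onto its first $r(k-1)$ coordinates: the image is $\Rspa(\Lambda_{k-1}(P))$ (again by the deletion observation), and the kernel of the projection intersected with $\Rspa(\Lambda_k(P))$ sits inside $\{\mathbf{0}\} \times \R^r$, hence has dimension at most $r$. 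Therefore $\dim \ws_k(P) \leq r$ for every $k$.

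The proposition now follows. By \cref{Second-def-W(P)} we have $\dim \ws_{W(P)}(P) = r$ while $\dim \ws_k(P) < r$ for $k < W(P)$, so combining with the inclusion $\ws_{W(P)-1}(P) \subseteq \ws_{W(P)}(P)$ proved above yields the strict containment $\ws_{W(P)-1}(P) \subsetneq \ws_{W(P)}(P)$. Since $k \mapsto \dim \ws_k(P)$ is non-decreasing and bounded above by $r$, it remains equal to $r$ for every $k \geq W(P)$, and equality of finite dimensions inside the inclusion chain promotes this to $\ws_k(P) = \ws_{W(P)}(P)$ for all such $k$. The only genuinely delicate step is the orthogonality verification for the padded vector; the rest is dimension counting.
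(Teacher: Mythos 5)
Your proof is correct, and while the monotonicity step coincides with the paper's (the same zero-padding of $\mathbf{v}$, though you are more careful in verifying that the padded tail $(\mathbf{w},\mathbf{0})$ actually lies in $\Rspa(\Lambda_k(P))^\perp$, a point the paper leaves implicit), your treatment of the stabilization and of the strict jump is genuinely different. The paper handles these by direct vector manipulation: for $k>W(P)$ it argues that any $\mathbf{v}$ representing an element of $\ws_k(P)$ must vanish in all coordinates beyond the $W(P)\cdot r$-th --- which tacitly uses that the saturation condition of \cref{Second-def-W(P)} persists at every step $\ge W(P)$, not just at the minimal one --- and for the strict inclusion it runs a maximality argument producing an explicit polynomial in $\ws_{W(P)}(P)\setminus\ws_k(P)$. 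You instead extract the identity $\dim\ws_k(P)=\dim\Rspa(\Lambda_k(P))-\dim\Rspa(\Lambda_{k-1}(P))$ from \cref{eq-orthocomplement-rspan} together with the uniform bound $\dim\ws_k(P)\le r$ via the coordinate projection, and then both the strict jump at $W(P)$ and the eventual constancy follow from \cref{Second-def-W(P)} by pure dimension counting inside the nested chain. Your route is shorter and, as a byproduct, actually establishes the persistence of the saturation condition for all $k\ge W(P)$ (since $k\mapsto\dim\ws_k(P)$ is nondecreasing and capped at $r$), which the paper's stabilization step needs but does not spell out; the only thing the paper's argument offers in exchange is an explicit witness for the gap $\ws_{W(P)}(P)\setminus\ws_{W(P)-1}(P)$, whose mere existence (used later in \cref{remark-theo-B}) your argument also yields.
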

\begin{proof}
First, we show that the sequence is non-decreasing. Let $k\in \N$ and let $q\in \ws_k(P)$. Let $\mathbf{v}\in \R^{kr}$ such that $\mathbf{v}^T\Lambda_{k}(n)= (q(n),0,\ldots,0)$. Define $w=(\mathbf{v},\Vec{0})\in \R^{(k+1)r}$. We observe that $w^T\Lambda_{k+1}(n)=(\mathbf{v}^T\Lambda_{k}(n),0)=(q(n),0,\cdots,0)$ and thus $q\in \ws_{k+1}(P)$. Therefore, we conclude that $\ws_{k}(P)\subseteq \ws_{k+1}(P)$.

Second, we see that the sequence is constant from $k=W(P)$. Let $k>W(P)$, $q\in \ws_k(P)$, and $\mathbf{v}\in \R^{kr}$ such that
\begin{equation}\label{eq-12}
   \mathbf{v}^T\Lambda_{k}(n)= (q(n),0,\ldots,0) .
\end{equation}
 This implies that the vector $(v_{r+1},\ldots,v_{kr})^T$ is orthogonal to $\mathrm{R}\text{-span }\Lambda_{k-1}$. Since $k-1\geq W(P)$, we have that $v_{i}=0$ for all $i>W(P)\cdot r$ thanks to \cref{Second-def-W(P)}. In particular 
 \begin{equation*}
   (q(n),0,\ldots,0) =\mathbf{v}^T\Lambda_{k}(n)= \Big(  (v_1,\ldots,v_{W(P)\cdot r})  \Lambda_{W(P)}(n),0,\ldots,0\Big),
\end{equation*}
which implies that $q\in \ws_{W(P)}(P)$.

Finally, we see that for $k<W(P)$, we have $\ws_k(P)\neq \ws_{W(P)}(P)$. Indeed, given that $k<W(P)$, by \cref{Second-def-W(P)} there is $\mathbf{w}\in  \Rspa( \Lambda_{k})^{\perp}$ such that not all its last $r$ coordinates are $0$. We choose one of such $\mathbf{w}$ with maximal $j\in \N_0$ satisfying that there exists $\mathbf{v}\in \R^{r\cdot j}$ such that $(\mathbf{v},\mathbf{w})\in \Rspa(\Lambda_{k+j})^{\perp}$ (we know that such $j$ exists by finiteness of the Weyl complexity, which follows from \cref{Finiteness-of-HK}). 
Define $q\in \ws_{k+j+1}(P)$ through $(0,\ldots,0,\mathbf{v}^T,\mathbf{w}^T)\Lambda_{k+j+1}=(q,0,\ldots,0)$. If $q\in \ws_{k}(P)$, then there should be $\mathbf{w}'\in \Rspa(\Lambda_{k-1})^\perp $ and $\mathbf{v}'\in \R^r$ such that $(\mathbf{v}'^T,\mathbf{w}'^T)\Lambda_{k}=(q,0,\ldots,0)$. Then we will have that the vector $-(\mathbf{v}',\mathbf{w}',0,\cdots,0)+(0,\ldots,0,\mathbf{v},\mathbf{w})$ is in $\Rspa(\Lambda_{k+j+1}(P))^\perp$ and has its last $r$ coordinates non-zero, which contradicts the choice of $\mathbf{w}$ (i.e., the maximality of $j$), which implies that $q\notin \ws_k(P)$, concluding.
\end{proof}
   For $k\leq W(P)$ it is possible that $\ws_{k-1}(P)=\ws_k(P)$, as the following example shows. 
\begin{ex}
   Consider $P=\{n,2n,n^2\}$. It can be shown that $W(P)=3$ and that $\ws_1(P)=\ws_2(P)$ is generated by $\{n,n^2\}$. Meanwhile, $\ws_3(P)$ is generated by $\{n,n^2,n^3-\frac{n^4}{2}\}.$
\end{ex}
We do not know exactly when $\ws_{k-1}(P)\neq \ws_{k}(P)$. A better understanding of this behavior could provide an answer for a conjecture of Bergelson, Leibman, and Lesigne \cite{Bergelson_Leibman_Lesigne08}, stating that $W(P)\leq |P|$.

We now list some additional useful properties of the space of Weyl polynomials
\begin{prop}\label{general-props}
   Let $P$ and $Q$ be sets of essentially distinct integral polynomials and $d\in \N$. Then
   \begin{enumerate}
          \item The set $P$ is contained in $\ws_d(P)$.
        \item If $W(P)=1$ then $P$ is a basis of $\ws_d(P)$.
       \item If $P\subseteq Q$ then $\ws_d(P)\subseteq \ws_d(Q)$.
       \item If $q$ is an integral polynomial, then $\ws_d(P\circ q)= \ws_d(P)\circ q$.
       \item If all polynomials in $P$ are linear, then $\ws(P)$ is the set of integral polynomial of degree at most $|P|$.
   \end{enumerate}
\end{prop}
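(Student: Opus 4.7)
The plan is to treat the five parts in the order given, since they build on each other. Parts (1)–(3) reduce to the definitions, to \cref{Weyl-complexity-and-space}, and to the observation that the rows of $\Lambda_d(P)$ sit inside $\Lambda_d(Q)$ when $P\subseteq Q$. Part (4) is a short computation based on the identity $\Lambda_d(P\circ q)(n)=\Lambda_d(P)(q(n))$. Part (5) is the main technical point and requires a dimension count combined with a computation of $W(P)$ for linear polynomial sets.

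For (1), by definition $\ws_1(P)=\langle P\rangle\supseteq P$, and the monotonicity recorded in \cref{Weyl-complexity-and-space} gives $\ws_1(P)\subseteq \ws_d(P)$ for every $d\geq 1$. For (2), \cref{Second-def-W(P)} says that $W(P)=1$ is equivalent to $\dim\Rspa(\Lambda_1(P))=r$; since a vector in $\Rspa(\Lambda_1(P))^\perp$ encodes exactly a linear relation among $p_1,\ldots,p_r$ (polynomials agreeing on infinitely many reals coincide), this forces $P$ to be $\R$-linearly independent. Then \cref{Weyl-complexity-and-space} gives $\ws_d(P)=\ws_1(P)=\langle P\rangle$ for every $d\geq 1$, and $P$ is a basis of $\ws_d(P)$.

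For (3), enumerate $Q=\{q_1,\ldots,q_l\}$ and fix an injection $\sigma:[r]\to[l]$ with $p_i=q_{\sigma(i)}$. The matrix $\Lambda_d(P)$ is obtained from $\Lambda_d(Q)$ by keeping exactly the rows indexed by $(j,\sigma(i))$ for $j\in[d]$, $i\in[r]$. Given $q\in\ws_d(P)$ with a witness $\mathbf{v}\in \R^r\times \Rspa(\Lambda_{d-1}(P))^\perp$ such that $\mathbf{v}^T\Lambda_d(P)\mathbf{e_1}=q$, I would define $\mathbf{v}^Q\in\R^{ld}$ by placing $v_{j,i}$ at position $(j,\sigma(i))$ and zero elsewhere. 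Then $(\mathbf{v}^Q)^T\Lambda_d(Q)=\mathbf{v}^T\Lambda_d(P)$, so $\xi(\mathbf{v}^Q)=q$, and the last $l(d-1)$ coordinates of $\mathbf{v}^Q$ lie in $\Rspa(\Lambda_{d-1}(Q))^\perp$ because their pairing with any column of $\Lambda_{d-1}(Q)$ collapses to the pairing of the corresponding tail of $\mathbf{v}$ with the matching column of $\Lambda_{d-1}(P)$. For (4), from $(p_i\circ q)^{[j]}(n)=p_i^{[j]}(q(n))$ one reads off $\Lambda_d(P\circ q)(n)=\Lambda_d(P)(q(n))$. For a nonconstant integral polynomial $q$, $q(\R)$ is infinite, so the polynomial-identity argument used in (2) yields $\Rspa(\Lambda_j(P\circ q))=\Rspa(\Lambda_j(P))$ for every $j$; the two domains of $\xi$ therefore agree, and the equality $\mathbf{v}^T\Lambda_d(P\circ q)(n)\mathbf{e_1}=\mathbf{v}^T\Lambda_d(P)(q(n))\mathbf{e_1}$ gives $\ws_d(P\circ q)=\ws_d(P)\circ q$.

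Part (5) is the main obstacle. Write $P=\{a_1n,\ldots,a_rn\}$ with the $a_i$ distinct nonzero integers, which is forced by essential distinctness. Every element of $\ws_d(P)$ is an $\R$-linear combination of the entries $\binom{a_in}{j}$ with $j\leq d$, each a polynomial of degree at most $d$ vanishing at $n=0$, so $\ws_d(P)$ is contained in the $d$-dimensional space $V_d$ of polynomials of degree at most $d$ with zero constant term. The plan is to show $W(P)=r$, at which point the isomorphism $\ws(P)\cong\Rspa(\Lambda_{W(P)}(P))\cap\bigl(\R^r\times \Rspa(\Lambda_{W(P)-1}(P))^\perp\bigr)$ established right before the definition of the Weyl polynomials forces $\dim\ws(P)=r=\dim V_r$ and hence $\ws(P)=V_r$. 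The hard step is thus establishing $W(P)=r$; I would argue by induction on $r$, with base case $r=1$ coming from (2), and inductive step comparing $\Rspa(\Lambda_k(P))$ to $\Rspa(\Lambda_k(P\setminus\{a_rn\}))$ via (3) and a Vandermonde-type argument on the distinct leading coefficients $a_i^j/j!$ of $\binom{a_in}{j}$ in order to track the dimension gap introduced by the extra polynomial at each stage. This dimension computation is the most delicate point of the proposition and will be where most of the effort goes.
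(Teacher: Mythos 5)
Parts (1)--(4) of your proposal are correct and essentially follow the paper's route: the paper proves (1) by exhibiting the explicit coordinate vector rather than invoking monotonicity, proves (2) exactly as you do from \cref{Weyl-complexity-and-space} and $\ws_1(P)=\langle P\rangle$, proves (3) by the same zero-extension of the witness vector, and proves (4) from the same identity $\Lambda_d(P\circ q)(n)=\Lambda_d(P)(q(n))$. Your versions are, if anything, slightly more careful — you explicitly check that the extended vector in (3) lies in $\R^{|Q|}\times\Rspa(\Lambda_{d-1}(Q))^\perp$ and that the two domains of $\xi$ coincide in (4), points the paper leaves implicit.

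The divergence, and the one real issue, is part (5). The paper does \emph{not} prove that $W(P)=r$ for $r$ distinct linear polynomials; it imports this as a known fact from \cite{bergelson:hal-00017730} and then runs a short induction on $r$: by part (3) and the inductive hypothesis $\ws(P)$ contains all polynomials of degree $\le r$, and since $\ws_{W(P)-1}(P)\subsetneq\ws_{W(P)}(P)$ by \cref{Weyl-complexity-and-space} it must also contain a polynomial of degree $r+1$, whence all of them. You instead propose to establish $W(P)=r$ from scratch via an inductive ``Vandermonde-type argument'' on the leading coefficients $a_i^j/j!$, and then conclude by the dimension count $\dim\ws(P)=r=\dim V_r$. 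The dimension-count conclusion is clean and valid (arguably tidier than the paper's induction), but the input $W(P)=r$ is precisely the crux and you have only announced it, not proved it; verifying via \cref{Second-def-W(P)} that the first column of $\Lambda_k$ adds exactly $r$ new dimensions first at $k=r$ is a genuine computation that your sketch does not carry out. The economical fix is to do what the paper does and cite this fact from \cite{bergelson:hal-00017730}, after which your dimension argument completes part (5) immediately; as written, that step is a gap.
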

\begin{proof} 
For proving $\textbf{1.}$, let $r=|P|$. For $i\in [r]$, if we take $\mathbf{v}$ as the vector that has $0$ in all coordinates but on the $i$-th coordinate, in which $v_i=1$ then we have that $\mathbf{v}^T \Lambda_{d}(P)=(p_i,0,\ldots,0)$, and thus $p_i\in \ws_d(P)$.

We note that $\textbf{2.}$ follows from \cref{Weyl-complexity-and-space} and the fact that $\ws_1(P)=\langle P \rangle$.

For proving $\textbf{3.}$, if $P\subseteq Q$, then $\Lambda_{d}(P)$ can be obtained as a submatrix of $\Lambda_{d}(Q)$ (eliminating the rows associated to $Q\setminus P$). Hence, if $\mathbf{v}\in \Q^{|P|\cdot d }$ is such that $\mathbf{v}^{T}\Lambda_{d}(P)=(q,0,\ldots,0)$ with $q\in \ws_d(P)$, then by extending $\mathbf{v}$ with $0$'s to $\mathbf{v}'$ we have that $\mathbf{v}^{T}\Lambda_{d}(Q)=(q,0,\ldots,0)$, getting that $q\in Q$.

In the case of $\textbf{4.}$, for $\mathbf{v}\in \R^r\times \Rspa(\Lambda_{d-1}(P) )^{\perp}$ we have that 
$$\mathbf{v}^t \Lambda_{d}(P\circ q)(n) \mathbf{e_1}=\mathbf{v}^t \Lambda_{d}(P)(q(n)) \mathbf{e_1}, $$
which implies that $\ws_{d}(P\circ q)= \ws_{d}(P)\circ q$.

Finally, we prove $\textbf{5.}$. We already know from \cite{bergelson:hal-00017730} that if $P$ consists of $r$ different linear polynomials, then $W(P)=r$. We prove the statement then by induction over $r$. For $r=1$ is obvious. Suppose that the statement is true for $r\geq 1$. Let $P=\{p_1,\ldots,p_{r+1}\}$ a set of linear polynomials. The set $\ws(P)$ contains all polynomials of degree $r$, given that it contains $\ws(P\setminus\{p_{r+1}\})$ by $\textbf{3.}$. On the other hand, if $\ws(P)$ does not contain a polynomial of degree $r+1$, then we have that $\ws(P)=\ws_{r+1-1}(P)$ which contradicts \cref{Weyl-complexity-and-space}. We conclude that $\ws(P)$ contains a polynomial of degree $r+1$ which implies that contains all polynomials of degree $r+1$, given that $\ws(P)$ is an $\R$-vector space.
\end{proof}

\subsection{Polynomial multicorrelation sequences in totally ergodic Weyl systems}
Now we turn to the analysis for the structure of polynomial multicorrelation sequences in Weyl systems. We will provide a ``polynomial Fourier expansion'' of such sequences into functions of the form $n\mapsto e(q(n))$, revealing the natural frequencies $q$ of the system. It turns out that these frequencies are precisely the Weyl polynomials. We start with standard Weyl systems and the case in which $f_0,\ldots,f_r$ are characters of the underlying compact abelian group. We recall that for $x\in \T$ we denote $e(x):=\exp\left(2\pi \mathbf{i} x\right)$.
\begin{prop}\label{Weyl-poly-in-standard}
    Let $(X=\T^d,\mu,T)$ be a $d$-step standard Weyl system with rotation by $\beta\in \T$. Let $P=\{p_1,\ldots,p_r\}$ be a set of essentially distinct integral polynomials. For $v\in \Z^d$ denote $\psi_v(x)=e(v_1x_1+\ldots+v_dx_d)$ with $x\in X$. Then, for every $v^0,\ldots,v^r\in \Z^d$ if the sequence
    $$n\mapsto \int \psi_{v^0} T^{p_1(n)}\psi_{v^1}\cdots T^{p_r(n)}\psi_{v^r}d\mu $$
    is not always $0$, then there is $q\in \ws_d(P)$ such that for each $n\in\N$
    $$\int \psi_{v^0} T^{p_1(n)}\psi_{v^1}\cdots T^{p_r(n)}\psi_{v^r}d\mu  = e(q(n)\beta).$$
\end{prop}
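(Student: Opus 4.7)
The plan is to expand the integrand as a single character of $\T^d$ whose coefficients are polynomials in $n$, apply orthogonality of characters of $\T^d$ to evaluate the integral, and recognize the surviving exponent as an element of $\ws_d(P)$ via the block structure of $\Lambda_d(P)$.

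First, I would use the iterated formula for the standard Weyl system recalled just before \eqref{polynomial-orbits} to expand each $\psi_{v^j}(T^{p_j(n)}x)$: the $k$-th coordinate of $T^{p_j(n)}x$ equals $x_k+\sum_{i=1}^{k-1}p_j^{[i]}(n)x_{k-i}+p_j^{[k]}(n)\beta$. Multiplying out and collecting coefficients, the integrand takes the form $e\bigl(\sum_{m=1}^d c_m(n)x_m + d(n)\beta\bigr)$, where
\[
d(n)=\sum_{j=1}^r\sum_{k=1}^d v^j_k\, p_j^{[k]}(n), \qquad c_m(n)=\Big(v^0_m+\sum_{j=1}^r v^j_m\Big)+\sum_{j=1}^r\sum_{i=1}^{d-m}v^j_{m+i}\, p_j^{[i]}(n),
\]
are integer-valued polynomials in $n$ depending only on the coefficient vectors $v^0,\dots,v^r$.

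Second, integrating over the Haar measure of $\T^d$ and using orthogonality of characters, the integral equals $e(d(n)\beta)$ whenever $c_m(n)=0$ for every $m\in[d]$, and equals $0$ otherwise. Since a nonzero polynomial in $\Q[n]$ has only finitely many integer roots, if some $c_m$ fails to be the zero polynomial the sequence vanishes off a finite set, making the desired identity $\int\cdots d\mu = e(q(n)\beta)$ impossible on all of $\N$ (as $e(\cdot)$ is never $0$). Hence the hypothesis that the sequence is not always $0$ forces $c_m\equiv 0$ as polynomials for every $m\in[d]$, so the integral equals $e(d(n)\beta)$ for every $n\in\N$.

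Third, I would recognize $d\in\ws_d(P)$ by aligning the coefficients $\{v^j_k\}_{j\in[r],\,k\in[d]}$ with the rows of $\Lambda_d(P)$: bundling them into a vector $\mathbf{w}\in\R^{dr}$ in the same block order and splitting $\mathbf{w}=(\mathbf{w}_{\mathrm{top}},\mathbf{w}_{\mathrm{bot}})$ with $\mathbf{w}_{\mathrm{top}}\in\R^r$. A direct inspection of $\Lambda_d(P)$ (the last $d-1$ columns are obtained from the columns of $\Lambda_{d-1}(P)$ by prepending $r$ zero rows) gives
\[
d(n)=\mathbf{w}^T\Lambda_d(P)(n)\,\mathbf{e}_1=\xi(\mathbf{w})(n), \qquad (\mathbf{w}^T\Lambda_d(P)(n))_{m+1}=(\mathbf{w}_{\mathrm{bot}}^T\Lambda_{d-1}(P)(n))_m\quad (1\le m<d).
\]
Comparing with the explicit form of $c_m$, the polynomial identities $c_m\equiv 0$ are equivalent to $\mathbf{w}_{\mathrm{bot}}^T\Lambda_{d-1}(P)\equiv 0$, i.e.\ $\mathbf{w}_{\mathrm{bot}}\in\Rspa(\Lambda_{d-1}(P))^\perp$. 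Thus $\mathbf{w}\in\R^r\times\Rspa(\Lambda_{d-1}(P))^\perp$, and one may take $q:=d=\xi(\mathbf{w})\in\ws_d(P)$.

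The only real obstacle is the combinatorial bookkeeping: indexing $\mathbf{w}$ so that it exactly matches the block layout of $\Lambda_d(P)$ and tracking the constant versus non-constant parts of each $c_m$ (the former forcing the identities $v^0_m=-\sum_j v^j_m$, the latter supplying the $\Rspa$ condition). Once this dictionary is set up, the identification of $d$ with $\xi(\mathbf{w})$ is essentially tautological.
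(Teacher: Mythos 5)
Your proposal follows the paper's proof essentially step for step: expand the integrand as a single character of $\T^d$ times $e(d(n)\beta)$, kill it by orthogonality unless every coefficient $c_m$ vanishes, and identify the surviving exponent as $\xi(\mathbf{w})$ for $\mathbf{w}\in\R^r\times\Rspa(\Lambda_{d-1}(P))^{\perp}$, exactly as in the paper's case distinction. The one place you go beyond the paper is in worrying that a not-identically-zero $c_m$ could still vanish at finitely many integers; your inference there (``hence the hypothesis that the sequence is not always $0$ forces $c_m\equiv 0$'') is not literally valid, since a sequence supported on a finite nonempty set is also ``not always $0$'' --- but the paper's own proof silently makes the same identification of pointwise and identical vanishing, so this is an imprecision inherited from the statement rather than a defect of your argument relative to the paper's.
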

\begin{proof}
  Denote $$\mathbf{v}=(v_{1}^1,\ldots,v_{1}^r,v_{2}^1,\ldots,v_{2}^r,\cdots,v_{d}^1,\ldots,v_{d}^r)^T .$$ We compute the integral: 
     \begin{align*}
 &\int \psi_{v^0} T^{p_1(n)}\psi_{v^1}\cdots T^{p_r(n)}\psi_{v^r}d\mu\\
     &=\int  e\Big( \sum_{k=0}^{r} \sum_{i=1}^{d} v^k_{i} (p_k^{[0]}(n)x_{i} + \cdots +p_k^{[i-1]}(n) x_{1}+ p_k^{[i]}(n)\beta )   \Big) dm_{\T^d}(x)\\
     &=  e\Bigg(\Big( \sum_{k=1}^{r}\sum_{i=1}^{d} p_k^{[i]}(n)v_{i}^k\Big)\cdot \beta\Bigg)\cdot  \prod_{i=1}^d \int  e\Big( \sum_{k=0}^r  (p_k^{[0]}(n) v_{i}^k +p_k^{[1]}(n) v_{i+1}^k+...+p_k^{[d-i]}(n)v_{d}^k )   x \Big)dm_{\T}(x)\\
        &=\begin{cases}
             e\Bigg( (\mathbf{v}^T\Lambda_d\mathbf{e}_1)\beta\Bigg) & \text{ if } \mathbf{v}\in \Z^r\times \Rspa{\Lambda_{d-1}}^\perp \text{ and } v^0+\sum_{k=1}^r  v^k=0\\
            0 & \text{ if not}
        \end{cases},
\end{align*}
where we used that the condition $$\sum_{k=0}^r  (p_k^{[0]}(n) v_{i}^k +p_k^{[1]}(n) v_{i+1}^k+...+p_k^{[d-i]}(n)v_{d}^k ) =0, ~\forall i\in [d]  $$  is equivalent to 
$$ v^0=-\sum_{k=1}^r  v^k \text{ and } \mathbf{v} \in \Z^r\times \Rspa{\Lambda_{k-1}}^\perp.  $$
Thus, $\int \psi_{v^0} T^{p_1(n)}\psi_{v^1}\cdots T^{p_r(n)}\psi_{v^r}d\mu$ is either $0$ or equal to $e(q(n)\beta)$ with $q\in \ws(P)$. 
\end{proof}
\cref{Weyl-poly-in-standard} gives an alternative definition of the Weyl polynomials in terms of weighted ergodic averages.
\begin{prop}\label{Weyl-Poly-alternative-def}
    Let $P=\{p_1,\ldots,p_r\}$ and $h$ be essentially distinct integral polynomials. Then $h\notin \ws_d(P)$ if and only if for every $d$-step standard Weyl system, irrational number $\alpha\in \R\setminus \Q$, and functions $f_0,\ldots,f_r\in L^\infty$:
    $$\lim_{N\to \infty}\frac{1}{N} \sum_{n=1}^N e(h(n)\alpha)\int f_0T^{p_1(n)}f_1\cdots T^{p_r(n)} f_r d\mu=0. $$
\end{prop}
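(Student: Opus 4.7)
The plan is to reduce to characters via the Fourier/Peter--Weyl decomposition of each $f_i$ on the compact abelian group $X=\T^d$, apply \cref{Weyl-poly-in-standard} term-by-term, and then invoke Weyl's equidistribution theorem. After approximating each $f_i$ in $L^2(\mu)$ by a finite trigonometric polynomial (and absorbing the error via Cauchy--Schwarz together with the uniform bound $\big|\int f_0T^{p_1(n)}f_1\cdots T^{p_r(n)}f_r\,d\mu\big|\leq \prod_{i=0}^r\|f_i\|_\infty$), the weighted average becomes a finite sum of the shape
$$\sum_{\mathbf{v}} \hat{f}_{\mathbf{v}} \cdot \frac{1}{N}\sum_{n=1}^N e\big(h(n)\alpha + q_{\mathbf{v}}(n)\beta\big),$$
where $\mathbf{v}=(v^0,\ldots,v^r)$ ranges over tuples of character indices on $\T^d$, $\hat{f}_{\mathbf{v}}=\prod_i \hat{f}_i(v^i)$, and each surviving $q_{\mathbf{v}}$ lies in $\ws_d(P)$ by \cref{Weyl-poly-in-standard}.

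For the forward direction, assuming the standing setup in which $h$ and $p_1,\ldots,p_r$ have zero constant term (so that $\ws_d(P)$ consists of polynomials vanishing at $0$), I would show that for every $q\in\ws_d(P)$ and every irrational $\alpha,\beta$, the polynomial $n\mapsto h(n)\alpha+q(n)\beta$ has at least one irrational coefficient in degree $\geq 1$, whence Weyl's theorem forces the corresponding term in the display to vanish in the limit. Writing $h(n)=\sum h_i n^i$ and $q(n)=\sum q_i n^i$, if every coefficient $h_i\alpha+q_i\beta$ with $i\geq 1$ were rational, comparing two such equations and using $\beta\notin\Q$ forces $h_j q_i=h_i q_j$ whenever $h_i\neq 0$. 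This yields $q_i=c\,h_i$ in every positive degree for a single constant $c\in\Q$, and the zero-constant-term condition promotes this to $q=ch$ as polynomials, which places $h=q/c$ inside the $\R$-vector space $\ws_d(P)$, contradicting the hypothesis.

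For the reverse direction, I would use that the linear map $\xi$ is defined by rational polynomial identities and $h$ is rational: the preimage $\xi^{-1}(h)\cap\big(\Q^r\times \Rspa(\Lambda_{d-1}(P))^\perp\big)$ is nonempty, so clearing denominators yields $\mathbf{v}\in \Z^r\times \Rspa(\Lambda_{d-1}(P))^\perp$ with $\xi(\mathbf{v})=Mh$ for some $M\in\N$. Interpreting the coordinates of $\mathbf{v}$ as the character indices $v^1,\ldots,v^r\in\Z^d$ and setting $v^0=-(v^1+\cdots+v^r)$ so that the integrability condition in the proof of \cref{Weyl-poly-in-standard} is satisfied, that proposition gives $\int \psi_{v^0}T^{p_1(n)}\psi_{v^1}\cdots T^{p_r(n)}\psi_{v^r}\,d\mu=e(Mh(n)\beta)$ for every $n$. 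Choosing $\alpha=-M\beta$, which is irrational since $\beta$ is, the exponent $h(n)\alpha+Mh(n)\beta$ vanishes identically, so the weighted average equals $1\neq 0$.

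The main obstacle is the combinatorial step in the forward direction: converting $h\notin\ws_d(P)$ into the Weyl-equidistribution criterion requires careful bookkeeping of the rational-versus-irrational structure of the coefficients, and relies on the fact that $\ws_d(P)$ is an $\R$-vector space closed under $\Q$-scaling. The $L^2$-approximation argument justifying the interchange of limit and Fourier sum is routine but should be spelled out to ensure the truncated averages approach the full one uniformly in $N$.
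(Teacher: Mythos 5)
Your proposal is correct and follows essentially the same route as the paper's proof: expand the $f_i$ into characters, apply \cref{Weyl-poly-in-standard} term by term so that each surviving term contributes a frequency $e(q(n)\beta)$ with $q\in\ws_d(P)$, and conclude via Weyl's equidistribution theorem in the forward direction and an explicit character witness in the reverse direction. The only substantive differences are that you decouple the weight frequency $\alpha$ from the rotation $\beta$ of the standard Weyl system --- the paper simply takes them equal, so your coefficient-comparison argument (together with the zero-constant-term caveat you flag, which the paper also tacitly relies on to rule out $h+q$ being a nonzero constant) covers the more general reading of the quantifiers in the statement --- and that in the reverse direction you clear denominators to realize $Mh$ with an integer vector and then choose $\alpha=-M\beta$, where the paper asserts an integral preimage of $-h$ outright.
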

\begin{proof}
    Assume that $h\notin \ws_d(P)$ and let $(X=\T^d,\mu,T)$ be a $d$-step standard Weyl system with rotation by $\alpha\in \R\setminus \Q$. Let $f_0,\ldots,f_r\in L^\infty(\mu)$. For each $k\in \{0,\ldots,r\}$ by writing $f_k$ in its Fourier expansion we can assume without loss of generality that $f_k= \psi_{v^k}$ for some $v^k\in \Z^d$. If the integral $\int \psi_{v^0} T^{p_1(n)}\psi_{v^1}\cdots T^{p_r(n)}\psi_{v^r}d\mu$ is zero for every $n\in \N$ we are done. If not, by \cref{Weyl-poly-in-standard} there is $q\in \ws_d(P)$ such that 
    $$\int \psi_{v^0} T^{p_1(n)}\psi_{v^1}\cdots T^{p_r(n)}\psi_{v^r}d\mu= e(q(n)\alpha). $$
    Then
   $$        \frac{1}{N} \sum_{n=1}^N e(h(n)\alpha)\int f_0T^{p_1(n)}f_1\cdots T^{p_r(n)} f_r d\mu =  \frac{1}{N} \sum_{n=1}^N e((h(n)+q(n))\alpha). $$
 Since $h\notin \ws_d(P)$, the integral polynomial $h-q$ is not constant, so by Weyl's equidistribution theorem we get that 
 $$\lim_{N\to \infty } \frac{1}{N} \sum_{n=1}^N e(h(n)\alpha)\int f_0T^{p_1(n)}f_1\cdots T^{p_r(n)} f_r d\mu=0.$$
For the other direction, assume that $h\in \ws_d(P)$. Let $(X=\T^d,\mu,T)$ be a $d$-step standard Weyl system with rotation by $\alpha\in \R\setminus \Q$. Consider $v_1,\ldots, v_r\in \Z^d$ such that the vector
$$\mathbf{v}:=(v_{1}^1,\ldots,v_{1}^r,v_{2}^1,\ldots,v_{2}^r,\cdots,v_{d}^1,\ldots,v_{d}^r)^T $$ 
satisfies
$$(-h,0,\ldots,0)= \mathbf{v}\Lambda_d \mathbf{e}_1. $$
Set $v^0=-\sum_{k=1}^r v^k $. Then for each $n\in \N$
$$  \frac{1}{N} \sum_{n=1}^N e(h(n)\alpha)\int \psi_{v^0} T^{p_1(n)}\psi_{v^1}\cdots T^{p_r(n)}\psi_{v^r}d\mu = \frac{1}{N} \sum_{n=1}^N e(h(n)\alpha) e(-h(n)\alpha) =1, $$
concluding.
\end{proof}
\cref{Weyl-Poly-alternative-def} is particularly useful to deduce algebraic properties of the $\R$-vector space of Weyl polynomials, such as the following.
\begin{prop}\label{some-properties-weyl-poly}
    Let $p_1,\ldots,p_r,h$ be essentially distinct integral polynomials and $d\in \N$.
    \begin{enumerate}
        \item  We have that $ \ws_d(p_1,\ldots,p_r)= \ws_d(p_1-p_r,\ldots,p_{r-1}-p_r,-p_r).$
        \item  For every $m\in \N$ and $i\in \N_0$, $h\in \ws_d(\{p_1,\ldots,p_r\})$ if and only if
    $$ \frac{h(mn +i )-h(i)}{m} \in \ws_d\left(\left\{\frac{p_1(mn +i )-p_1(i)}{m},\ldots,\frac{p_r(mn +i )-p_r(i)}{m}\right\}\right). $$
    \end{enumerate}
\end{prop}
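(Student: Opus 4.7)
The plan is to use the dynamical characterization of $\ws_d$ from Proposition~\ref{Weyl-Poly-alternative-def} for part~(1), and a direct computation with the matrix $\Lambda_d$ together with a change of variables for part~(2).

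For part~(1), applying $T^{-p_r(n)}$ to the integrand and using measure preservation yields
\[
\int f_0 \cdot T^{p_1(n)} f_1 \cdots T^{p_r(n)} f_r \, d\mu = \int f_r \cdot \prod_{k=1}^{r-1} T^{p_k(n)-p_r(n)} f_k \cdot T^{-p_r(n)} f_0 \, d\mu,
\]
which is precisely the $\{p_1-p_r,\ldots,p_{r-1}-p_r,-p_r\}$-multicorrelation with the permuted function tuple $(f_r, f_1, \ldots, f_{r-1}, f_0)$ in place of $(f_0, \ldots, f_r)$. Because Proposition~\ref{Weyl-Poly-alternative-def} characterizes membership in $\ws_d$ through a universal quantifier over the functions, the two Cesàro vanishing conditions coincide, and the equality $\ws_d(P) = \ws_d(Q)$ follows directly.

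For part~(2), I would first apply Proposition~\ref{general-props}(4) with $q(n) = mn + i$, reducing the claim to the equivalence $h(mn+i) \in \ws_d(\{p_k(mn+i)\}_k) \Leftrightarrow \tilde{h} \in \ws_d(\tilde{P})$. Writing $p_k(mn+i) = m\tilde{p}_k(n) + p_k(i)$ and expanding the binomial entries of $\Lambda_d$ via the Vandermonde-type identity
\[
\binom{m\tilde{p}_l(n) + p_l(i)}{j} = \sum_{t=0}^{j} A^{(i,m)}_{j,l,t}\, \tilde{p}_l^{[t]}(n),
\]
with $A^{(i,m)}_{j,l,t}$ built from $\binom{p_l(i)}{j-s}$ and the coefficients expressing $\binom{ma}{s}$ in the basis $\{\binom{a}{t}\}_t$, one obtains a block-lower-triangular relation between $\Lambda_d(\{p_k(mn+i)\}_k)$ and $\Lambda_d(\tilde{P})(n)$, modulo constant terms built from the $p_l^{[j]}(i)$. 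Given a witness $\mathbf{v}$ for $h(mn+i) \in \ws_d(\{p_k(mn+i)\}_k)$, reading off the coefficients of $\tilde{p}_l^{[t]}(n)$ in the first-column identity yields a candidate vector $\mathbf{w}$ with $\mathbf{w}^T \Lambda_d(\tilde{P})(n)\mathbf{e}_1 = \tilde{h}(n)$, and the remaining ($c \ge 2$)-column identities should supply the orthogonality conditions needed for $\mathbf{w}$ to be a valid witness. Invertibility of the construction (for the converse implication) is underwritten by the nonvanishing diagonal entries $c_{s,s}(m) = m^s$.

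The main obstacle I anticipate is verifying that the orthogonality conditions translate correctly between the two sides. The $c \ge 2$ column identities on the $\{p_k(mn+i)\}$-side involve coefficients $A^{(i,m)}_{j-c+1, l, t}$, whereas the corresponding constraints on the $\tilde{P}$-side involve $A^{(i,m)}_{j, l, t+c-1}$; these do not agree pointwise in $j, l, t$, so the argument must exploit the interlocking structure of all $d$ column identities simultaneously together with a pooled sum over the polynomial index $l$. If this direct approach proves too intricate, a fallback dynamical route is the following: start from a character representation giving a multicorrelation equal to $e(-h(n)\alpha)$ on a standard Weyl system, as furnished by Proposition~\ref{Weyl-poly-in-standard}; specialize to $n = mn'+i$ to obtain a multicorrelation in $(X, \mu, T^m)$ for the polynomials $\tilde{P}$; then lift the result to a product of standard Weyl systems via Lemma~\ref{FKLemma} and read off $\tilde{h} \in \ws_d(\tilde{P})$ from the resulting polynomial Fourier expansion on each factor.
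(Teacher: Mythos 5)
Your part~(1) is exactly the paper's argument: rewrite the multicorrelation by applying $T^{-p_r(n)}$ and invoke the universal quantifier over test functions in \cref{Weyl-Poly-alternative-def}. Nothing to add there.

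Part~(2), however, does not close as written, and the gap is exactly where you flagged it. After reducing via \cref{general-props}~(4) to comparing $\Lambda_d(\{p_k(mn+i)\}_k)$ with $\Lambda_d\left(\left\{\tfrac{p_k(mn+i)-p_k(i)}{m}\right\}_k\right)$, you must absorb the constants $p_k(i)$, $h(i)$ \emph{and} the factor $m$ in a single change of basis, and you concede that you cannot verify that this change of basis preserves the vanishing of the columns $c\ge 2$ (i.e.\ the constraint $\mathbf{v}\in\R^r\times\Rspa(\Lambda_{d-1})^\perp$); without that, membership in $\ws_d$ is not established in either direction. The paper avoids this entirely by splitting the substitution $n\mapsto mn+i$ into two steps that compose. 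First, the pure translation ($m=1$, arbitrary $i$) is handled dynamically, by the same device as your part~(1): in the averages of \cref{Weyl-Poly-alternative-def} replace $f_j$ by $T^{p_j(i)}f_j$ and multiply by $e(-h(i)\alpha)$, which gives $h\in\ws_d(P)$ iff $h(n+i)-h(i)\in\ws_d(\{p_k(n+i)-p_k(i)\})$ with no matrix computation and no constant terms to track. Second, the pure dilation ($i=0$) becomes a clean rescaling: $(h(mn),0,\dots,0)=\mathbf{v}^T\Lambda_d(\{p_k(mn)\})$ iff $\left(\tfrac{h(mn)}{m},0,\dots,0\right)=(v_1^T,mv_2^T,\dots,m^{d-1}v_d^T)\,\Lambda_d\left(\left\{\tfrac{p_k(mn)}{m}\right\}\right)$, where the passage between the two binomial bases is triangular, so the vanishing of the later columns is preserved. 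Composing the translation step with the dilation step applied to the shifted polynomials yields the full statement. I would restructure your proof along these lines rather than pursue the interlocking Vandermonde identities; the $T^m$ fallback you sketch is also workable in principle but would force you back through \cref{FKLemma} to return $T^m$ to standard form, which is more machinery than the problem needs.
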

\begin{proof}
    Statement $\textbf{1.}$ comes from the fact that 
    $$\int f_0 T^{p_1(n)}f_1\cdots T^{p_r(n)}f_r d\mu = \int f_r T^{p_1(n)-p_r(n)}f_1\cdots T^{p_{r-1}(n)-p_r(n)}f_{r-1} T^{-p_r(n)}f_0 d\mu, $$
    and thus by \cref{Weyl-Poly-alternative-def}, for a polynomial $q:\Z\to \Z$ we have that $q\in \ws_d(p_1,\ldots,p_r)$ if and only if $q\in \ws_d(p_1-p_r,\ldots,p_{r-1}-p_r,-p_r)$.
    
    For statement $\textbf{2.}$ let us first prove the case when $m=1$. This case comes from noticing that for each $(X,\mu,T)$ a $d$-step standard Weyl system, and  $\alpha\in \R\setminus \Q$, we have that
    $$\lim_{N\to \infty}\frac{1}{N} \sum_{n=1}^N e(h(n)\alpha)\int f_0T^{p_1(n)}f_1\cdots T^{p_r(n)} f_r d\mu=0$$
    for all $f_0,\ldots,f_r\in L^\infty(\mu)$ if and only if 
    $$\lim_{N\to \infty}\frac{1}{N} \sum_{n=1}^N e((h(n+i)-h(i))\alpha)\int g_0T^{p_1(n+i)-p_1(i)}g_1\cdots T^{p_r(n+i)-p_r(i)} g_r d\mu=0, $$
     for all $g_0,\ldots,g_r\in L^\infty(\mu)$, as we can take $g_0=f_0$ and $g_j= T^{p_j(i)}f_j$ for each $j\in \{1,\ldots,r\}$ and multiply the expression by $e(-h(i)\alpha)$.

    To prove the general case, it is enough to prove the case when $i=0$. Notice that by \cref{general-props} \textbf{4.}, $h\in \ws_d(\{p_1,\ldots,p_r\})$ if and only if $h(mn)\in \ws_d(\{p_1(mn),\ldots,p_r(mn)\} )$. On the other hand, $h(mn)\in \ws_d(\{p_1(mn),\ldots,p_r(mn)\} )$ if and only if there is $\mathbf{v}=(v_1^T,\ldots,v_d^T)^T\in \Q^{r\cdot d}$ such that for each $n\in \N$, $$(h(mn),0,\ldots,0)= \mathbf{v}^T\Lambda_{d}(\{p_1(mn),\ldots,p_r(mn)\})$$ or equivalently
     $$\left(\frac{h(mn)}{m},0,\ldots,0\right)= (v_1^T,mv_2^T,\ldots,m^{d-2} v_{d-1}^T,m^{d-1}v_d^T) \Lambda_d\left(\left\{\frac{p_1(mn)}{m},\ldots,\frac{p_r(mn)}{m}\right\}\right). $$
     This implies that  $h\in \ws_d(\{p_1,\ldots,p_r\})$ if and only if
     $$ \frac{h(mn  )}{m} \in \ws_d\left(\left\{\frac{p_1(mn  )}{m},\ldots,\frac{p_r(mn  )}{m}\right\}\right), $$
      concluding the argument.
\end{proof} 
The following result extends \cref{Weyl-poly-in-standard} to all totally ergodic Weyl system, giving \cref{teo-C} as a special case. 
\begin{theo}\label{standard-decomposition}
    Let $(X,\mu,T)$ be a connected $d$-step Weyl systems with Kronecker factor given by $x\in \T^s\mapsto x+\beta$, where $\beta=(\beta_1,\ldots,\beta_s)\in \T^s$ for some $s\in \N$. Let $P=\{p_1,\ldots,p_r\}$ be a set of essentially distinct integral polynomials. Then for functions $f_0,\ldots,f_r\in L^\infty(\mu)$ there are a sequence $(c_l)_{l\in \N} \in \ell^2(\N)$ with $$\norm{(c_l)_{l\in \N} }_{\ell^2(\N)}\leq\prod_{k=0}^{r}||f_k||_2$$ and integral polynomials $\{q_{l,j}\}_{l\in \N,j\in [s]}\subseteq \ws_d(P)$ such that
    \begin{equation}\label{Hidden-freq}
        \liminf_{L\to \infty} \sup_{n\in \N} \left| \int_X f_0T^{p_1(n)}f_1 \cdots T^{p_r(n)}f_r d\mu -\sum_{l=1}^L c_l \cdot e\left(q_{l,1}(n)\beta_1+\cdots+q_{l,s}(n)\beta_s\right)\right |=0.
    \end{equation}
\end{theo}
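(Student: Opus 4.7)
The plan is to reduce to a product of standard Weyl systems, perform a Fourier expansion, and apply Proposition \ref{Weyl-poly-in-standard} coordinate-by-coordinate to extract the polynomial frequencies in $\ws(P)$.

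First, by Proposition \ref{connected} the totally ergodic system $(X,\mu,T)$ is connected, and by Lemma \ref{FKLemma} it is a factor of a product $(Y,\nu,S) = \prod_{i=1}^s (\T^{d_i}, m_{\T^{d_i}}, T_i)$ of standard Weyl systems with rotations $\beta_i$ matching the Kronecker data in the hypothesis, and with each step $d_i \leq d$. Lifting $f_0,\ldots,f_r$ through the factor map preserves the multicorrelation integrals, so I may assume $(X,\mu,T) = (Y,\nu,S)$. Expanding each $f_k = \sum_v \hat f_k(v) \psi_v$ in the Fourier basis $\psi_v(y) = e(v \cdot y)$ indexed by $v = (v_1,\ldots,v_s)$ with $v_i \in \Z^{d_i}$, and using the product structure, I obtain
\[
\int f_0 \prod_{k=1}^r T^{p_k(n)} f_k\, d\nu = \sum_{v^0,\ldots,v^r} \Big(\prod_{k=0}^r \hat f_k(v^k)\Big) \prod_{i=1}^s \int \psi_{v_i^0}\prod_{k=1}^r T_i^{p_k(n)}\psi_{v_i^k}\, dm_{\T^{d_i}}.
\]
Proposition \ref{Weyl-poly-in-standard} says each factor integral is either $0$ or equals $e(q_i(n)\beta_i)$ for some integral $q_i \in \ws_{d_i}(P) \subseteq \ws(P)$ (the inclusion from Proposition \ref{Weyl-complexity-and-space}).

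For each admissible tuple $v = (v^0,\ldots,v^r)$ (i.e.\ one producing nonzero factor integrals), set $c(v) := \prod_k \hat f_k(v^k)$ and record the tuple $(q_1(v),\ldots,q_s(v))$. Enumerating admissible tuples as $v_1,v_2,\ldots$ (in increasing order of $\|v\|_\infty := \max_k \|v^k\|_\infty$), I set $c_l := c(v_l)$ and $q_{l,i} := q_i(v_l)$. Parseval applied factor-by-factor then gives
\[
\sum_l |c_l|^2 \leq \sum_{v^0,\ldots,v^r} \prod_k |\hat f_k(v^k)|^2 = \prod_{k=0}^r \|f_k\|_2^2,
\]
establishing the required $\ell^2$ bound; the $q_{l,i}$ are integral polynomials because the relevant vector in the definition of $\xi$ has integer entries.

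The main technical obstacle is the uniform $\liminf$ convergence. I plan to approximate each $f_k$ in $L^2$ by Fejér partial sums $\sigma_M f_k$, which satisfy $\|\sigma_M f_k\|_\infty \leq \|f_k\|_\infty$ and $\|\sigma_M f_k - f_k\|_2 \to 0$. Writing $F_M(n)$ for the correlation with $\sigma_M f_k$ in place of $f_k$, a telescoping expansion combined with the Cauchy--Schwarz bound $|\int h_0 \prod_k T^{p_k(n)} h_k\, d\nu| \leq \|h_k\|_2 \prod_{j\neq k} \|h_j\|_\infty$ (valid because the Koopman operator preserves $L^2$ and $L^\infty$ norms) yields $\sup_n |F(n) - F_M(n)| \to 0$ as $M\to\infty$. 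Each $F_M(n)$ is a finite linear combination of terms $c\cdot e(q\cdot\beta(n))$ whose coefficients are the $c_l$ scaled by Fejér multipliers $K_M(v_l^0)\cdots K_M(v_l^r) \in [0,1]$ converging pointwise to $1$. Because the enumeration was chosen so that the admissible tuples with $\|v\|_\infty \leq M$ occupy an initial segment $\{1,\ldots,L_M\}$ of the list $(v_l)$, a diagonal argument—pairing $L = L_M$ with $M \to \infty$ and combining the error $\sup_n |F - F_M|$ with the finite-block error $\sum_{l \leq L_M}|c_l|\,|1 - K_M(v_l^0)\cdots K_M(v_l^r)|$ (which tends to $0$ along a subsequence by dominated convergence applied inside an appropriate truncation)—extracts a subsequence of $L$ along which $\sup_n |F(n) - \sum_{l \leq L} c_l e(q_l\cdot\beta(n))|$ tends to zero, yielding the $\liminf$ statement.
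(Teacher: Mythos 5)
Your proposal follows the same route as the paper's proof: reduce to a product of standard Weyl systems via Lemma \ref{FKLemma}, expand in the character basis, apply Proposition \ref{Weyl-poly-in-standard} factor by factor to identify the surviving frequencies as elements of $\ws_d(P)$, and get the $\ell^2$ bound from Parseval. In fact you are more careful than the paper on one point: the paper approximates each $f_k$ by a \emph{sharp} Fourier truncation and then invokes ``Cauchy--Schwarz and the fact that $f_0,\dots,f_r$ are bounded by $1$,'' which silently assumes that sharp truncation preserves the $L^\infty$ bound; it does not, and your substitution of Fej\'er sums $\sigma_M f_k$ is exactly the right repair for the telescoping estimate $\sup_n|F(n)-F_M(n)|\to 0$.

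The one step that does not go through as written is the removal of the Fej\'er multipliers. You bound the discrepancy between $F_M$ and the unweighted partial sum by $\sum_{l\le L_M}|c_l|\,\bigl|1-K_M(v_l^0)\cdots K_M(v_l^r)\bigr|$ and claim this vanishes ``by dominated convergence applied inside an appropriate truncation.'' But $(c_l)$ is only in $\ell^2$, not $\ell^1$: splitting at a fixed $M_0$ leaves a tail controlled only by $\sum_{l>L_{M_0}}|c_l|$, which need not be small, and the multipliers $1-K_M(v_l)$ stay bounded away from $0$ for the (many) tuples near the boundary of the $M$-box, so the triangle-inequality bound genuinely fails for, say, $|c_l|\sim 1/l$. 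A clean fix exploits the freedom the statement gives you in choosing $(c_l)$ and the enumeration: fix a rapidly increasing sequence $M_1<M_2<\cdots$ and let the $l$-th block of coefficients be the \emph{increments} of the Fej\'er-weighted coefficients between scales $M_{j-1}$ and $M_j$ (the same frequency tuple may then occur at several indices $l$). The partial sums at the ends of blocks are exactly $F_{M_j}(n)$, so $\sup_n|F-F_{M_j}|\to 0$ gives the $\liminf$ statement directly, and since the multipliers $K_M(v)$ increase to $1$ in $M$ for fixed $v$, the increments for each fixed tuple have total variation at most $1$, so the $\ell^2$ bound $\le\prod_k\|f_k\|_2$ survives. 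With that modification your argument is complete and, modulo the $L^\infty$ issue it corrects, coincides with the paper's.
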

\begin{proof}
Using \cref{FKLemma} and possibly passing to an extension, it suffices to consider the case in which $(X,\mu,T)$ is a product of $s$ many standard $d$-step Weyl systems, with $X=\T^{sd}$ and $T=S_1\times\cdots \times S_s$ where $(\T^d,S_j)$ is a standard Weyl system such that for $\beta_j\in \R\setminus \Q$,
    $$S_j(x_1,...,x_{d})= (x_1+\beta_j,x_2+x_1,...,x_{d}+ x_{d-1}).$$
We emphasize that $\beta$ may not correspond to the rotation $\beta'$ in the Kronecker system of the original system. However, by carefully examining the proof of \cref{FKLemma}, one can deduce that $\beta'$ can be obtained by multiplying $\beta$ with an invertible matrix with rational coefficients, and as $\ws(P)$ is an $\R$-vector space, the conclusion will remain unchanged by this fact. 

    Let $\epsilon>0$ be fixed and $L\in \N$ to be determined. Let $f_0,\ldots,f_r\in L^\infty(\T^{sd})$ be functions which we will assume, without loss of generality, bounded by $1$. We notice that $L^2(\T^{sd})$ has the orthonormal basis given by
$$x=(x_1,\ldots,x_s)\in (\T^d)^s\mapsto \psi_{l_1,\ldots,l_s}(x)=\psi_{l_1}(x_1)\cdots  \psi_{l_s}(x_s)$$
with $l_1,\ldots l_s\in \Z^d$. Thus, we can take $L\in \N$ big enough such that for each $k\in \{0,\ldots,r\}$,  
$$\norm{f_k-\sum_{l_1,\ldots,l_s\in [-L,L]^d} \langle f_k,\psi_{l_1,\ldots,l_s}\rangle  \psi_{l_1,\ldots,l_s} }_{L^2(X)}\leq \frac{1}{k+1} \epsilon, $$
where $[-L,L]=\{-L,\ldots, -1,0,1,\ldots, L\}$. In this way, by Cauchy-Schwarz and the fact that $f_0,\ldots,f_r$ are bounded by $1$, for each $n\in \N$ we have 
\begin{align*}
   &\Bigg| \int_X f_0T^{p_1(n)}f_1 \cdots T^{p_r(n)}f_r d\mu\\ 
   &-\sum_{\substack{l_1^k,\ldots,l_s^k\in [-L,L]^d\\k=0,\ldots,r}} \left(\prod_{k=0}^r\langle f_k,\psi_{l_1^k,\ldots,l_s^k}\rangle \right)\cdot \int  \psi_{l_1^0,\ldots,l_s^0} T^{p_1(n)}\psi_{l_1^1,\ldots,l_s^1}\cdots T^{p_r(n)}\psi_{l_1^r,\ldots,l_s^r} d\mu\Bigg|\leq \epsilon.  
\end{align*}
 This way, we have that
\begin{align}
   &\lim_{L\to \infty}\sup_{n\in \N} \Bigg| \int_X f_0T^{p_1(n)}f_1 \cdots T^{p_r(n)}f_r d\mu\\ \label{double-line-eq}
   &-\sum_{\substack{l_1^k,\ldots,l_s^k\in [-L,L]^d\\k=0,\ldots,r}} \left(\prod_{k=0}^r\langle f_k,\psi_{l_1^k,\ldots,l_s^k}\rangle \right)\cdot \int  \psi_{l_1^0,\ldots,l_s^0} T^{p_1(n)}\psi_{l_1^1,\ldots,l_s^1}\cdots T^{p_r(n)}\psi_{l_1^r,\ldots,l_s^r} d\mu\Bigg|=0.  
\end{align}

To finish, we observe that in one hand, we have the following identity
$$\int  \psi_{l_1^0,\ldots,l_s^0} T^{p_1(n)}\psi_{l_1^1,\ldots,l_s^1}\cdots T^{p_r(n)}\psi_{l_1^r,\ldots,l_s^r} d\mu =\prod_{j=1}^s \int  \psi_{l_j^0} T^{p_1(n)}\psi_{l_j^1}\cdots T^{p_r(n)}\psi_{l_j^r} d\mu, $$
and if this product is not zero, then by \cref{Weyl-poly-in-standard} there are $q_1,\ldots,q_s\in \ws(P)$ such that 
$$\prod_{j=1}^s \int  \psi_{l_j^0} T^{p_1(n)}\psi_{l_j^1}\cdots T^{p_r(n)}\psi_{l_j^r} d\mu= \prod_{j=1}^s e(q_j(n) \beta_j)= e(q_1(n)\beta_1+\cdots+q_s(n)\beta_s). $$
On the other hand, we have that
\begin{align*}
\left( \sum_{\substack{l_1^k,\ldots,l_s^k\in [-L,L]^d\\k=0,\ldots,r}} \left(\prod_{k=0}^r\langle f_k,\psi_{l_1^k,\ldots,l_s^k}\rangle \right)^2 \right)^{1/2}&= \prod_{k=0}^r \left( \sum_{l_1,\ldots,l_s\in [-L,L]^d} \langle f_k,\psi_{l_1,\ldots,l_s}\rangle^2 \right)^{1/2}  \\ 
&\leq \prod_{k=0}^r \left( \sum_{l_1,\ldots,l_s\in \Z^d} \langle f_k,\psi_{l_1,\ldots,l_s}\rangle^2 \right)^{1/2}  \\
   & =
   \prod_{k=0}^r ||f_k||_2.
\end{align*}
Therefore, enumerating accordingly the nonzero terms in \cref{double-line-eq} finishes the proof.
\end{proof}

\cref{standard-decomposition} allows to prove the following extension of \cref{Weyl-Poly-alternative-def}.
 \begin{theo}\label{orthogonality-in-Weyl}
Let $d,r \in \mathbb{N}$ and let $(X,\mu,T)$ be a $d$-step Weyl system. Let $P=\{p_1,\ldots,p_r\}$ be a set of essentially distinct integral polynomials, $h$ an integral polynomial such that $h\notin \ws_d(P)$, and $\alpha\in \R\setminus \Q$. If $g: \mathbb{T} \rightarrow \mathbb{C}$ is Riemann integrable with $\int_\T g dm_{\T}=0$ and $f_0,f_1, \ldots, f_{r} \in L^{\infty}(\mu)$, then 
$$
\begin{aligned}
 \lim_{N\to \infty} \frac{1}{N} \sum_{n=1}^N g(h(n)\alpha) \int f_0T^{p_1(n)} f_1  \cdots  T^{p_r(n)} f_{r} d\mu=0.
\end{aligned}
$$
 \end{theo}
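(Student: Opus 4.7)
The plan is to reduce to the case where $g$ is a single character $x\mapsto e(kx)$ with $k\neq 0$, expand the multicorrelation sequence $A(n):=\int f_0 T^{p_1(n)}f_1\cdots T^{p_r(n)}f_r\,d\mu$ via \cref{standard-decomposition}, and then invoke Weyl's equidistribution theorem. The crux is a purely algebraic step showing that any resonance between $kh(n)\alpha$ and the frequencies $\{q_{l,j}(n)\beta_j\}$ produced by the Weyl system can occur only when $h\in\ws_d(P)$.

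\textbf{Reductions.} First, by passing to $T^k$ with $k$ the index in $X$ of the connected component of the identity, and splitting the Ces\`aro average into residue classes modulo $k$, I reduce to the case where $(X,\mu,T)$ is connected, hence totally ergodic by \cref{connected}; the hypothesis $h\notin\ws_d(P)$ transfers to the shifted-and-rescaled polynomials via \cref{some-properties-weyl-poly}(2), while \cref{Weyl-complexity-and-space} preserves non-membership at the possibly smaller step $d_0$ that arises. Next, I replace $g$ by its Fej\'er sums $\sigma_L g$, which are zero-mean trigonometric polynomials uniformly bounded by $\|g\|_\infty$ converging to $g$ almost everywhere. Because $h$ is a nonzero integral polynomial with zero constant term and $\alpha$ is irrational, $(h(n)\alpha)_n$ is equidistributed modulo $1$ by Weyl's theorem, so $\frac{1}{N}\sum_{n=1}^N|g-\sigma_L g|(h(n)\alpha)\to \int_\T |g-\sigma_L g|\,dm_\T$, which tends to $0$ as $L\to\infty$ by dominated convergence. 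Combined with $|A(n)|\le\prod_{i=0}^r\|f_i\|_\infty$, it suffices to prove
\[
\lim_{N\to\infty}\frac{1}{N}\sum_{n=1}^N e(kh(n)\alpha)\,A(n)=0\quad\text{for every }k\in\Z\setminus\{0\}. \tag{$\ast$}
\]

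\textbf{Expansion and equidistribution.} For $\varepsilon>0$, \cref{standard-decomposition} produces $L_0$, scalars $c_1,\dots,c_{L_0}$, and polynomials $q_{l,j}\in\ws_d(P)$ such that $A(n)$ is uniformly within $\varepsilon$ of $\sum_{l=1}^{L_0}c_l\,e(\sum_{j=1}^s q_{l,j}(n)\beta_j)$, where $x\mapsto x+\beta$ is the Kronecker factor rotation with $\beta=(\beta_1,\dots,\beta_s)\in\T^s$. So $(\ast)$ reduces to showing that the Ces\`aro average of $e(kh(n)\alpha+\sum_j q_{l,j}(n)\beta_j)$ tends to $0$ for each $l$. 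By Weyl's equidistribution theorem, this holds unless every non-constant coefficient in $n$ of the polynomial $P_l(n):=kh(n)\alpha+\sum_j q_{l,j}(n)\beta_j$ is rational; I now rule out this remaining case.

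\textbf{The algebraic crux.} Writing $[n^i]f$ for the coefficient of $n^i$ in a polynomial $f$, the remaining case reads $k\alpha\cdot[n^i]h+\sum_j\beta_j\cdot[n^i]q_{l,j}\in\Q$ for every $i\ge 1$. By total ergodicity, the Kronecker rotation $\beta$ is ergodic on $\T^s$, so $\{1,\beta_1,\dots,\beta_s\}$ is $\Q$-linearly independent. If $\{1,\alpha,\beta_1,\dots,\beta_s\}$ is $\Q$-linearly independent as well, then this forces $[n^i]h=0$ and $[n^i]q_{l,j}=0$ for all $j$ and $i\ge 1$, making $h$ constant; combined with $h(0)=0$, $h\equiv 0\in\ws_d(P)$, a contradiction. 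Otherwise, write $\alpha=\sum_j a_j\beta_j+b$ with $a_j,b\in\Q$ and not all $a_j$ zero; substituting and separating the coefficients of $\beta_1,\dots,\beta_s$ by $\Q$-independence yields $ka_j\cdot[n^i]h+[n^i]q_{l,j}=0$ for every $i\ge 1$ and each $j$. Combined with $q_{l,j}(0)=0=h(0)$, this gives the polynomial identity $q_{l,j}=-ka_j h$ for every $j$. Taking any $j$ with $a_j\neq 0$ yields $h=-q_{l,j}/(ka_j)\in\ws_d(P)$ by $\R$-linearity of $\ws_d(P)$, once more a contradiction. The main obstacle I anticipate is this algebraic step: pinning down that any $\Q$-linear dependency among $\alpha$ and the Kronecker frequencies $\beta_j$ must pull $h$ into $\ws_d(P)$ through its closure under rational rescaling. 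Everything else reduces to standard equidistribution and trigonometric approximation.
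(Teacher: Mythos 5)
Your proposal is correct and follows essentially the same route as the paper's proof: reduce $g$ to a nonzero character by trigonometric approximation, reduce to the connected (totally ergodic) case by splitting into residue classes modulo the index of $X_0$ and invoking \cref{some-properties-weyl-poly}, expand the multicorrelation sequence via \cref{standard-decomposition}, and run the identical rational-independence argument showing that the phase polynomial $kh(n)\alpha+\sum_j q_{l,j}(n)\beta_j$ must have an irrational coefficient unless $h\in\ws_d(P)$. The only (cosmetic) difference is your use of Fej\'er means plus equidistribution of $(h(n)\alpha)_n$ in place of the paper's approximation of $g$ from above and below by continuous functions.
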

\begin{proof}
     Without loss of generality, let us assume that the functions $f_0, \ldots, f_r$ are continuous and bounded by $1$. Since any Riemann integrable function can be approximated by above and below by continuous functions, and those in turn can be uniformly approximated by trigonometric polynomials by the Stone–Weierstrass theorem, we also assume that $g(t)=e(kt)$, for some $k\in \Z$. Moreover, by replacing $h$ by $kh$, one can assume that $k=1$.
     
     We prove first the case in which $X$ is connected. By \cref{standard-decomposition} there is $\beta\in \T^s$, a sequence $(c_l)_{l\in \N} \in \ell^2(\N)$ with $\norm{(c_l)_{l\in \N} }_{\ell^2(\N)}\leq 1$, integral polynomials $\{q_{l,j}\}_{l\in \N,j\in [s]}\subseteq \ws_d(P)$ such that for $\epsilon>0$ there is $M\in \N$ with
    $$ \left|\int f_0 T^{p_1(n)}f_1 \cdots T^{p_r(n)}f_r d\mu  -  \sum_{l=1}^M c_l \cdot e\left( \sum_{j=1}^s q_{l,j}(n)\cdot \beta_j \right)
    \right|\leq \epsilon,~ \forall n\in \N. $$
    Thus, we have
    \begin{align*}
        &\left|\frac{1}{N}\sum_{n=1}^N e(h(n)\alpha)\cdot \int f_0(T^{p_1(n)}x)\cdots f_r(T^{p_r(n)}x) d\mu(x) \right|\\
        &\leq  \epsilon + \left|\frac{1}{N}\sum_{n=1}^N\sum_{l=0}^M c_l \cdot e\left(h(n)\alpha+ \sum_{j=1}^s q_{l,j}(n)\cdot \beta_j \right)\right|\\
        &\leq  \epsilon + \sum_{l=0}^M\left|\frac{1}{N}\sum_{n=1}^N  e\left(h(n)\alpha+ \sum_{j=1}^s q_{l,j}(n)\cdot \beta_j \right)\right|.
    \end{align*}
    Notice that for each $l\in \{1,\ldots,M\}$, $ h(n)k\alpha + \sum_{j=1}^s q_{l,j}(n)\beta_j$ is a polynomial with at least one irrational coefficient. Indeed, if $\{1,b_1,\ldots,b_s,\alpha\}$ are rationally independent, then the statement is direct. If not, as $\{1,\beta_1,\ldots,\beta_s\}$ are rationally independent, we have that there are rational numbers $m_0,m_1,\ldots,m_s$ such that 
    $$\alpha+m_0+m_1\beta_1+\cdots+m_s\beta_s=0. $$
    If the polynomial $h(n)k\alpha + \sum_{j=1}^s q_{l,j}(n)\beta_j$ does not have an irrational coefficient, then
    $$h(n)\alpha + \sum_{j=1}^s q_{l,j}(n)\beta_j= m_0h(n)+\sum_{j=1}^s (q_{l,j}(n)+m_jh(n))\beta_j\in \Q,$$
    which implies that for each $j\in [s]$ and $n\in \N$,
    $$q_{l,j}(n)+m_jh(n)=0. $$
The fact that $h\notin \ws_d(P)$ yields $m_j=0$ for each $j\in [s]$, and thus $\alpha\in \Q$, which is a contradiction. Thus, by Weyl equidistribution theorem
$$\limsup_{N\to \infty} \left|\frac{1}{N}\sum_{n=1}^N e(h(n)\alpha)\cdot \int f_0 T^{p_1(n)}f_1 \cdots T^{p_r(n)}f_r d\mu\right|\leq \epsilon. $$
As $\epsilon>0$ was arbitrary, we conclude that 
$$\lim_{N\to \infty} \frac{1}{N}\sum_{n=1}^N e(h(n)\alpha)\cdot \int f_0 T^{p_1(n)}f_1 \cdots T^{p_r(n)}f_r d\mu =0 $$
finishing the proof in the connected case.

For the general case, if $X_0$ denotes the connected component of $e_X$ on $X$, then by compactness there is $m \in \N$ such that $T^mX_0=X_0$ and the sets $\{T^{i}X_0\}_{i=1}^m$ are pairwise disjoint. In particular, we have that $\mu=\frac{1}{m}\sum_{j=1}^m T^j \mu_{X_0}$. Then, we have that 
\begin{align*}
    &\lim_{N\to \infty }\frac{1}{N}\sum_{n=1}^N e(h(n)\alpha)\cdot \int f_0 T^{p_1(n)}f_1 \cdots T^{p_r(n)}f_r d\mu \\
    &=  \lim_{N\to \infty }\frac{1}{N/m}\sum_{n=1}^{N/m} \frac{1}{m}\sum_{i=1}^m e(h(mn+i)\alpha)\cdot \int f_0 T^{p_1(mn+i)}f_1 \cdots T^{p_r(mn+i)}f_r d\mu\\
    &=  \lim_{N\to \infty }\frac{1}{N/m}\sum_{n=1}^{N/m} \frac{1}{m}\sum_{i=1}^m \frac{1}{m}\sum_{j=1}^m e(h(mn+i)\alpha)\cdot \int T^jf_0 T^{p_1(mn+i)}T^jf_1 \cdots T^{p_r(mn+i)}T^jf_r d\mu_{X_0}.
\end{align*}
Thus, it is enough to show that for $i\in \{1,\ldots,m\}$ and for all $f_0,\ldots,f_r\in L^\infty(\mu_{X_0})$, 
\begin{equation}\label{eq-n}
   \lim_{N\to \infty}\frac{1}{N}\sum_{n=1}^{N} e((h(mn+i)-h(i))\alpha)\cdot \int f_0 T^{p_1(mn+i)-p_1(i)}f_1 \cdots T^{p_r(mn+i)-p_r(i)}f_r d\mu=0 . 
\end{equation}
 Notice that the polynomials $\{\frac{p_j(mn+i)-p_j(i)}{m}\}_{j\in [r]}$ are essentially distinct integral polynomials. If we denote the transformation $S=T^m$, then proving \cref{eq-n} amounts to show that 
 \begin{equation}\label{eq-n+1}
      \lim_{N\to \infty}\frac{1}{N}\sum_{n=1}^{N} e\left(\frac{h(mn+i)-h(i)}{m}\cdot (m\alpha)\right)\cdot \int f_0 S^{\frac{p_1(mn+i)-p_1(i)}{m}}f_1 \cdots S^{\frac{p_r(mn+i)-p_r(i)}{m}}f_r d\mu=0.  
 \end{equation}
 On the other hand, \cref{eq-n+1} follows from the connected case, as $$\frac{h(mn+i)-h(i)}{m}\notin \ws(\frac{p_1(mn+i)-p_1(i)}{m},\ldots, \frac{p_r(mn+i)-p_r(i)}{m} )$$ by \cref{some-properties-weyl-poly} and $(X_0,\mu_{X_0},S)$ is a totally ergodic Weyl system by \cref{connected}. 
\end{proof}
\begin{remark}
  By \cref{Weyl-Poly-alternative-def} the condition $h\notin \ws_d(P)$ is necessary for \cref{orthogonality-in-Weyl}.  
\end{remark}
As a direct consequence of \cref{standard-decomposition} and \cref{orthogonality-in-Weyl} we obtain \cref{teo-D}, which we restate for the convenience of the reader.
\begin{corxD}
Let $P=\{p_1,\ldots,p_r\}$ and $Q=\{q_1,\ldots,q_l\}$ be sets of essentially distinct integral polynomials. Then $$\ws(P)\cap \ws(Q)=\{0\}$$ if and only if, for every totally ergodic Weyl systems $(X,\mu,T)$ and $(Y,\nu,S)$, and functions $f_0,\ldots,f_r\in L^\infty(X)$ and $g_0,\ldots,g_l\in L^\infty(Y)$, the sequences
$$ n\mapsto  \int f_0T^{p_1(n)} f_1  \cdots  T^{p_r(n)} f_{r} d\mu, \text{ and } n\mapsto \int g_0S^{q_1(n)} g_1  \cdots  S^{q_l(n)} g_{l} d\nu $$
are asymptotically uncorrelated.
\end{corxD}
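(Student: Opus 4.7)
The plan is to apply the structure theorem (\cref{teo-C}) to both polynomial multicorrelation sequences, reducing the statement to a Weyl-type equidistribution question about polynomial phases involving the Kronecker rotations of $X$ and $Y$; the hypothesis $\ws(P)\cap\ws(Q)=\{0\}$ will be used to rule out accidental cancellations between the two.

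For the forward direction, assume $\ws(P)\cap\ws(Q)=\{0\}$ and let the Kronecker factors of $(X,\mu,T)$ and $(Y,\nu,S)$ be rotations by $\beta\in\T^s$ and $\gamma\in\T^t$; total ergodicity ensures that $\{1,\beta_1,\ldots,\beta_s\}$ and $\{1,\gamma_1,\ldots,\gamma_t\}$ are each $\Q$-linearly independent. Fix $\epsilon>0$ and apply \cref{teo-C} to write, uniformly in $n$,
\[
a_n = \sum_{l=1}^{L} c_l\, e(G_l(n)) + O(\epsilon), \qquad b_n = \sum_{m=1}^{M} d_m\, e(H_m(n)) + O(\epsilon),
\]
with $G_l(n)=\sum_i q_{l,i}(n)\beta_i$, $q_{l,i}\in\ws(P)$, and $H_m(n)=\sum_j q'_{m,j}(n)\gamma_j$, $q'_{m,j}\in\ws(Q)$. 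Every element of $\ws(P)$ and $\ws(Q)$ has zero constant term, so $G_l(0)=H_m(0)=0$; moreover, the $\Q$-linear independences above imply that each $G_l$ either vanishes identically or has an irrational non-constant coefficient in $n$, and likewise for $H_m$. Consequently $E_l:=\lim_N \tfrac{1}{N}\sum_n e(G_l(n))$ equals $1$ in the first case and $0$ in the second, with the same dichotomy for $E'_m$.

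The core computation is the identity
\[
C_{l,m} := \lim_{N\to\infty}\frac{1}{N}\sum_{n=1}^{N} e(G_l(n) + H_m(n)) = E_l E'_m \qquad \text{for every } l, m.
\]
The delicate case is $G_l\not\equiv 0$ and $H_m\not\equiv 0$, where one must show $C_{l,m}=0$. By Weyl's theorem, $C_{l,m}\neq 0$ would force every non-constant $n$-coefficient of $G_l+H_m$ to be rational. Choose a $\Q$-basis $\{1,\delta_1,\ldots,\delta_u\}$ of the $\Q$-span of $\{1,\beta_i,\gamma_j\}$ and write $\beta_i=r_{i,0}+\sum_k r_{i,k}\delta_k$, $\gamma_j=s_{j,0}+\sum_k s_{j,k}\delta_k$ with rational entries. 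The rationality condition translates, for each $k\geq 1$, into $R_k+S_k$ being constant in $n$, where $R_k(n):=\sum_i q_{l,i}(n) r_{i,k}\in\ws(P)$ and $S_k(n):=\sum_j q'_{m,j}(n) s_{j,k}\in\ws(Q)$. Since $R_k(0)=S_k(0)=0$, this yields $R_k=-S_k\in\ws(P)\cap\ws(Q)=\{0\}$, so $R_k\equiv 0$ for every $k\geq 1$. The matrix $(r_{i,k})_{1\leq i\leq s,\, 1\leq k\leq u}$ has full row rank $s$ (an immediate consequence of $\Q$-linear independence of $\{1,\beta_1,\ldots,\beta_s\}$ after projecting away the first coordinate), so $R_k\equiv 0$ for all $k\geq 1$ forces $q_{l,i}\equiv 0$ for every $i$, contradicting $G_l\not\equiv 0$. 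This proves the identity; summing over $l,m$ and letting $\epsilon\to 0$ yields $\lim_N \frac{1}{N}\sum_n a_n b_n = \bar{a}\cdot \bar{b}$.

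For the converse, assume $h\in \ws(P)\cap \ws(Q)$ is nonzero and fix $\alpha\in\R\setminus\Q$. The construction in the ``if'' direction of the proof of \cref{Weyl-Poly-alternative-def} (together with \cref{Weyl-poly-in-standard}) produces a standard Weyl system $(X,\mu,T)$ of step $W(P)$ with rotation $\alpha$ and bounded functions $f_0,\ldots,f_r$ making $a_n=e(h(n)\alpha)$, and an analogous standard Weyl system $(Y,\nu,S)$ of step $W(Q)$ with $b_n=e(-h(n)\alpha)$ (using $-h\in\ws(Q)$ since $\ws(Q)$ is an $\R$-vector space); both are totally ergodic by \cref{connected}. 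Then $a_n b_n\equiv 1$, so $\lim_N \frac{1}{N}\sum_n a_n b_n = 1$, while Weyl equidistribution gives $\bar{a} = \bar{b} = 0$, so the sequences are not asymptotically uncorrelated. The hardest point is the rank argument in the forward direction: handling potential $\Q$-linear dependencies among $\{1,\beta_i,\gamma_j\}$ requires the combined use of $\ws(P)\cap \ws(Q)=\{0\}$ and total ergodicity of both systems.
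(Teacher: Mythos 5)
Your argument is essentially the paper's: the paper obtains Corollary D as a ``direct consequence'' of \cref{standard-decomposition} and \cref{orthogonality-in-Weyl}, i.e.\ it expands the multicorrelation sequences into polynomial phases with frequencies in $\ws(P)$ and $\ws(Q)$ and kills the cross terms by Weyl equidistribution; your forward direction unpacks exactly this (your $\Q$-basis $\{1,\delta_1,\dots,\delta_u\}$ computation is the multi-frequency analogue of the step inside the proof of \cref{orthogonality-in-Weyl} where a possible rational relation between $\alpha$ and the $\beta_j$ is resolved), and your converse is the paper's \cref{Weyl-Poly-alternative-def} applied to $h$ and $-h$. The approximation bookkeeping and the rank argument for $(r_{i,k})$ are both sound.

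One caveat deserves flagging. Your claim that every element of $\ws(P)$ has zero constant term is only valid when the $p_i$ themselves have zero constant term: for $P=\{n,2n+1\}$ one has $-1=2\cdot n-(2n+1)\in\ws_1(P)\subseteq\ws(P)$. This matters precisely at the step where you pass from ``$R_k+S_k$ is constant'' to ``$R_k=-S_k\in\ws(P)\cap\ws(Q)=\{0\}$'': without zero constant terms one only gets $R_k=c_k-S_k$, and the intersection hypothesis cannot be invoked. This is not really a defect of your proof but of the statement: for $P=\{n\}$ and $Q=\{1-n\}$ one has $\ws(P)\cap\ws(Q)=\{0\}$, yet the sequences $e(n\alpha)$ and $e((1-n)\alpha)$ are realizable as multicorrelation sequences on irrational rotations and are perfectly correlated, so the corollary as literally stated needs the (implicit, and used everywhere else in the paper) hypothesis that the polynomials have zero constant term. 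With that hypothesis added, your proof is complete and matches the paper's route.
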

It can be shown that \cref{teo-D} only requires one of $(X,\mu,T)$ or $(Y,\nu,S)$ to be totally ergodic. However, the statement fails if neither is totally ergodic, as the following example reveals.
\begin{ex}
    Consider the group $\Z_2=\{0,1\}$ and the transformation $T:\Z_2\to \Z_2$ given by $T(x)=x+1\text{ mod } 1$. For $P=\{n\}$ and $Q=\{n^2\}$ we notice that for each $n\in \N$
    $$ \int \ind{\{1\}} (x)\cdot T^n\ind{\{1\}} (x) dm_{\Z_2}(x)= \frac{1}{2}\ind{2\N}(n)$$
    and 
    $$ \int \ind{\{0\}} (x)\cdot T^{n^2}\ind{\{1\}} (x) dm_{\Z_2}(x)= \frac{1}{2}\ind{2\N+1}(n).$$
    Consequently,
    $$\lim_{N\to \infty}\frac{1}{N}\sum_{n=1}^N \int \ind{\{1\}} (x)\cdot T^n\ind{\{1\}} (x) dm_{\Z_2}(x) = \frac{1}{4} $$
    and 
    $$\lim_{N\to \infty}\frac{1}{N}\sum_{n=1}^N  \int \ind{\{0\}} (x)\cdot T^{n^2}\ind{\{1\}} (x) dm_{\Z_2}(x) = \frac{1}{4}. $$
    Nonetheless
    $$\lim_{N\to \infty}\frac{1}{N}\sum_{n=1}^N \left(\int \ind{\{1\}} (x)\cdot T^n\ind{\{1\}} (x) dm_{\Z_2}(x)\right) \left( \int \ind{\{0\}} (x)\cdot T^{n^2}\ind{\{1\}} (x) dm_{\Z_2}(x)\right)=0.$$
\end{ex}
We strongly believe that \cref{orthogonality-in-Weyl} can be generalized to all systems and \cref{teo-D} to all totally ergodic systems. However, some technical complications arise when trying to lift these result to nilsystems. Hence, we conjecture the following.
\begin{conj}\label{conjecture-correlation-sequences}
Let $k \in \mathbb{N}$ and let $(X,\mu,T)$ be an invertible measure preserving system. Let $P=\{p_1,\ldots,p_r\}$ be a set of essentially distinct integral polynomials, $h$ an integral polynomial such that $h\notin \ws(P)$, and $\alpha\in \R\setminus \Q$. If $g: \mathbb{T} \rightarrow \mathbb{C}$ is Riemann integrable and $f_0,f_1, \ldots, f_{r} \in L^{\infty}(\mu)$, then 
$$
\begin{aligned}
 \lim_{N\to \infty} \frac{1}{N} \sum_{n=1}^N \left( g(h(n)\alpha)- \int_\T g d\lambda\right) \int f_0T^{p_1(n)} f_1  \cdots  T^{p_r(n)} f_{r} d\mu=0.
\end{aligned}
$$
\end{conj}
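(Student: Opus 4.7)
The plan is to reduce \cref{conjecture-correlation-sequences} to a nilsystem analogue of \cref{standard-decomposition} and then finish by Weyl equidistribution, following the architecture of the proof of \cref{orthogonality-in-Weyl}. By ergodic decomposition (and dominated convergence, since the integrand is bounded by $2\|g\|_\infty \prod \|f_i\|_\infty$ uniformly in $N$ and in the ergodic component), we may assume $(X,\mu,T)$ is ergodic. Next, by \cref{Finiteness-of-HK} the averages $\frac{1}{N}\sum T^{p_1(n)}f_1 \cdots T^{p_r(n)}f_r$ are $L^2$-controlled by an inverse limit of $d$-step nilfactors for some $d$. A standard $L^2$ approximation argument then lets us assume that $(X,\mu,T)$ is itself an ergodic $d$-step nilsystem with continuous $f_i$'s. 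Finally, using the finite partition of $X$ into its connected components together with the reparametrization $n \mapsto mn+i$ as in the second half of the proof of \cref{orthogonality-in-Weyl}, and invoking \cref{some-properties-weyl-poly} to transfer the hypothesis $h \notin \ws(P)$, we further reduce to the totally ergodic case.

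Granted the reductions, the key input would be a structural statement paralleling \cref{standard-decomposition}: for each $f_0,\ldots,f_r$ there exist $(c_l) \in \ell^2(\N)$ and integral polynomials $\{q_{l,j}\} \subseteq \ws(P)$ such that the multicorrelation sequence is uniformly approximated by
\[
\sum_{l=1}^L c_l \cdot e\bigl(q_{l,1}(n)\beta_1+\cdots+q_{l,s}(n)\beta_s\bigr),
\]
where $\beta=(\beta_1,\ldots,\beta_s)$ is the rotation on the Kronecker factor. With this in hand, I would finish exactly as in \cref{orthogonality-in-Weyl}: use Stone--Weierstrass to approximate $g - \int_\T g \, d\lambda$ uniformly by nonconstant trigonometric polynomials, reduce by linearity to $g(t) = e(kt)$ with $k\ne 0$, and absorb $k$ into $h$. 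For each $l$, the polynomial $h(n)\alpha + \sum_j q_{l,j}(n)\beta_j$ must have at least one irrational non-constant coefficient, since otherwise the same rational-dependence computation as in \cref{orthogonality-in-Weyl} forces $h \in \ws(P)$, contradicting the hypothesis. Weyl equidistribution kills each of the finitely many surviving terms as $N\to\infty$, while $\ell^2$-summability of $(c_l)$ controls the tail uniformly in $n$.

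The main obstacle, and the only essential one, is establishing the structural expansion in the second step for arbitrary ergodic nilsystems with frequencies \emph{forced to lie in} $\ws(P)$. In the Weyl case, \cref{standard-decomposition} was obtained by an explicit Fourier expansion on $\T^{sd}$, which is unavailable once the underlying Lie group is genuinely non-abelian: by Bergelson--Host--Kra and Leibman, the multicorrelation sequence is known only to be a polynomial nilsequence, and extracting from such a nilsequence an honest polynomial Fourier series on the Kronecker factor with frequencies in $\ws(P)$ is precisely what \cite[Conjecture 11.4]{Leibman10b} provides. As the author already notes in connection with \cref{teo-D}, proving \cref{conjecture-correlation-sequences} unconditionally seems to require either Leibman's conjecture or substantial new arithmetic control of polynomial nilsequences beyond the matrix combinatorics of $\Lambda_k(P)$ developed in \cref{sec3}.
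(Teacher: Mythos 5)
You have correctly recognized that \cref{conjecture-correlation-sequences} is stated as a conjecture and is established in the paper only conditionally on Leibman's \cref{Lebiman-conjecture}; your reduction steps (ergodic decomposition with dominated convergence, passage to the characteristic factor via \cref{Finiteness-of-HK}, approximation by a single ergodic $d$-step nilsystem, and Stone--Weierstrass to reduce $g$ to a nonconstant character with $k$ absorbed into $h$) all match the paper's argument. Where you diverge is in how Leibman's conjecture is actually deployed. You frame the missing ingredient as a structural ``polynomial Fourier expansion'' of the nilsequence analogous to \cref{standard-decomposition}, with frequencies forced into $\ws(P)$. The paper never produces such an expansion for genuine nilsystems. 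Instead, it recasts the desired vanishing as a statement that the augmented orbit sequence $\{(T^{m}x,\,T^{m+p_1(n)}x,\ldots,T^{m+p_r(n)}x,\,h(n)\alpha)\}_{m,n}$ is uniformly distributed in $Y_x\times\T$, where $Y_x$ is the orbit closure of the first $r+1$ coordinates. \cref{Lebiman-conjecture} is used precisely to identify $Y_x$ as the connected nilmanifold $\hat H\Gamma^{r+1}$; \cref{Leibman-equidistribution} then reduces uniform distribution on this connected nilmanifold to equidistribution on its maximal abelianized factor $G/([G_0,G_0]\Gamma)$; and \cref{F-K} identifies that factor as a unipotent affine system on a torus, i.e.\ a Weyl system, where \cref{orthogonality-in-Weyl} finishes the proof. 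So the conjecture enters not as a Fourier-analytic structure theorem but as an orbit-closure identification feeding an equidistribution criterion, and the ``arithmetic control'' you worry about is outsourced entirely to the already-proven Weyl-system case. Your sketch is therefore an honest and essentially accurate diagnosis of why the statement remains conjectural, but it stops short of the conditional argument the paper actually carries out, and the mechanism you propose for using Leibman's conjecture is not the one that works.
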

\begin{conj}\label{conj-D}
Let $P=\{p_1,\ldots,p_r\}$ and $Q=\{q_1,\ldots,q_l\}$ sets of essentially distinct integral polynomials. Then $$\ws(P)\cap \ws(Q)=\{0\}$$ if and only if, for every totally ergodic systems $(X,\mu,T)$ and $(Y,\nu,S)$, and functions $f_0,\ldots,f_r\in L^\infty(X)$ and $g_0,\ldots,g_l\in L^\infty(Y)$, the sequences
$$ n\mapsto  \int f_0T^{p_1(n)} f_1  \cdots  T^{p_r(n)} f_{r} d\mu, \text{ and } n\mapsto \int g_0S^{q_1(n)} g_1  \cdots  S^{q_l(n)} g_{l} d\nu $$
are asymptotically uncorrelated.
\end{conj}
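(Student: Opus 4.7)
The plan is to establish the two implications of the equivalence separately. The ``only if'' direction (asymptotic uncorrelation $\Rightarrow$ trivial intersection of Weyl polynomials) is a direct construction, while the ``if'' direction I expect to prove conditionally on Leibman's Conjecture~11.4 from \cite{Leibman10b}.

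For the ``only if'' direction, I argue by contrapositive: suppose there is a nonzero $h\in \ws(P)\cap\ws(Q)$. I would take $X=Y$ to be a standard Weyl system on $\T^d$ with $d\geq \max(W(P),W(Q))$ and rotation by an irrational $\alpha$. Mimicking the construction used in the proof of \cref{Weyl-Poly-alternative-def}, I select complex characters so that the $P$-correlation sequence equals $e(h(n)\alpha)$ and the $Q$-correlation sequence equals $e(-h(n)\alpha)$. Passing to real parts to obtain genuine real-valued $L^\infty$ functions, each resulting real correlation sequence decomposes via \cref{Weyl-poly-in-standard} into a leading term proportional to $\cos(2\pi h(n)\alpha)$ plus finitely many exponentials of other Weyl polynomials. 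Weyl equidistribution kills each exponential of a nonconstant polynomial, so both individual Cesàro averages vanish, but the product Cesàro average inherits the nonzero mean $1/2$ of $\cos^2(2\pi h(n)\alpha)$, while every cross term involves a nonconstant polynomial and hence averages to zero. This contradicts asymptotic uncorrelation.

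For the ``if'' direction, I assume $\ws(P)\cap\ws(Q)=\{0\}$ and invoke the predicted analogue of \cref{standard-decomposition} for arbitrary totally ergodic systems: for every totally ergodic $(X,\mu,T)$ with Kronecker rotation $\beta\in\T^s$, one would have an $\ell^2$-approximation
\begin{equation*}
\int f_0 T^{p_1(n)} f_1 \cdots T^{p_r(n)} f_r\, d\mu \;\approx\; \sum_k c_k\, e\bigl(q_{k,1}(n)\beta_1+\cdots+q_{k,s}(n)\beta_s\bigr),
\end{equation*}
with $(c_k)\in\ell^2$ and $q_{k,j}\in\ws(P)$, and the analogous statement for $(Y,\nu,S)$ with rotation $\gamma\in\T^t$ and $q'_{k',j}\in\ws(Q)$. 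Multiplying the two expansions and Cesàro-averaging yields a double sum of averages of exponentials $e\bigl(q_k(n)\cdot\beta+q'_{k'}(n)\cdot\gamma\bigr)$. Total ergodicity forces each of $\{1,\beta_1,\ldots,\beta_s\}$ and $\{1,\gamma_1,\ldots,\gamma_t\}$ to be rationally independent; combined with $\ws(P)\cap\ws(Q)=\{0\}$, this forces the polynomial $q_k(n)\cdot\beta+q'_{k'}(n)\cdot\gamma$ to be constant only when $q_k\equiv 0$ and $q'_{k'}\equiv 0$, because any nontrivial cancellation between $\beta$- and $\gamma$-coordinates would produce a common nonzero element of $\ws(P)\cap\ws(Q)$. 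Weyl equidistribution then eliminates every cross term, leaving exactly the product of the two factored Cesàro averages.

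The principal technical obstacle will be handling rational dependencies between the Kronecker spectra of the unrelated systems $X$ and $Y$, since nothing a priori prevents $\Q$-relations between $\beta$ and $\gamma$. One must argue---using that $\ws(P)$ and $\ws(Q)$ are $\R$-vector spaces of integer-valued polynomials with trivial intersection---that such dependencies cannot combine $\ws(P)$- and $\ws(Q)$-data into a constant polynomial without forcing one side to be identically zero; this is a careful linear-algebraic bookkeeping analogous to the argument executed inside the proof of \cref{orthogonality-in-Weyl}. The deeper, unavoidable obstruction is Leibman's conjecture itself, which is what prevents the statement from being a theorem: beyond the Weyl class the required polynomial Fourier expansion is not presently known unconditionally.
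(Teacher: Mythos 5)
You should first be aware that this statement is not proved in the paper: it appears as \cref{conj-D}, an open conjecture. The paper proves only the Weyl-system case (\cref{teo-D}) unconditionally, and in \cref{sec5} it remarks that Leibman's \cref{Lebiman-conjecture} would imply \cref{conj-D} by an argument similar to the one it gives for \cref{conjecture-correlation-sequences}. So there is no proof in the paper to match your attempt against; your instinct that the statement cannot currently be established unconditionally is correct, and your ``only if'' direction is fine --- it is exactly the counterexample construction behind \cref{Weyl-Poly-alternative-def}, realized inside totally ergodic standard Weyl systems, and it needs no conjectural input. (One small caveat: if the common element $h$ of $\ws(P)\cap\ws(Q)$ happens to be a nonzero \emph{constant} polynomial --- which can occur since the polynomials here are only assumed essentially distinct, not of zero constant term --- your phases $e(\pm h(n)\alpha)$ are constant sequences and no failure of uncorrelation is produced.)

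The genuine gap is in your ``if'' direction. You base it on a ``predicted analogue of \cref{standard-decomposition} for arbitrary totally ergodic systems,'' i.e.\ an approximation of the multicorrelation sequence by finite sums $\sum_k c_k\, e(q_{k,1}(n)\beta_1+\cdots+q_{k,s}(n)\beta_s)$ with $q_{k,j}\in\ws(P)$. This intermediate statement is false beyond the Weyl class, not merely unproven: for a totally ergodic $2$-step nilsystem such as the Heisenberg system of \cref{ex-2.2}, multicorrelation sequences are genuine $2$-step nilsequences whose phases involve generalized polynomials of the type $\tfrac{n(n-1)ab}{2}-an[bn]$; such sequences have modulus bounded away from zero on a set of positive density yet are orthogonal, in the Ces\`aro sense, to every ordinary polynomial exponential $e(q(n)\beta)$, so they cannot be approximated by finite sums of such exponentials either uniformly (as in the $\liminf_L\sup_n$ formulation of \cref{standard-decomposition}) or even in the Besicovitch sense. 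Already for $r=1$ the continuous part of the spectral measure obstructs a $\sup_n$ approximation. This is precisely why the paper states the structure theorem only for Weyl systems and, for general systems, conjectures only the much weaker orthogonality statement \cref{conjecture-correlation-sequences}. The conditional route the paper actually envisions is different in kind: reduce to the nilsystem characteristic factor via \cref{Finiteness-of-HK}, use \cref{Lebiman-conjecture} to identify the orbit closure of the polynomial orbit $(T^m x, T^{m+p_1(n)}x,\ldots)$ as a connected subnilmanifold, apply Leibman's equidistribution criterion (\cref{Leibman-equidistribution}) to transfer the equidistribution question to the maximal affine factor $G/([G_0,G_0]\Gamma)$ --- which \emph{is} a Weyl system --- and only there invoke \cref{orthogonality-in-Weyl}. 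Your argument would need to replace the posited global Fourier expansion by this orbit-closure/equidistribution reduction (handling the non-affine components of \emph{both} correlation sequences, which is where the real difficulty of the conjecture lies and which your proposal does not address). Your remark about rational dependencies between the two Kronecker spectra is a real but secondary issue; it is handled, as you suggest, by the linear-algebra step inside the proof of \cref{orthogonality-in-Weyl}.
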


\section{Recurrence in Weyl systems}\label{sec4}
In this section we prove \cref{teo-A}. We start by giving sufficient conditions for polynomial recurrence in products of standard Weyl systems via recurrence in the Kronecker factor of such systems. In the sequel, we extend these sufficient conditions to all Weyl system. For this, we will need to reduce to the connected case in order to use \cref{FKLemma} to lift the conditions from standard Weyl systems. Then, for proving that this condition is indeed necessary we show that every set of $P$-topological recurrence in Weyl systems is a set of $Q$-topological recurrence in Kronecker systems, where $Q$ is a basis of Weyl polynomials associated to $P$. This closes the series of implications as every set of $P$-measurable recurrence is a set of $P$-topological recurrence. 

\subsection{Sufficient conditions for measurable polynomial recurrence}
In this subsection we prove that for a given a product standard Weyl system $(X,\mu,T)$ and a set of integral polynomials $P=\{p_1,\ldots,p_r\}$ with zero constant term, if a set $R\subseteq \Z\setminus\{0\}$ is a set of $\ws(P)$-recurrence for the Kronecker factor for $(X,\mu,T)$, then $R$ is a set of $P$-measurable recurrence for $(X,\mu,T)$. The meaning of being a set of recurrence for an infinite set of polynomials is defined next.
\begin{defn}
   Given a set of real polynomials $P$ (not necessarily finite) and a class of systems $\sC$, we will say that $R$ is a set of $P$-measurable recurrence for the class $\sC$ of systems if for every finite subset of integral polynomials $Q\subseteq P$, $R$ is a set of $Q$-measurable recurrence for the class $\sC$.
\end{defn}

We now introduce all the machinery that we need from IP limits. Following \cite{Furstenberg81,Furstenberg_Katznelson78,Bergelson96} a set $S\subseteq \N$ is called \textit{IP} if there is an infinite sequence $(x_n)_{n\in \N}\subseteq \N$ such that $S$ contains all finite sums of $(x_n)_{n\in \N}\subseteq \N$:
$$\left\{ \sum_{i\in \alpha} x_i \mid  \alpha\in \mathcal{F}  \right\}\subseteq S,$$
where $\mathcal{F}$ denotes the nonempty finite subsets of $\N$. An \textit{IP}$^*$ set is a set which has nonempty intersection with every IP set. For $\alpha,\beta\in \sF$ we write $\alpha<\beta$ if $\max(\alpha)<\min(\beta)$. We define an \textit{IP-ring} as
$$\sF'=\left\{\bigcup_{i\in \beta} \alpha_i \mid \beta\in \sF \right\},$$
where $(\alpha_i)_{i\in \N}$ is an increasing sequence of elements in $\sF$. Observe that each IP-ring $\sF'$ is in bijection to $\sF$ via $\varphi:\sF\to \sF'$, $\varphi(\beta)=\bigcup_{i\in \beta}\alpha_i$. Thus, any element indexed by $\sF'$ can be seen as an element indexed by $\sF$. For an $\sF$-sequence $(x_\alpha)_{\alpha\in \sF}$ in a topological space $X$, let $x\in X$ and let $\sF'$ be an IP-ring. One writes
$$\iplim_{\alpha\in \sF'} x_\alpha =x $$
if for any neighborhood $U$ of $x$ there exists $\alpha_U\in \sF'$ such that for any $\alpha\in \sF'$ with $\alpha>\alpha_U$ one has $x_\alpha\in U$.

We will use a special case of a notable result from Bergelson and McCutcheon on uniform bounds in recurrence along IP limits, which we state now.
\begin{theo}[Cf. {\cite[Theorem 1.3]{Bergelson_McCutcheon00}}]\label{theo-1.3}
    Let $(n_\alpha)_{\alpha\in \sF}$ be an IP-set and $\{p_1,\ldots,p_r\}$ a set of integral polynomials with zero constant term. Let $(X,\sB,\mu,T)$ be a measure preserving system. For every $A\in \sB$ with $\mu(A)>0$, there exist an IP-ring $\sF'$ and a real number $c>0$ such that
    \begin{equation*}
       \iplim_{\alpha\in \sF'}\mu(A\cap T^{-p_1(n_\alpha)}A \cap \cdots \cap T^{-p_r(n_\alpha)}A) \geq c.
    \end{equation*}
\end{theo}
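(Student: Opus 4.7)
The plan is to combine a Hindman-type extraction of IP-limits with a polynomial recurrence argument in the spirit of Bergelson and McCutcheon. First I would exploit compactness to secure the existence of the IP-limit, then I would reduce the positivity of that limit to the polynomial IP Szemer\'edi theorem.

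The initial step is extraction of an IP-limit. The $\sF$-sequence
$$a_\alpha := \mu\bigl(A\cap T^{-p_1(n_\alpha)}A \cap \cdots \cap T^{-p_r(n_\alpha)}A\bigr), \qquad \alpha \in \sF,$$
takes values in the compact interval $[0,1]$. A standard consequence of Hindman's theorem is that any $\sF$-sequence in a compact metric space admits an IP-limit along some IP-subring. Applying this would yield an IP-subring $\sF_0 \subseteq \sF$ along which $L := \iplim_{\alpha \in \sF_0} a_\alpha$ exists. The theorem then reduces to showing $L > 0$: once this is known, taking $c := L/2$ and passing to a further IP-subring $\sF' \subseteq \sF_0$ on which $a_\alpha \geq L/2$ eventually gives $\iplim_{\alpha \in \sF'} a_\alpha \geq c$.

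The substantive content is therefore the strict positivity of $L$, and I would establish it by PET induction on the tuple $(p_1,\ldots,p_r)$. One equips tuples of integral polynomials with zero constant term with a well-founded PET weight; the inductive step applies an IP version of the van der Corput lemma, which after differencing replaces the averaging over $(p_1,\ldots,p_r)$ with averaging over a tuple of strictly smaller PET weight, at the cost of passing to a further IP-subring. The base case of linear polynomials $p_i(n)=c_i n$ reduces, via the closure of IP-sets under integer scaling, to the Furstenberg--Katznelson IP Szemer\'edi theorem, which supplies the required positive lower bound.

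The main obstacle is the iterated passage to IP-subrings inside the PET induction: each reduction step demands a new extraction, and these extractions must be threaded together in a Milliken--Taylor fashion so that the final positive bound is not destroyed when the layers of the induction are unwound. This is precisely the technical core of \cite{Bergelson_McCutcheon00}, so in practice I would invoke their Theorem 1.3 as a black box, since its conclusion is exactly the statement required here.
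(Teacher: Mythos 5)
Your proposal ultimately does exactly what the paper does: the paper gives no proof of this statement, importing it as a special case of Bergelson--McCutcheon's Theorem~1.3, and you likewise invoke that theorem as a black box after a reasonable sketch of how its proof (compactness plus Hindman for existence of the IP-limit, PET induction with an IP van der Corput lemma reducing to the Furstenberg--Katznelson IP Szemer\'edi theorem) would go. So the approaches coincide and there is nothing to correct.
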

The reason why this results becomes useful to us is the necessity of controlling simultaneously the frequencies given by \cref{Hidden-freq}. For this, we will need that the set of natural numbers $n\in \N$ in which such frequencies are near to $0$ to define an IP$^*$-set, which is given by the following proposition.
\begin{prop}[Cf. {\cite[Theorem 2.19]{Furstenberg81}}]\label{IPstarness}
Let $q_1,\ldots,q_r$ be integral polynomials with zero constant term. Then for each $\alpha\in \R^d$ and $\epsilon>0$ the set
$$\{n\in \N \mid \norm{q_i(n)\alpha}_{\T^d}<\epsilon ,\forall i\in \{1,\ldots,r\}\} $$
is $IP^*$.
\end{prop}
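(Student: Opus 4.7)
The plan is to derive this from Theorem~\ref{theo-1.3} applied to a specific Kronecker rotation on $\T^d$, extracting the set-theoretic content from the measure-theoretic IP-recurrence conclusion via the triangle inequality on $\T^d$.

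Concretely, I would consider the measure-preserving system $(\T^d, m_{\T^d}, T)$ with $T(x) = x + \alpha$ and the set $A = \{x \in \T^d : \|x\|_{\T^d} < \epsilon/2\}$, which has positive Haar measure. To show that the target set is IP$^*$, I would fix an arbitrary IP set $(n_\gamma)_{\gamma \in \sF}$ generated by some sequence $(x_i)_{i \in \N}$ and need to produce an element of this IP set inside $\{n \in \N : \|q_i(n)\alpha\|_{\T^d} < \epsilon,\ \forall i\in [r]\}$. Applying Theorem~\ref{theo-1.3} to this IP set and the chosen system yields an IP-ring $\sF' \subseteq \sF$ and a constant $c > 0$ with
$$\iplim_{\gamma \in \sF'} m_{\T^d}\bigl(A \cap T^{-q_1(n_\gamma)}A \cap \cdots \cap T^{-q_r(n_\gamma)}A\bigr) \geq c.$$

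Next, I would fix any $\gamma \in \sF'$ far enough out that the above intersection has measure at least $c/2 > 0$, so in particular is nonempty. Any point $x$ in the intersection satisfies $\|x\|_{\T^d} < \epsilon/2$ and $\|x + q_i(n_\gamma)\alpha\|_{\T^d} < \epsilon/2$ for every $i \in \{1,\ldots,r\}$, so the triangle inequality on $\T^d$ forces $\|q_i(n_\gamma)\alpha\|_{\T^d} < \epsilon$. Since $\gamma \in \sF' \subseteq \sF$, the integer $n_\gamma$ is a finite sum of the generating sequence $(x_i)$ and therefore belongs to the original IP set, while also lying in the target set. As the IP set was arbitrary, the target set meets every IP set, so it is IP$^*$.

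I anticipate no substantial technical obstacle: Theorem~\ref{theo-1.3} does all of the heavy lifting, and the content of this proposition is essentially a translation of simultaneous measurable recurrence on a torus rotation into simultaneous polynomial equidistribution of the orbit of~$0$. The only delicate piece of bookkeeping is verifying that $n_\gamma$ (indexed over an IP-ring) still counts as a finite sum of the original generating sequence $(x_i)$, but this is immediate from the definition of an IP-ring $\sF' \subseteq \sF$ recalled above.
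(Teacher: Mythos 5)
Your argument is correct, but it is not the paper's route: the paper gives no proof of \cref{IPstarness} at all, citing it as a known Diophantine fact (cf.\ Furstenberg's Theorem 2.19 and its polynomial extensions), which is classically established by comparatively elementary means (IP limits of unitary operators on $L^2(\T^d)$, or Hindman's theorem combined with the algebraic structure of Bohr$_0$ sets). You instead deduce it from \cref{theo-1.3}, the Bergelson--McCutcheon IP polynomial multiple recurrence theorem, applied to the rotation $x\mapsto x+\alpha$ on $\T^d$ with $A=B(0,\epsilon/2)$; the triangle-inequality step and the observation that every $\gamma\in\sF'$ is itself an element of $\sF$, so that $n_\gamma$ is a finite sum of the generators, are both correct, and the hypotheses of \cref{theo-1.3} (zero constant terms, positive measure) are satisfied. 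Within the logical framework of this paper your derivation is valid, since \cref{theo-1.3} is imported as a black box and is not proved using \cref{IPstarness}. The trade-off is one of economy: you invoke a far stronger theorem than the statement requires, and in the broader literature the Diophantine fact you are proving is essentially the base case fed into the compact-extension machinery behind the IP Szemer\'edi theorem, so if one tracked dependencies outside this paper the argument would risk circularity. The citation-based route avoids that risk and keeps the proposition at its natural (Kronecker-level) depth, while your route has the merit of making the deduction self-contained given the tools already quoted in \cref{sec4}.
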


\begin{cor}\label{cor-IP-prop}
   Let $\{p_1,\ldots,p_r\}$ and $\{q_m\}_{m\in \N}$ be sets of integral polynomials with zero constant term, and $(\beta_m)_{m\in \N}\subseteq \T$. Let $(X,\sB,\mu,T)$ be a measure-preserving system. Then, for every $A\in \sB$ with $\mu(A)>0$ there are $c>0$, an IP-set $(n_{\alpha})_{\alpha}$, and an IP-ring $\sF'$ such that
    \begin{equation*}
       \iplim_{\alpha\in \sF'}\mu(A\cap T^{-p_1(n_\alpha)}A \cap \cdots \cap T^{-p_r(n_\alpha)}A) \geq c
    \end{equation*}
    and for every $m\in \N$
    \begin{equation*}
         \iplim_{\alpha\in \sF'} ||q_{m}(n_\alpha)\beta_m|| =0.
    \end{equation*}
 
\end{cor}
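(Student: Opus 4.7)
The plan is to combine the measure-theoretic recurrence provided by \cref{theo-1.3} with the IP$^*$-property of \cref{IPstarness} via a countable diagonal argument. Fix any IP-set $(n_\alpha)_{\alpha\in\sF}$ in $\N$ (for instance $n_\alpha=\sum_{i\in\alpha}i!$) and apply \cref{theo-1.3} to obtain an IP-ring $\sF_0$ and a constant $c>0$ such that
\[
\iplim_{\alpha\in\sF_0}\mu\bigl(A\cap T^{-p_1(n_\alpha)}A\cap\cdots\cap T^{-p_r(n_\alpha)}A\bigr)\geq c.
\]
Since an IP-limit is inherited by every sub-IP-ring, the measure-theoretic part of the conclusion will hold along any further refinement of $\sF_0$; it remains to refine $\sF_0$ so as to control the rotations $\|q_m(n_\alpha)\beta_m\|$ for all $m$ simultaneously.

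To this end, enumerate $\N\times\N$ as a sequence $(m_l,k_l)_{l\geq 1}$ in which every pair appears infinitely often, and construct recursively a nested sequence of IP-rings $\sF_0\supseteq\sF_1\supseteq\sF_2\supseteq\cdots$ with
\[
\|q_{m_l}(n_\alpha)\beta_{m_l}\|<1/k_l\qquad\text{for every }\alpha\in\sF_l.
\]
To produce $\sF_l$ from $\sF_{l-1}$, note that by \cref{IPstarness} the set $S_l=\{n\in\N:\|q_{m_l}(n)\beta_{m_l}\|<1/k_l\}$ is IP$^*$. Partition the elements of $\sF_{l-1}$ according to whether $n_\alpha\in S_l$; by Hindman's theorem some part contains a sub-IP-ring, and this part cannot be the complement of $S_l$, since otherwise the corresponding values of $n_\alpha$ would constitute an IP-set in $\N$ disjoint from $S_l$, contradicting the IP$^*$-property. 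Thus the "$n_\alpha\in S_l$" part contains the desired $\sF_l$.

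Finally, produce the target IP-ring $\sF'$ by a diagonal construction. Writing the generators of $\sF_l$ as $(\gamma^{(l)}_i)_{i\in\N}$, choose $\delta_l\in\sF_l$ inductively with $\delta_l>\delta_{l-1}$ so that for every $j\leq l$ the representation $\delta_l=\bigcup_{i\in B^{(j)}_l}\gamma^{(j)}_i$ (which exists because $\sF_l\subseteq\sF_j$) satisfies $\min B^{(j)}_l>\max B^{(j)}_{l-1}$; this is possible because each $\sF_l$ is an infinite IP-ring inside each $\sF_j$. Let $\sF'$ be the IP-ring generated by $(\delta_l)_{l\geq 1}$. By construction, every $\alpha\in\sF'$ of the form $\bigcup_{l\in A}\delta_l$ with $\min A\geq L$ lies in $\sF_L$. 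Consequently, for each $m\in\N$ and $\epsilon>0$, picking $k$ with $1/k<\epsilon$ and $l$ with $(m_l,k_l)=(m,k)$, all sufficiently late elements of $\sF'$ lie in $\sF_l$, yielding $\|q_m(n_\alpha)\beta_m\|<\epsilon$ eventually; hence $\iplim_{\alpha\in\sF'}\|q_m(n_\alpha)\beta_m\|=0$. Since $\sF'\subseteq\sF_0$, the measure IP-limit along $\sF'$ remains at least $c$.

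The main technical obstacle is the diagonal step: the $\delta_l$ must be chosen so that the IP-ring they generate sits inside every $\sF_L$ at once, which forces one to keep simultaneous track of the representations of each $\delta_l$ in infinitely many generator systems $(\gamma^{(j)}_i)_i$. This is standard in IP-Ramsey theory but is the step that requires genuine bookkeeping; everything else reduces to invoking \cref{theo-1.3} and \cref{IPstarness} once the right refinement scheme is set up.
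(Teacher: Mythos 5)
Your proof is correct and uses the same two ingredients as the paper (\cref{theo-1.3} and \cref{IPstarness}), differing only in order: you apply Bergelson--McCutcheon to an arbitrary IP-set first and then refine the resulting IP-ring to control the rotations, whereas the paper first extracts an IP-set and IP-ring from the IP$^*$ sets $\{ n : \lVert q_m(n)\beta_m\rVert \le 1/k \text{ for all } m\le M\}$ and only then invokes \cref{theo-1.3}; both orders work because \cref{theo-1.3} applies to any IP-set and IP-limits persist along sub-IP-rings. Your explicit Hindman-plus-diagonalization refinement spells out a step the paper leaves implicit, so no gap is introduced.
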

\begin{proof}
    Define for each $k,M\in \N$ the set
    \begin{equation*}
        \mathcal{A}_{k,M}:=\{ n\in \N \mid \forall m\leq M, \norm{q_m(n)\beta_m}\leq \frac{1}{k}\}.
    \end{equation*}
    By \cref{IPstarness} the set $\mathcal{A}_{k,M}$ is IP$^*$. In particular, there are an IP-set $(n_\alpha)_{\alpha}$ and an IP-ring $\sF'$ such that for each $m\in \N$
    \begin{equation*}
           \iplim_{\alpha\in \sF'} ||q_{m}(n_\alpha)\beta_m|| =0.
    \end{equation*}
    In consequence, by \cref{theo-1.3} we can find an IP-ring $\sF^{''}$ such that 
        \begin{equation*}
       \iplim_{\alpha\in \sF^{''}}\mu(A\cap T^{-p_1(n_\alpha)}A \cap \cdots \cap T^{-p_r(n_\alpha)}A) \geq c,
    \end{equation*}
    and for every $m\in \N$
    \begin{equation*}
         \iplim_{\alpha\in \sF^{''}} ||q_{m}(n_\alpha)\beta_m|| =0.
    \end{equation*}
\end{proof}
Now we put the all ingredients together to get the desired sufficient conditions for polynomial recurrence in standard Weyl systems.
\begin{theo}\label{sufficient-conditions}
    Let $R\subseteq \Z\setminus\{0\}$ and $P=\{p_1,\dots,p_r\}$ be a set of integral polynomials with zero constant term. Then, for each product of $d$-step standard Weyl system $(X,\mu,T)$ if $R$ is a set of $\ws_d(P)$-recurrence for the Kronecker factor of $(X,\mu,T)$, then $R$ is a set of $P$-measurable recurrence for $(X,\mu,T)$.
\end{theo}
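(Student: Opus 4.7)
The plan is to leverage the polynomial Fourier-type expansion given by \cref{standard-decomposition} to translate measurable recurrence in $(X,\mu,T)$ into the simultaneous recurrence of finitely many Weyl-polynomial frequencies in the Kronecker factor. Fix $A\in \sB$ with $\mu(A)>0$ and any $\epsilon\in (0,\mu(A)/2)$. Applying \cref{standard-decomposition} with $f_0=f_1=\cdots=f_r=\one_A$ furnishes some $L=L(\epsilon)\in \N$, scalars $c_1,\dots,c_L\in \C$ (the initial segment of an $\ell^2$ sequence with $\sum_l|c_l|^2\le \mu(A)^{r+1}\le 1$), and integral polynomials $\{q_{l,j}\}_{l\le L,\,j\le s}\subseteq \ws_d(P)$ such that
$$\sup_{n\in \Z}\Bigl|\,\mu\bigl(A\cap T^{-p_1(n)}A\cap\cdots\cap T^{-p_r(n)}A\bigr)\,-\,\sum_{l=1}^{L}c_l\, e\bigl(\textstyle\sum_{j=1}^{s}q_{l,j}(n)\beta_j\bigr)\Bigr|<\epsilon/3.$$

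I would then calibrate by evaluating at $n=0$. Since each $p_i$ has zero constant term, $T^{-p_i(0)}=\mathrm{id}$ and the multicorrelation at $n=0$ is $\mu(A)$. The matrix $\Lambda_d(P)$ vanishes at $n=0$ (as $\binom{0}{k}=0$ for $k\ge 1$), so every element of $\ws_d(P)$, and hence each $q_{l,j}$, has zero constant term; consequently $e(\sum_j q_{l,j}(0)\beta_j)=1$ and $|\sum_{l=1}^{L}c_l-\mu(A)|<\epsilon/3$. The finite collection $Q=\{q_{l,j}\}_{l,j}$ lies in $\ws_d(P)$ and consists of integral polynomials, so the hypothesis on $R$ gives $Q$-recurrence along $R$ for the Kronecker factor $(\T^s,m_{\T^s},x\mapsto x+\beta)$. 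Applying that recurrence to a small open ball around $0\in \T^s$ produces, for any prescribed $\delta>0$, some $n\in R$ with $\lVert q_{l,j}(n)\beta_j\rVert_\T<\delta$ for every $l,j$. For such $n$, $|e(\sum_j q_{l,j}(n)\beta_j)-1|\le 2\pi s\delta$ for each $l$, and Cauchy--Schwarz gives $|\sum_{l=1}^{L}c_l(e(\cdots)-1)|\le 2\pi s\delta\sqrt{L}$, which falls below $\epsilon/3$ once $\delta<\epsilon/(6\pi s\sqrt{L})$. Chaining the three $\epsilon/3$ estimates yields
$$\mu\bigl(A\cap T^{-p_1(n)}A\cap\cdots\cap T^{-p_r(n)}A\bigr)\,>\,\mu(A)-\epsilon\,>\,0,$$
which is the desired $P$-measurable recurrence.

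The conceptual heart of the argument is the simultaneous control of the finitely many Weyl-polynomial frequencies $q_{l,j}(n)\beta_j$ at some $n\in R$, and this is exactly what the Kronecker recurrence hypothesis is built to provide; the main verification items are that the $q_{l,j}$ produced by \cref{standard-decomposition} are integral and vanish at $0$, both of which follow from inspecting the construction $q=\mathbf{v}^{T}\Lambda_d\mathbf{e}_1$ with $\mathbf{v}\in \Z^{dr}$ coming from the character expansion. The only technical bookkeeping is the reconciliation of the $\ell^2$ bound on $(c_l)$ with the $\ell^1$-type error arising from the individual $e(\cdots)-1$ terms, which is handled by Cauchy--Schwarz once $L$ is frozen. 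An alternative route would instead invoke \cref{cor-IP-prop} to produce an IP-set along which the multicorrelation is bounded below by some $c>0$ \emph{and} all Weyl-polynomial frequencies tend to $0$; in that approach the residual obstacle becomes locating an element of the IP-set inside $R$, which is once again supplied by the Kronecker recurrence assumption, so the two strategies converge on the same essential use of the hypothesis.
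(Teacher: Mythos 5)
Your overall architecture --- expand the multicorrelation via \cref{standard-decomposition}, bound the ``constant term'' $\sum_l c_l$ away from zero, then use the Kronecker recurrence of $R$ to find $n\in R$ at which all the retained Weyl frequencies are small --- is the same as the paper's. However, the step where you bound $\sum_{l=1}^{L}c_l$, namely the calibration at $n=0$, does not work, and it is exactly the step where the paper has to do real work. The expansion of \cref{standard-decomposition} retains only those character tuples $(v^0,\ldots,v^r)$ for which $\int \psi_{v^0}T^{p_1(n)}\psi_{v^1}\cdots T^{p_r(n)}\psi_{v^r}\,d\mu$ is nonzero for all (equivalently, all but finitely many) $n$, i.e.\ those with $v^0=-\sum_k v^k$ \emph{and} $\mathbf{v}\in \Z^r\times \Rspa(\Lambda_{d-1})^{\perp}$. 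At $n=0$ the orthogonality conditions collapse to the single relation $\sum_{k}v_i^k=0$, so \emph{every} tuple with $v^0=-\sum_k v^k$ contributes, and the sum over all of them is exactly $\int \ind{A}^{\,r+1}d\mu=\mu(A)$. The persistent tuples form a proper sublattice of these, so in general $\sum_l c_l$ is strictly smaller than $\mu(A)$: already for $P=\{n\}$ and $d=2$ one computes $\sum_l c_l=\norm{\E(\ind{A}\mid Z_1)}_2^2$ with $Z_1$ the Kronecker factor, which equals $\mu(A)$ only when $\ind{A}$ is $Z_1$-measurable. Hence your inequality $|\sum_{l=1}^{L}c_l-\mu(A)|<\epsilon/3$ is false, and the final chain of estimates collapses. (This is also why the theorem assumes $R\subseteq \Z\setminus\{0\}$ and why \cref{standard-decomposition} takes the supremum over $n\in\N$ rather than $n\in\Z$: the expansion is not valid at $n=0$, where the non-persistent frequencies all line up.)

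What your argument genuinely needs is a positive lower bound on $|\sum_{l=1}^{L}c_l|$, and this is precisely what the paper extracts from the Bergelson--McCutcheon IP recurrence theorem: \cref{cor-IP-prop} produces an IP-ring along which the multicorrelation stays $\geq \delta$ \emph{and} all finitely many truncated Weyl frequencies tend to $0$, whence $|\sum_{l=1}^{M}c_l|\geq \delta-2\epsilon$. Your closing sketch of the ``alternative route'' also misreads the role of the IP-set: one never needs to locate an element of the IP-set inside $R$; the IP-set serves only to certify the lower bound on $|\sum_l c_l|$, after which the Kronecker recurrence hypothesis on $R$ is invoked independently to produce $n\in R$ with all frequencies small. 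Replacing your $n=0$ calibration by this IP step (and accepting the weaker conclusion $\geq \delta/2-2\epsilon$ in place of $\mu(A)-\epsilon$) turns your proposal into the paper's proof.
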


\begin{proof}
We denote $T=S_1\times\cdots\times S_{s}$ such that for each $j=1,\ldots, s$, $(\T^{d},m_{\T^{d}} ,S_j)$ is a standard Weyl system with 
\begin{equation*}
    S_j=(x_1+\beta_j,x_2+x_1,...,x_{d}+ x_{d-1}).
\end{equation*}

Let $A$ be a measurable set of positive measure on $X$. Using \cref{standard-decomposition} with $f_0=\cdots=f_k=\ind{A}$ we obtain there are a sequence $(c_l)_{l\in \N}\in \ell^2(\N)$ with $$\norm{(c_l)_{l\in \N} }_{\ell^2(\N)}\leq\prod_{k=0}^{r}||f||_2\leq 1$$ and integral polynomials $\{q_{l,j}\}_{l\in \N,j\in[s]}\subseteq\ws_d(P)$ such that if $\epsilon>0$ then there is $M\in \N$ with
    \begin{equation}\label{approximation-with-Weyl-frequencies}
       \left|\mu(A\cap T^{-p_1(n)}A\cap \cdots T^{-p_r(n)}A) -\sum_{l =1}^M c_l \cdot e\left(q_l(n)^T\beta\right)\right|< \epsilon,~ \forall n\in \N.
    \end{equation}

Using \cref{cor-IP-prop} we find $\delta>0$ and $n\in \N$ such that $\mu(\bigcap_{k=0}^r T^{-p_k(n)}A)\geq \delta$ and choosing $0<\epsilon<\delta/4$, we get $||q_l(n)^T\beta|| \leq  \epsilon/(2\pi M)$ for every $l\in \{1,\ldots, M\}$, where $M\in \N$ comes from \cref{approximation-with-Weyl-frequencies} for such $\epsilon$. In this way, for such an $n\in \N$, we get the upper bound
 \begin{align*}
      \mu(\bigcap_{k=0}^r T^{-p_k(n)}A)  &\leq \epsilon  + \left|\sum_{l =1}^Mc_l \cdot e\left(q_l(n)^T\beta\right)\right|\\
      &\leq \epsilon+ \sum_{l =1}^M|c_l| \cdot |e\left(q_l(n)^T\beta\right)-1| +\left|\sum_{l =1}^Mc_l \right|\\
        &\leq \epsilon+ \frac{\epsilon}{M}\sum_{l =1}^M|c_l|  +\left|\sum_{l =1}^Mc_l \right|\\
      &\leq 2\epsilon+\left|\sum_{l =1}^Mc_l  \right|,
 \end{align*}
from which we conclude 
\begin{equation*}
  \left|\sum_{l =1}^M c_l \right| \geq \delta - 2\epsilon \geq \delta/2 .
\end{equation*}
 
That being so, if $R$ is a set of $\ws_d(P)$-recurrence for the Kronecker factor, we can take $n \in R$ such that $||q_l(n)^T\beta|| \leq  \epsilon/(2\pi M)$ for every $l\in\{1,\ldots,M\}$, yielding 
\begin{align*}
 \mu(\bigcap_{k=0}^r T^{-p_k(n)}A)  & \geq  -\epsilon  + |\sum_{l =1}^Mc_l \cdot e(q_l(n)^T\beta)|\\
    &\geq -\epsilon + \Big| \sum_{l =1}^Mc_l \Big|-\Big|\sum_{l =1}^Mc_l \cdot \Bigg(e\Big(q_l(n)^T\beta\Big)-1\Bigg) \Big| \\
    &\geq -2\epsilon+ \delta/2>0.
 \end{align*}
 Therefore, $R$ is a set of $P$-measurable recurrence for $(X,\mu,T)$. 
\end{proof}
\begin{remark}
    As $\ws_d(\{p_1,\ldots,p_r\})$ has a basis of at most $r$ integral polynomials $\{q_i\}_{i\in I}$, then we can replace $\ws_d(P)$-recurrence with $\{q_i\}_{i\in I}$-recurrence in the statement of \cref{sufficient-conditions}.
\end{remark}

\subsection{Characterization of topological and measurable recurrence}
Now we prove the main theorem of this section, which will give \cref{teo-A} as a corollary.

\begin{theo}\label{Equivalences-Weyl}
    Let $d,r\in \N$, $R\subseteq \Z$, and $P=\{p_1,...,p_r\}$ a set of integral polynomials with zero constant term. Then, the following are equivalent:
    \begin{enumerate}
        \item $R$ is a set of $P$-measurable recurrence for $d$-step Weyl systems.
        \item $R$ is a set of $P$-topological recurrence for $d$-step Weyl systems.
        \item $R$ is a set of $\ws_d(P)$-recurrence for Kronecker systems. 
    \end{enumerate}
\end{theo}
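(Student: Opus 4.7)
The plan is to prove the cycle $(1) \Rightarrow (2) \Rightarrow (3) \Rightarrow (1)$. The implication $(1) \Rightarrow (2)$ is immediate from \cref{measure-implies-topological}, since every ergodic Weyl system is uniquely ergodic (see \cref{ex-2.2}) and therefore minimal.

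For $(3) \Rightarrow (1)$, \cref{sufficient-conditions} directly yields $P$-measurable recurrence for every product of $d$-step standard Weyl systems from hypothesis (3), because the Kronecker factor of such a product is itself a Kronecker system. To pass to an arbitrary connected $d$-step Weyl system, the plan is to apply \cref{FKLemma}, which realizes any such system as a factor of a product of standard Weyl systems, and then observe that measurable recurrence is inherited by factors. For the general, possibly non-connected, case, I would let $X_0$ be the connected component of the identity in $X$ and $m \geq 1$ minimal with $T^m X_0 = X_0$; by \cref{connected}, the restriction $(X_0, \mu_{X_0}, T^m)$ is totally ergodic, and the periodic decomposition $\mu = \tfrac{1}{m}\sum_{j=1}^m T^j\mu_{X_0}$, used as in the proof of \cref{orthogonality-in-Weyl}, reduces $P$-recurrence on $X$ to recurrence on $X_0$ along the reindexed polynomials $\{(p_j(mn+i)-p_j(i))/m\}_j$. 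The stability of the Weyl polynomial space under this substitution, recorded in \cref{some-properties-weyl-poly}(2), ensures that the hypothesis (3) for $P$ transfers to the analogous hypothesis needed on $(X_0, \mu_{X_0}, T^m)$.

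For $(2) \Rightarrow (3)$, the main new step, fix a finite set $Q = \{q_1,\ldots,q_l\}$ of integral polynomials in $\ws_d(P)$ and a Kronecker system. By Pontryagin duality and projection to finite-dimensional characters, it suffices to handle ergodic rotations on $\T^s$ by $\tau = (\tau_1,\ldots,\tau_s)$ and to produce, for each $\epsilon > 0$, some $n \in R$ with $\|q_j(n)\tau_k\| < \epsilon$ for all $j,k$. Since the entries of $\Lambda_{d-1}(P)$ are polynomials with rational coefficients, $\Rspa(\Lambda_{d-1}(P))^\perp$ admits a rational basis, so one can find a positive integer $M$ and integer vectors $\mathbf{v}_j \in \Z^r \times \bigl(\Z^{(d-1)r} \cap \Rspa(\Lambda_{d-1}(P))^\perp\bigr)$ with $\xi(\mathbf{v}_j) = Mq_j$. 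Choose $\gamma_k \in \T$ with $M\gamma_k = \tau_k$ and $\{1,\gamma_1,\ldots,\gamma_s\}$ rationally independent, and let $W$ be the product of $s$ standard $d$-step Weyl systems with base rotations $\gamma_1,\ldots,\gamma_s$. The crucial identity, derived from the discussion preceding the definition of Weyl polynomials, reads
\[
\mathbf{v}_j^T\bigl(T^{p_1(n)}y - y,\ldots,T^{p_r(n)}y - y\bigr)^T = \xi(\mathbf{v}_j)(n)\gamma_k = Mq_j(n)\gamma_k = q_j(n)\tau_k \text{ mod } 1
\]
in each of the $s$ standard factors, independent of $y$. Applying $P$-topological recurrence of $R$ on $W$ to a ball of radius small relative to $\max_j \|\mathbf{v}_j\|_1$ produces $n \in R$ and $y \in W$ with every $\|T^{p_i(n)}y - y\|$ correspondingly small, and the identity then forces $\|q_j(n)\tau_k\| < \epsilon$ as required. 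I expect this $(2) \Rightarrow (3)$ step to be the main obstacle, specifically the management of the denominator $M$: the natural integer preimage of $q_j$ under $\xi$ only recovers an integer multiple $Mq_j$, so one must compensate by constructing $W$ from the rescaled rotations $\tau_k/M$, and ensuring this freedom is the reason for reducing to $\T^s$ via Pontryagin duality.
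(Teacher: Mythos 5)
Your proposal is correct and follows essentially the same route as the paper: the cycle $(1)\Rightarrow(2)\Rightarrow(3)\Rightarrow(1)$, using \cref{measure-implies-topological}, the orbit identity for $\Lambda_d$ applied to a small ball in a product of standard Weyl systems for $(2)\Rightarrow(3)$, and \cref{FKLemma} together with \cref{sufficient-conditions} after reducing to the connected component for $(3)\Rightarrow(1)$. The only substantive difference is in $(2)\Rightarrow(3)$, where you clear denominators by taking an integer preimage of $Mq_j$ and rescaling the base rotations to $\tau_k/M$, whereas the paper keeps a rational preimage $v$ and absorbs its denominators into a constant $C(Q)$; your variant is the more careful of the two, since pairing a coordinate vector that is only small modulo $1$ with a non-integer rational vector controls the result merely up to a rational with denominator dividing that of $v$.
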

\begin{proof}[Proof of \cref{Equivalences-Weyl}]
($\textbf{1.}\Longrightarrow \textbf{2.}$) This implication is a special case of \cref{measure-implies-topological}.

($\textbf{2.}\Longrightarrow \textbf{3.}$) Let $Q=\{q_1,\ldots,q_r\}$ a basis of integral polynomials of $\ws_d(P)$. Let $R\subseteq \N$ be a set of $P$-topological recurrence for Weyl Systems and $\beta=(\beta_1,\ldots,\beta_s)\in \T^s$. We need to prove that for each $\epsilon>0$ there is $n\in R$ such that for each $i\in \{1,\ldots,r\}$, $\norm{q_i(n)\beta}_{\T^s}<\epsilon. $ Consider the Weyl system $(\T^{sd},m,T)$ where $T=S_1\times\cdots\times S_{s}$, and for each $j=1,\ldots, s$, $(\T^{d},m_{\T^{d}} ,S_j)$ is a standard Weyl system with 
\begin{equation*}
    S_j=(x_1+\beta_j,x_2+x_1,...,x_{d}+ x_{d-1}).
\end{equation*}

Denote $U=B_{\T^{sd}}(0,\epsilon)\subseteq \T^{sd}$, where we use the maximum norm in $\T^{s\cdot  d}$ (i.e. $x\in U$ if and only if $||x_i||_{\T}<\epsilon$ for each $i\in \{1,\ldots,sd\}$). Hence, there is infinitely large $n\in R$ such that
\begin{equation*}
  U\cap T^{-p_1(n)} U\cap \cdots \cap T^{-p_r(n)}U\neq \emptyset.  
\end{equation*}
Consequently, there is $x\in U$ such that 
\begin{equation*}
  \left( \begin{array}{c}
         x \\
         \vdots \\
        x
    \end{array}\right) + \left[  \Lambda_{d}\left( \begin{array}{c}
       \beta_j \\
          x_{j,1}\\
         \vdots \\
         x_{j,d-1}
    \end{array}\right) \right]_{j=1}^{s}  = \left( \begin{array}{c}
         T^{p_1(n)} \\
         \vdots \\
         T^{p_k(n)}
    \end{array}\right)    \left( \begin{array}{c}
         x \\
         \vdots \\
        x
    \end{array}\right) \in B_{\T^{(sd) \cdot k}}(0,\epsilon),
\end{equation*}
and so, for every $j\in \{1,\ldots,s\}$ we have that 
\begin{equation*}
    \Lambda_{d}\left( \begin{array}{c}
        \beta_j \\
          x_{j,1}\\
         \vdots \\
         x_{j,d-1}
    \end{array}\right)  \in B_{\T^{d\cdot k}}(0,2\epsilon).
\end{equation*}
For each $i\in \{1,\ldots,r\}$ there is $v\in \Q^{d\cdot k}$ such that $(q_i(n),0,\ldots,0)= v^T\Lambda_{d}(n)$, which gives that for each $j\in \{1,\ldots,s\}$
\begin{equation*}
q_i(n)\beta_j= (q_i(n),0,\ldots,0) \left( \begin{array}{c}
        \beta_j \\
          x_{j,1}\\
         \vdots \\
         x_{j,d-1}
    \end{array}\right) = 
    v^T\Lambda_d(n)\left( \begin{array}{c}
        \beta_j \\
          x_{j,1}\\
         \vdots \\
         x_{j,d-1}
    \end{array}\right) \in B_{\T}(0,C\epsilon),
\end{equation*}
for a constant $C>0$ that only depends on $v$, and thus, only depending on $Q$. We conclude that there is $n\in R$ such that for every $i\in \{1,\ldots,r\}$,
\begin{equation*}
    ||q_i(n)\beta||_{\T^s}\leq C(Q)\epsilon
\end{equation*}
which provides the desired conclusion by replacing $\epsilon$ by $\epsilon/C(Q)$.

($\textbf{3.}\Longrightarrow \textbf{1.}$) Let $(X=G/\Gamma,\mu,T)$ be a $d$-step Weyl system and $R\subseteq \Z\setminus\{0\}$ a set of $\ws_d(P)$-recurrence for Kronecker systems. We know by \cite[Corollary 7 and 8 in Chapter 11]{Host_Kra_nilpotent_structures_ergodic_theory:2018} that there is $l\in \N$ such that if $X_0=G_0\Gamma/\Gamma$ is the connected component of $e_X:=e_G\Gamma$ in $X$ where $e_G$ is the identity of $G$, then $\{T^{i}X_0\}_{i=1}^l$ partitions $X$. 

Let $h(n)=ln$. As $R$ is a set of $\ws_d(P)$-recurrence for Kronecker system, by $\textbf{4.}$ in \cref{general-props} we have that $R/l:=\{n\in\N \mid ln\in R\}$ is a set o $\ws_d(P\circ h) $-recurrence for Kronecker systems. As the polynomials in $P\circ h$ are always divisible by $l$, we can define the set of integral polynomials $P'=\{p_1\circ h/l,\ldots, p_r\circ h /l\}$ which generates the same $d$-Weyl polynomials as $P\circ h$ by \cref{some-properties-weyl-poly} part \textbf{2.}.

If we prove that $R/l$ is a set of $P'$-measurable recurrence for $(X_0,\mu_{X_0},T^l)$, then $R$ will be a set of $P$-measurable recurrence for $(X,\mu,T)$. In fact, let $A\in\mathcal{B}(X)$ be a set with positive measure. There is $j\in \{1,\ldots,l\}$ such that $A \cap T^{j}X_0$ has positive $\mu$ measure. Thus, by $T$-invariance of the measure $\mu$, $A_j:=T^{-j}A \cap X_0$ has positive measure for $\mu_{X_0}$. In consequence, if $R/l$ is a set of $P'$-measurable recurrence for $(X_0,\mu_{X_0},T^l)$, there is $n\in R/l$ such that $$\mu_{X_0}(A_j\cap T^{-l(p_1(ln)/l)}A_j\cap \cdots \cap T^{-l(p_k(ln)/l)}A_j)=\mu_{X_0}(A_j\cap T^{-p_1(ln)}A_j\cap \cdots \cap T^{-p_k(ln)}A_j)>0.$$ 
This implies that for $m=ln\in R$
$$\mu(A\cap T^{-p_1(m)}A\cap \cdots \cap T^{-p_k(m)}A)>0.$$

Therefore, replacing $(X,\mu,T)$ by $(X_0,\mu_{X_0},T^l)$, $P$ by $P'$, and $R$ by $R/l$, we can assume without loss of generality that $(X,\mu,T)$ is a connected $d$-step Weyl system.

On the other hand, by \cref{FKLemma} the system $(X=\T^d,\mu,T)$ is a factor of a product of standard $d$-Weyl systems $(Y=\T^d,\mu,S)$. Hence, it is enough to prove the statement in the case $(X,\mu,T)$ is a product of standard Weyl systems. Then, by \cref{sufficient-conditions} conclude that $R$ is a set of $P$-measurable recurrence for $(X,\mu,T)$.
\end{proof}

Omitting the restriction on the step of the Weyl system and taking a basis of integral polynomials $Q=\{q_1,\ldots,q_r\}$ of $\ws(P)$ in  \cref{Equivalences-Weyl} gives \cref{teo-A}.
   \begin{cor}[\cref{teo-A}]\label{teo-2}
    Let $r\in \N$, $R\subseteq \Z$, and $P=\{p_1,...,p_r\}$ a set of integral polynomials with zero constant term. Then, there exist rationally independent polynomials $Q=\{q_1,...,q_r\}\subseteq \ws(P)$ such that the following are equivalent:
    \begin{enumerate}
        \item  $R$ is a set of $\{p_1,\ldots,p_r\}$-measurable recurrence for Weyl systems,
        \item  $R$ is a set of $\{p_1,\ldots,p_r\}$-topological recurrence for Weyl systems.
        \item $R$ is a set of $\{q_1,\ldots,q_r\}$-recurrence for Kronecker systems. 
    \end{enumerate}
\end{cor}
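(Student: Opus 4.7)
My plan is to deduce this corollary from \cref{Equivalences-Weyl} by (a) letting the step parameter $d$ grow, exploiting the stabilization $\ws_d(P) = \ws(P)$ for $d \geq W(P)$, and (b) replacing the infinite family of polynomials $\ws(P)$ by a carefully chosen finite basis $Q$ of integral polynomials.

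\smallskip

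For step (a), observe that every Weyl system is a $d$-step Weyl system for some $d \in \N$, so being a set of $P$-measurable (respectively $P$-topological) recurrence for Weyl systems is equivalent to being such for every $d$-step Weyl system, for all $d \in \N$. By \cref{Equivalences-Weyl}, each of these conditions in turn is equivalent to $R$ being a set of $\ws_d(P)$-recurrence for Kronecker systems. Since $\ws_d(P) \subseteq \ws_{d+1}(P)$ and $\ws_d(P) = \ws(P)$ for all $d \geq W(P)$ by \cref{Weyl-complexity-and-space}, this family of conditions collapses to the single condition that $R$ is a set of $\ws(P)$-recurrence for Kronecker systems. This handles the equivalences $(\mathbf{1}) \Leftrightarrow (\mathbf{2}) \Leftrightarrow$ ``$R$ is a set of $\ws(P)$-recurrence for Kronecker systems.''

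\smallskip

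For step (b), I would choose $Q$ as follows. The integral polynomials contained in $\ws(P)$ form a discrete subgroup of $\ws(P)$ (discrete because the integer-valued polynomials in $\R[n]$ form a $\Z$-lattice with basis $\{\binom{n}{k}\}_{k \geq 0}$). Since $\ws(P)$ is an $r$-dimensional $\R$-vector space admitting $r$ rationally independent integral polynomials---as remarked right after the definition of $\ws(P)$---this subgroup is a lattice of full rank $r$, and I take $Q = \{q_1,\ldots,q_r\}$ to be a $\Z$-basis of it. Rational independence of $Q$ is immediate from its $\R$-linear independence.

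\smallskip

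With this choice of $Q$, the remaining equivalence between $\ws(P)$-recurrence and $Q$-recurrence for Kronecker systems is straightforward. One direction holds since $Q$ is a finite subset of integral polynomials in $\ws(P)$. For the converse, any integral polynomial $h \in \ws(P)$ is an integer linear combination $h = \sum_{i=1}^r c_i q_i$ with $c_i \in \Z$, so on any Kronecker rotation by $\alpha \in \T^s$ one has the estimate
\[
    \|h(n)\alpha\|_{\T^s} \leq \sum_{i=1}^r |c_i| \cdot \|q_i(n)\alpha\|_{\T^s},
\]
allowing simultaneous closeness of $\|q_i(n)\alpha\|_{\T^s}$ to $0$ to propagate to any finite collection of integral polynomials in $\ws(P)$. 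The one delicate point---and the main obstacle to a wholly trivial argument---is insisting on a $\Z$-basis rather than a mere $\R$-basis of integral polynomials: a generic $\R$-basis of $\ws(P)$ made of integral polynomials would express other integral polynomials in $\ws(P)$ only as rational combinations of the basis, and distance to zero in $\T$ is not preserved under multiplication by non-integer rationals, so the above estimate would break down.
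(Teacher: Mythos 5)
Your proof is correct and follows essentially the same route as the paper, which deduces the corollary from \cref{Equivalences-Weyl} by letting the step $d$ vary and invoking the stabilization $\ws_d(P)=\ws(P)$ for $d\ge W(P)$ from \cref{Weyl-complexity-and-space}, together with the choice of a basis of integral polynomials of $\ws(P)$. Your insistence on a $\Z$-basis of the lattice of integral polynomials in $\ws(P)$ (rather than an arbitrary $\R$-basis of integral polynomials) is a sensible precaution that the paper leaves implicit: it makes the passage from $Q$-recurrence to $\ws(P)$-recurrence in Kronecker systems immediate, whereas with a general basis one would additionally have to replace the rotation $\beta$ by a point $\gamma$ with $b\gamma=\beta$ for a common denominator $b$ of the rational coordinates.
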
 
   Notice that $P$-recurrence in $W(P)$-step Weyl system suffices to ensure $P$-recurrence in Weyl systems. This allows to deduce \cref{teo-B}, generalizing the result from Host, Kra, and Maass.
   \begin{cor}[\cref{teo-B}]\label{s-implies-s+1}
       Let $r\in \N$ and $P=\{p_1,\ldots,p_r\}$ a set of integral polynomials with zero constant term. Let $s\geq W(P)$. If $R\subseteq \N$ is a set of $P$-recurrence for $s$-step Weyl systems, then it is a set of $P$-recurrence for $(s+1)$-step Weyl systems.
   \end{cor}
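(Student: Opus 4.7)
The plan is to deduce this directly from the characterization in Theorem \ref{Equivalences-Weyl} together with the stabilization of the sequence $(\ws_d(P))_{d\ge 1}$ established in Proposition \ref{Weyl-complexity-and-space}. The whole content of the corollary is that passing from step $s$ to step $s+1$ does not enlarge the associated set of Weyl polynomials, provided $s \geq W(P)$.

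First I would apply Theorem \ref{Equivalences-Weyl} with $d = s$ to rewrite the hypothesis as: $R$ is a set of $\ws_s(P)$-recurrence for Kronecker systems. Applying the same theorem with $d = s+1$, the desired conclusion is equivalent to: $R$ is a set of $\ws_{s+1}(P)$-recurrence for Kronecker systems. Since recurrence for an infinite family of polynomials is defined in terms of recurrence for each finite subfamily, the two statements will coincide as soon as we know the equality $\ws_s(P) = \ws_{s+1}(P)$ of $\R$-vector spaces.

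For this last step I would invoke Proposition \ref{Weyl-complexity-and-space}, which asserts that $\ws_i(P) = \ws_{W(P)}(P) = \ws(P)$ for every $i \ge W(P)$. Applying this with $i = s$ and $i = s+1$, both of which are $\ge W(P)$ by hypothesis, gives $\ws_s(P) = \ws_{s+1}(P) = \ws(P)$, and the chain of equivalences closes.

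There is essentially no obstacle; the work has already been done in Theorem \ref{Equivalences-Weyl} and Proposition \ref{Weyl-complexity-and-space}. One minor point worth noting in the write-up is that Theorem \ref{Equivalences-Weyl} simultaneously characterizes measurable and topological $P$-recurrence in $d$-step Weyl systems by the same condition on the Kronecker factor, so the argument is insensitive to which of the two notions of recurrence is meant by the statement, and the same proof delivers both versions at once.
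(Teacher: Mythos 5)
Your proposal is correct and is exactly the argument the paper intends: the corollary is deduced from \cref{Equivalences-Weyl} applied at steps $d=s$ and $d=s+1$, combined with the stabilization $\ws_s(P)=\ws_{s+1}(P)=\ws(P)$ for $s\geq W(P)$ from \cref{Weyl-complexity-and-space}. Your observation that the characterization handles measurable and topological recurrence simultaneously also matches the paper's framing.
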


\begin{remark}\label{remark-theo-B}
    The condition $s\geq W(P)$ in \cref{s-implies-s+1} cannot be weakened. Indeed, by \cref{Weyl-complexity-and-space} there is an integral polynomial $q\in \ws(P)$ such that $q\notin \ws_{W(P)-1}(P)$. Therefore, for an irrational number $\alpha\in \R$ we can define 
    $$R=\left\{ n\in \N \mid \norm{q(n)\alpha }>\frac{1}{4} \right\}, $$
    which is a set of recurrence for $(W(P)-1)$-step Weyl systems but not for $W(P)$-Weyl systems by \cref{orthogonality-in-Weyl}. 
\end{remark}
Lastly, the following corollary describes how the Weyl polynomials completely control recurrence within Weyl systems.
\begin{cor}\label{characterization-of-recurrence-Weyl}
    Let $P=\{p_1,...,p_r\}$ and $Q=\{q_1,...,q_l\}$ two systems of polynomials. Then, $\ws(P)\subseteq \ws(Q)$ if and only if every set of $Q$-recurrence in Weyl systems is a set of $P$-recurrence in Weyl systems. 
    In particular, $P$-recurrence in Weyl systems and $Q$-recurrence in Weyl systems are in general position if and only if $\ws(P)\setminus \ws(Q)\neq \emptyset$ and $\ws(Q)\setminus \ws(P)\neq \emptyset$.
\end{cor}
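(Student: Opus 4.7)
The main tool is \cref{teo-2}, which identifies both $P$- and $Q$-recurrence in Weyl systems with recurrence along rationally independent integral bases of $\ws(P)$ and $\ws(Q)$ in Kronecker systems. Under this reduction, the equivalence becomes: $\ws(P)\subseteq \ws(Q)$ if and only if recurrence for an integral basis of $\ws(Q)$ in Kronecker systems implies recurrence for an integral basis of $\ws(P)$ in Kronecker systems. This reframing turns the forward direction into a linear-algebra transfer and the converse into a concrete construction via \cref{orthogonality-in-Weyl}.

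For the forward direction, fix integral bases $P'=\{p_i'\}$ of $\ws(P)$ and $Q'=\{q_j'\}$ of $\ws(Q)$ supplied by \cref{teo-2}. Since both are integer-valued and $\ws(P)\subseteq \ws(Q)$, each $p_i'$ lies in the $\Q$-span of $Q'$, so there exists a common denominator $N\in \N$ with $Np_i'=\sum_j a_{ij}q_j'$ and $a_{ij}\in \Z$. Given a rotation $\alpha$ on a torus $\T^d$ and $\epsilon>0$, lift $\alpha$ to some $\alpha'\in\T^d$ with $N\alpha'=\alpha$ (coordinatewise division by $N$), apply the hypothesized $Q'$-recurrence in Kronecker systems at $\alpha'$ with threshold $\delta=\epsilon/\max_i\sum_j|a_{ij}|$, and use $\|p_i'(n)\alpha\|=\|\sum_j a_{ij}\,q_j'(n)\alpha'\|$ to conclude $P'$-recurrence in Kronecker systems. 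By \cref{teo-2} this yields $P$-recurrence in Weyl systems.

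For the converse, suppose $\ws(P)\not\subseteq\ws(Q)$ and pick an integral polynomial $h\in \ws(P)\setminus\ws(Q)$, which exists because $\ws(P)$ admits an integral basis. Fix an irrational $\alpha$ and set
$$R=\{n\in\N:\|h(n)\alpha\|_\T>1/4\}.$$
Since $h\notin\ws_d(Q)$ for every $d$ (by the stabilization in \cref{Weyl-complexity-and-space}), applying \cref{orthogonality-in-Weyl} to the zero-mean Riemann integrable function $g=\ind{(1/4,3/4)}-1/2$ gives
$$\lim_{N\to\infty}\frac{1}{N}\sum_{n=1}^N\ind{R}(n)\int f_0 T^{q_1(n)}f_1\cdots T^{q_l(n)}f_l\,d\mu=\frac{1}{2}\lim_{N\to\infty}\frac{1}{N}\sum_{n=1}^N\int f_0 T^{q_1(n)}f_1\cdots T^{q_l(n)}f_l\,d\mu$$
in any Weyl system. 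Taking $f_0=\cdots=f_l=\ind{A}$ and invoking the Bergelson--Leibman polynomial Szemerédi theorem makes the right-hand side strictly positive, forcing infinitely many positive correlations along $R$, so $R$ is $Q$-recurrent in Weyl systems. On the other hand, if $R$ were $P$-recurrent in Weyl systems, \cref{teo-2} would force $\{h\}$-recurrence in the Kronecker system $(\T,m_\T,x\mapsto x+\alpha)$; testing against a small interval $A$ immediately contradicts the definition of $R$.

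The in-particular clause is then formal: general position means neither implication between the two recurrence classes holds, which through the main equivalence corresponds to the simultaneous failure of $\ws(P)\subseteq\ws(Q)$ and $\ws(Q)\subseteq\ws(P)$. The main subtlety is the forward direction's rational change of basis, where one must verify that an integral basis of $\ws(P)$ is a $\Q$-linear combination of an integral basis of $\ws(Q)$ so that the common-denominator scaling is available; this is automatic because the transition matrix between two integer-valued bases of a $\Q$-rational subspace has rational entries.
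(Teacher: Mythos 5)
Your proof is correct and follows essentially the route the paper intends: the paper leaves this corollary unproved as an immediate consequence of \cref{teo-2}/\cref{Equivalences-Weyl}, and your converse construction $R=\{n:\|h(n)\alpha\|>1/4\}$ with \cref{orthogonality-in-Weyl} plus the polynomial Szemer\'edi theorem is exactly the argument the paper deploys in \cref{remark-theo-B} and in Section 5. The only remark is that your common-denominator basis change in the forward direction can be bypassed: \cref{Equivalences-Weyl} phrases condition 3 as $\ws_d(P)$-recurrence for Kronecker systems, i.e.\ recurrence for every finite integral subset of $\ws_d(P)$, so $\ws(P)\subseteq\ws(Q)$ gives the implication directly from the definition.
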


\begin{ex}\label{illustrative-ex}
Let $a,b,c\in \N$ different positive integers. By a simple computation, we get: 
  \begin{itemize}
      \item $W(an,bn,cn)=3$ and $\ws(an,bn,cn)=\{ c_1n+c_2n^2+c_3n^3 : c_1,c_2,c_3\in \R \}$,
      \item $W(an,bn,cn^3)=2$ and $\ws(an,bn,cn^3)=\{ c_1n+c_2n^2+c_3n^3 : c_1,c_2,c_3\in \R \}$,
      \item $W(an,bn^2,cn^3)=1$ and $\ws(an,bn^2,cn^3)=\{ c_1n+c_2n^2+c_3n^3 : c_1,c_2,c_3\in \R \}$, and
      \item $W(n,2n,cn^2)=3$ and $\ws(n,2n,cn^2)=\{ c_1n+c_2n^2+c_3(n^3-\frac{n^4}{2c}) : c_1,c_2,c_3\in \R \}$.
  \end{itemize}
    In particular, we see that the recurrence schemes associated to $(an,bn,cn)$, $(an,bn,cn^3)$, and $(an,bn^2,cn^3)$ in Weyl systems are equivalent, but they are in general position with respect to $(n,2n,cn^2)$ in Weyl systems. Moreover, it is not hard to see that given $\alpha\in [0,1)$ the sets 
    \begin{equation*}
       \Big\{n \mid ||n^3\alpha||>\frac{1}{4}\Big\}, \text{ and } \Big\{n \mid || \left(n^3-\frac{n^4}{2c}\right)\alpha||>\frac{1}{4}\Big\},  
    \end{equation*}
are sets of $(n,2n,cn^2)$-recurrence and $(an,bn,cn)$-recurrence in Weyl systems respectively, that are not sets of $(an,bn,cn)$-recurrence and $(n,2n,cn^2)$-recurrence in Weyl systems respectively.
\end{ex}
\begin{ex}\label{ex-double-linear-implies-quadratic}
    As stated in the introduction, every set of $\{n,2n\}$-recurrence for Weyl systems is a set of $\{n^2\}$-recurrence for Weyl systems. Indeed, by \cref{characterization-of-recurrence-Weyl} it is enough to notice that
    $$\ws(\{n^2\}) = \{ cn^2 :c\in \R\} \subseteq \{ c_1n+c_2n^2 : c_1,c_2\in \R\}= \ws(\{n,2n\}).  $$
\end{ex}
It is not possible to generalize \cref{characterization-of-recurrence-Weyl} to recurrence in all systems, as Griesmer \cite{GRIESMER_2024} showed that there is a set of $\{n^2\}$-recurrence that is not a set of $\{n,2n\}$ recurrence. Nevertheless, there is still hope for the converse direction, which we conjecture true for recurrence in all system.
\begin{conj}\label{conj-general-position}
        Let $P=\{p_1,...,p_k\}$ and $Q=\{q_1,...,q_l\}$ two sets of polynomials such that $\ws(P)\setminus\ws(Q)\neq \emptyset$. Then, $Q$-measurable recurrence does not imply $P$-measurable recurrence.
      If in addition $\ws(Q)\setminus\ws(P)\neq \emptyset$ then
       $Q$-measurable recurrence and $P$-measurable recurrence are in general position.
\end{conj}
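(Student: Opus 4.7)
The plan is to generalise the Weyl-system construction in \cref{remark-theo-B} to all invertible measure-preserving systems. Fix any $q \in \ws(P) \setminus \ws(Q)$ and any irrational $\alpha \in \R \setminus \Q$, and set
$$R \;=\; \Big\{n \in \N \,:\, \norm{q(n)\alpha}_{\T} > \tfrac{1}{4}\Big\}.$$
That $R$ is not a set of $P$-measurable recurrence is already recorded in \cref{remark-theo-B}: the Fourier orthogonality supplied by \cref{orthogonality-in-Weyl}, applied to a suitable Weyl system in which $q$ appears as a Weyl polynomial of $P$, produces a measurable set $A$ of positive measure for which $n \mapsto \mu(A \cap T^{-p_1(n)}A \cap \cdots \cap T^{-p_r(n)}A)$ is asymptotically orthogonal to $n \mapsto e(k q(n)\alpha)$ for every $k \neq 0$, so the correlation vanishes in density along $R$. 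The whole weight of the conjecture therefore lies on the other direction: proving that $R$ is a set of $Q$-measurable recurrence in every system.

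For this, I would mirror the architecture of \cref{sufficient-conditions} in two stages. Stage one, conditional on Leibman's Conjecture 11.4, would upgrade \cref{standard-decomposition} from totally ergodic Weyl systems to all totally ergodic systems $(X,\mu,T)$, producing for each $A \in \sB$ of positive measure an $\ell^{2}$-summable expansion
$$\mu\Big(A \cap \bigcap_{i=1}^{l} T^{-q_i(n)}A\Big) \;=\; \lim_{L\to\infty}\sum_{j=1}^{L} c_j \, e\!\big(q_{j,1}(n)\beta_1 + \cdots + q_{j,s}(n)\beta_s\big),$$
with each $q_{j,k} \in \ws(Q)$ and $(\beta_1,\ldots,\beta_s)$ the rotation vector of the Kronecker factor of a characteristic nilfactor. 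The reduction from general ergodic systems to totally ergodic ones would follow the $G_0\Gamma$-partition argument already used at the end of \cref{orthogonality-in-Weyl}. Stage two is the IP plus joint-equidistribution step: truncating at level $L$, \cref{cor-IP-prop} applied to the finite collection $\{q_{j,k}\}_{j \le L,\, k \le s}$ yields an IP-ring $\sF'$ along which $\mu(A \cap \bigcap_i T^{-q_i(n_\beta)}A) \ge c$ while all $\norm{q_{j,k}(n_\beta)\beta_k}_{\T} \to 0$, forcing $\big|\sum_{j=1}^{L} c_j\big| \ge c/2$.

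The closing move is to produce $n \in R$ at which those same frequencies $\{q_{j,k}(n)\beta_k\}$ are simultaneously small. Here the hypothesis $q \notin \ws(Q)$ is decisive: because $\ws(Q)$ is an $\R$-vector space containing every $q_{j,k}$, no linear combination $c_0 q - \sum_{j,k} c_{j,k} q_{j,k}$ with $c_0 \neq 0$ is constant, so by Weyl's theorem the orbit $\big(q_{j,k}(n)\beta_k,\, q(n)\alpha\big)_{n \in \N}$ equidistributes on a subtorus whose projection onto the $q(n)\alpha$ coordinate is surjective. Intersecting with the $\mathrm{IP}^*$ set $\{n : \norm{q_{j,k}(n)\beta_k}_{\T} < \epsilon\}$ from \cref{IPstarness}, one obtains $n \in R$ with all the frequencies $\epsilon$-small, and evaluating the expansion at this $n$ gives $\mu(A \cap \bigcap_i T^{-q_i(n)}A) \ge c/2 - O(\epsilon) > 0$, proving $Q$-measurable recurrence along $R$.

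The principal obstacle is Stage one. The proof of \cref{standard-decomposition} rests on an explicit Fourier computation using the polynomial orbit formula for standard Weyl systems, and this has no immediate nilpotent analogue because polynomial orbits on a general nilmanifold are only generalized polynomials in the sense of Leibman. Establishing that, after integration against characters of the Kronecker factor of an arbitrary ergodic nilsystem, only honest Weyl polynomials $q \in \ws(Q)$ survive as frequencies is precisely what Leibman's Conjecture 11.4 asserts; without it or a substantive substitute controlling the Fourier-analytic structure of $\mu(A \cap \bigcap T^{-q_i(n)}A)$ on nilmanifolds, Stage two has no input to operate on. Once Stage one is granted, the second statement in the conjecture follows by applying the first symmetrically to $P$ and $Q$.
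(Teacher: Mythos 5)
Your choice of witness set and the argument that it fails to be a set of $P$-measurable recurrence both match the paper. The divergence, and the gap, is in Stage one. You ask Leibman's conjecture to deliver an upgrade of \cref{standard-decomposition} to arbitrary totally ergodic systems, i.e.\ a \emph{uniform} approximation of $n\mapsto\mu(A\cap\bigcap_i T^{-q_i(n)}A)$ by finite sums $\sum_j c_j\,e(q_{j,1}(n)\beta_1+\cdots+q_{j,s}(n)\beta_s)$ with $q_{j,k}\in\ws(Q)$. No such expansion exists in general: the characteristic factor for the $Q$-averages is an inverse limit of nilsystems, and on a non-Weyl nilsystem (e.g.\ the Heisenberg system of \cref{ex-2.2}) the multicorrelation sequence is a genuine higher-step nilsequence, which is not uniformly approximable by polynomial trigonometric sums --- that failure is exactly what \cref{ex-2.2} exhibits at the level of eigenfunctions. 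What \cref{Lebiman-conjecture} actually yields (and what the paper extracts from it) is only the spectral \emph{orthogonality} statement \cref{conjecture-correlation-sequences}: for $h\notin\ws(Q)$ the weight $g(h(n)\alpha)-\int_\T g\,d\lambda$ decorrelates in Ces\`aro average from the multicorrelation sequence. That is strictly weaker than your Stage one, and since your Stage two (the IP lower bound $\lvert\sum_{j\le L}c_j\rvert\ge c/2$ followed by pointwise evaluation of the expansion at a chosen $n$) consumes the full expansion as input, the argument has nothing to run on --- as your own closing paragraph essentially concedes.

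The paper's route shows the heavy machinery of \cref{sufficient-conditions} and \cref{cor-IP-prop} is not needed here. Writing $\ind{S}(n)=g(p(n)\alpha)$ with $g$ the Riemann-integrable indicator of $\{x:\{x\}\in[1/4,3/4]\}$ and $\int_\T g\,d\lambda=\tfrac12$, one applies \cref{conjecture-correlation-sequences} (with $h=kp\notin\ws(Q)$ for all $k\ne 0$, using that $\ws(Q)$ is an $\R$-vector space) to conclude
\[
\lim_{N\to\infty}\frac1N\sum_{n\le N}\ind{S}(n)\,\mu\Big(A\cap\bigcap_i T^{-q_i(n)}A\Big)
=\frac12\,\lim_{N\to\infty}\frac1N\sum_{n\le N}\mu\Big(A\cap\bigcap_i T^{-q_i(n)}A\Big),
\]
and the right-hand side is positive by the Bergelson--Leibman polynomial Szemer\'edi theorem. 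This gives averaging $Q$-recurrence along $S$ in every system with no structure theorem, no IP-ring, and no equidistribution step. If you want to salvage your architecture, you would need to replace Stage one by a decomposition of the multicorrelation sequence into an almost-periodic part (carrying only $\ws(Q)$-frequencies) plus an error that is merely small in density rather than uniformly --- but at that point the Ces\`aro argument above is both necessary and already sufficient.
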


\section{Leibman's conjecture and some consequences in recurrence}\label{sec5} 
In this section, we will show how a conjecture from Leibman would imply \cref{conjecture-correlation-sequences} and \cref{conj-general-position}. We remark that a similar argument would also show that Leibman's conjecture implies \cref{conj-D}.

We now describe Leibman's conjecture: Let $(X=G/\Gamma,\mu_X,T)$ be a connected nilsystem, with transformation $T$ induced by left translation by an element $\tau\in G$. As $X$ is connected, we have that $G=G_0\Gamma$ where $G_0$ is the connected component of the identity in $G$. Write $\tau=\tau_0\gamma^{-1}$, with $\tau_0\in G_0$ and $\gamma\in \Gamma.$ For $\alpha\in G_0$ define the sequence $g_\alpha(n)=(\alpha\gamma^{-1})^n \gamma^n$. Let $p_0,\ldots,p_r$ be essentially distinct integral polynomials. Define the subgroup of $G^{r}$ given by
$$ \hat{H}= \left\langle \Delta_{G_0^{r+1}}, \begin{pmatrix}
    g_\alpha(p_0(n)) \\
    \cdots \\
    g_\alpha(p_r(n)) \\
\end{pmatrix}: n\in \Z, \alpha\in G_0\right\rangle. $$

Leibman \cite{Leibman10b} conjectured the following.
\begin{conj}[{\cite[Conjecture 11.4]{Leibman10b}}]\label{Lebiman-conjecture}
    For any $d\in \N$, $\hat{H}\cap (G_0)_{d} = \hat{H}_d$.
\end{conj}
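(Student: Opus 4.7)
The plan is to establish both inclusions in the identity $\hat{H}\cap (G_0)_d = \hat{H}_d$, where on the left I read $(G_0)_d$ as the componentwise $d$-th term of the lower central series inside $G_0^{r+1}$, and on the right $\hat{H}_d$ is the $d$-th term of the lower central series of $\hat{H}$ itself (viewed as an abstract group).

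For the forward inclusion, first I would verify that every generator of $\hat{H}$ already lies in $G_0^{r+1}$. This is automatic for $\Delta_{G_0^{r+1}}$; for the polynomial orbits, expanding
\begin{equation*}
(\alpha\gamma^{-1})^n\gamma^n=\alpha\cdot(\gamma^{-1}\alpha\gamma)\cdot(\gamma^{-2}\alpha\gamma^{2})\cdots(\gamma^{-(n-1)}\alpha\gamma^{n-1})
\end{equation*}
and using normality of $G_0$ in $G$ shows that each factor, and therefore each $g_\alpha(n)$, lies in $G_0$. Hence $\hat{H}\subseteq G_0^{r+1}$, and so $\hat{H}_d\subseteq (G_0^{r+1})_d=((G_0)_d)^{r+1}$, which gives the easy inclusion $\hat{H}_d\subseteq\hat{H}\cap ((G_0)_d)^{r+1}$.

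For the reverse inclusion I would pass to the Mal'cev correspondence, identifying the connected, simply connected nilpotent Lie group $G_0$ with its Lie algebra $\mathfrak{g}_0$ and fixing a Mal'cev basis adapted to the lower central series of $\mathfrak{g}_0$. In these coordinates each map $n\mapsto g_\alpha(p_i(n))$ is an explicit polynomial map $\Z\to\mathfrak{g}_0$, tractable via the Baker-Campbell-Hausdorff formula, and the generators of $\hat{H}$ become explicit polynomial vectors. The strategy is then induction on $d$: assuming the identity for all $d'<d$, project modulo $((G_0)_d)^{r+1}$ and analyze the residual polynomial relations in the abelian quotient $((G_0)_{d-1}/(G_0)_d)^{r+1}$. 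Any element of $\hat{H}\cap ((G_0)_d)^{r+1}$ that lies in the kernel of the projection at level $d-1$ must then be realized, via the inductive hypothesis, as a product of $(d-1)$-fold commutators whose further commutation yields a $d$-fold commutator representative in $\hat{H}_d$.

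The main obstacle is the polynomial nature of the generators: unlike for free nilpotent groups, the $p_i$ can be different polynomials of different degrees, so the collection of commutator identities "available" at each stage depends in a subtle way on the algebraic relations among the binomial coefficients $p_i^{[k]}$. A detailed Hall-Petresco-type analysis of polynomial words in $G_0$, in the spirit of Leibman's earlier structure theorems for polynomial maps into nilpotent groups, seems indispensable here. The hard part will be showing that the commutator elements obtained by iterating brackets among the generators of $\hat{H}$ are abundant enough to recover \emph{every} element of $\hat{H}\cap ((G_0)_d)^{r+1}$, rather than generating only a proper subgroup; it is precisely this sharpness statement that makes the conjecture nontrivial and explains why Leibman left it open.
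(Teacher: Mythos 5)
The statement you are attempting to prove is not proved anywhere in the paper: it is Leibman's Conjecture 11.4, which the author explicitly records as open (``it remains unknown whether the conjecture holds in general'') and then uses only as a \emph{hypothesis} for the conditional results of Section 5. So there is no proof in the paper to compare against, and a complete argument would be a new theorem, not a reconstruction.

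Your forward inclusion is fine as far as it goes: the identity $(\alpha\gamma^{-1})^n\gamma^n=\alpha(\gamma^{-1}\alpha\gamma)\cdots(\gamma^{-(n-1)}\alpha\gamma^{n-1})$ together with normality of $G_0$ in $G$ does place every generator of $\hat{H}$ in $G_0^{r+1}$, and the containment $\hat{H}_d\subseteq \hat{H}\cap ((G_0)_d)^{r+1}$ follows formally. But the reverse inclusion is the entire content of the conjecture, and your proposal does not establish it. What you offer is a strategy (Mal'cev coordinates, induction on $d$, a Hall--Petresco-type analysis of polynomial words) followed by the admission that ``the hard part will be showing that the commutator elements \ldots are abundant enough to recover every element of $\hat{H}\cap((G_0)_d)^{r+1}$.'' That sentence is the conjecture restated, not an argument for it. The inductive step as described supplies no mechanism for expressing an arbitrary element of $\hat{H}$ lying deep in the lower central series as a product of commutators of the specific polynomial generators; the dependence on the algebraic relations among the $p_i^{[k]}$, which you correctly identify as the obstacle, is exactly where the known partial results (linear polynomials, linearly independent polynomials, Frantzikinakis's three-polynomial case) each require separate ad hoc arguments. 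The proposal therefore has a genuine, unclosed gap, and the statement remains a conjecture.
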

Let us consider $p_0=0$ from now onwards. This conjecture is easily verified when the polynomials $p_1,\ldots,p_r$ are linearly independent. Moreover, Leibman \cite{Leibman_polynomial_seq_groups:1998} proved that it also holds when the polynomial are linear. Other special cases have been established in the literature. For instance, Frantzikinakis \cite[Cf. Lemma 4.3]{Frantzikinakis08} proved that the conjecture holds when $r=3$, $p_1$ and $p_2$ linearly independent, and $p_3=k_1p_1+k_2p_2$ for some $k_1,k_2\in \Z$. Nevertheless, it remains unknown whether the conjecture holds in general. 

We will assume this conjecture to be true from now on. As Leibman points out in \cite[Section 11.4]{Leibman10b}, this implies that $\mu$-a.e. $x\in X$
$$\overline{ \{\tau^{m}x,\tau^{m+p_1(n)}x,\ldots, \tau^{m+p_r(n)}x \}  }_{n,m\in \Z}= \hat{H} \Gamma^{r+1}, $$
and this nilmanifold is connected. We plan to use this conjecture along with the following result.

\begin{theo}[Cf. Leibman \cite{Leibman05a}]\label{Leibman-equidistribution}
     Let $X=G / \Gamma$ be a connected nilmanifold. Consider a polynomial sequence $(g(n))_{n\in \N}$ in $G$. Define $Z=G /\left(\left[G_0, G_0\right] \Gamma\right)$ and let $\pi: X \rightarrow Z$ be the natural projection. Then for every $x \in X$, the sequence $\{g(n) x\}_{n \in \mathbb{N}}$ is uniformly distributed in $X$ if and only if $\{g(n) \pi(x)\}_{n \in \mathbb{N}}$ is dense in $Z$.
\end{theo}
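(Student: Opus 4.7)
The forward direction is immediate: the projection $\pi\colon X\to Z$ sends the Haar measure of $X$ to the Haar measure of $Z$, so uniform distribution of $(g(n)x)$ in $X$ forces uniform distribution, hence density, of $(g(n)\pi(x))$ in $Z$. For the substantive direction I would argue by contrapositive, using induction on the nilpotency step $s$ of $G_0$. Since $X$ is connected, $G=G_0\Gamma$, and because $G_0$ is characteristic in $G$ the subgroup $[G_0,G_0]$ is normal in $G$, so $Z=G/([G_0,G_0]\Gamma)$ makes sense as a compact abelian group (in fact the Kronecker factor of $X$). The base case $s=1$ is tautological: there $G_0$ is abelian, $[G_0,G_0]=\{e\}$, and $Z=X$.

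For the inductive step, introduce the intermediate factor $Y:=X/G_s$, where $G_s$ denotes the last nontrivial term of the lower central series of $G$. Since $G_s$ is central, the quotient map $X\to Y$ is a principal torus bundle with fiber $G_s/(G_s\cap \Gamma)$, and $C(X)$ decomposes according to characters $\chi$ of this vertical torus. To verify $\frac{1}{N}\sum_{n=1}^{N}f(g(n)x)\to\int_X f\,d\mu_X$ it is enough to check it on functions of the form $f(x)=\chi(x_s)\,h(\bar{x})$ with $h\in C(Y)$. The $\chi=1$ piece descends to an equidistribution statement on $Y$, which is an $(s-1)$-step nilmanifold whose Kronecker factor is again $Z$; the inductive hypothesis applied to $Y$ handles this case. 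For $\chi\neq 1$ I would apply van der Corput's inequality: the squared average is controlled by inner averages that, thanks to centrality of $G_s$, involve the derivative polynomial sequence $n\mapsto g(n+m)g(n)^{-1}$, a polynomial sequence in the $(s-1)$-step group $G/G_s$. The inductive hypothesis then detects the equidistribution of each derivative sequence on the corresponding Kronecker factor.

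The main obstacle I anticipate is the final bookkeeping step: one must propagate an obstruction present on the derivative sequences back to a horizontal obstruction of the original sequence $g(n)$ on $Z$. This is essentially the content of Leibman's structure theorem for polynomial sequences on nilmanifolds, and the natural way to organise it is via a PET-style induction on a suitable complexity invariant of the polynomial sequence, exploiting that the ``bad'' horizontal characters form a rational subgroup stable under the $\Z$-action. A cleaner alternative is to invoke Leibman's factorization theorem directly, which decomposes $g(n)$ into an equidistributed piece and a piece whose deviation is witnessed on the abelianization $Z$; this would bypass the detailed van der Corput bookkeeping and close the argument in one stroke.
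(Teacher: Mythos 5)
This theorem is not proved in the paper at all: it is imported verbatim from Leibman's work (the ``Cf.\ Leibman'' attribution), so there is no in-paper argument to compare against. Judged on its own terms, your sketch follows the standard architecture of the known proofs (Leibman's, and the quantitative Green--Tao version): the forward direction via pushforward of Haar measure is correct, the reduction to vertical characters of the central torus $G_s/(G_s\cap\Gamma)$ is the right decomposition, and van der Corput on the $\chi\neq 1$ components correctly lands you on the derivative sequences $n\mapsto g(n+m)g(n)^{-1}$ in the lower-step group.

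The genuine gap is precisely the step you defer to ``bookkeeping.'' After van der Corput and the inductive hypothesis you obtain, for each $m$ in a positive-density set, a nontrivial horizontal character $\eta_m$ of the relevant subgroup of $G\times G$ obstructing the $m$-th derivative sequence. The theorem requires a \emph{single} character of $G$, trivial on $[G_0,G_0]$, whose composition with $g(n)$ fails to equidistribute --- i.e.\ an obstruction already visible on $Z$. Converting the family $\{\eta_m\}$ into one such character is the entire analytic content of the result: one must show the $\eta_m$ can be taken independent of $m$ (pigeonholing over the countable, discrete character group and exploiting rationality of the relevant subgroups), and then that the resulting obstruction descends to the horizontal torus rather than living on some intermediate factor. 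Nothing in your sketch carries this out, and the phrase ``PET-style induction on a suitable complexity invariant'' does not identify the invariant or the induction. Your proposed shortcut --- invoking Leibman's factorization theorem --- is circular at this level: that factorization is proved by means of (and is essentially equivalent in strength to) the equidistribution criterion you are trying to establish, so citing it does not close the argument, it merely renames the theorem. Since the paper itself treats the statement as a black-box citation, the honest options are either to cite Leibman as the paper does, or to supply the character-pigeonholing argument in full; the sketch as written does neither.
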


\begin{prop}
    Assuming \cref{Lebiman-conjecture}, \cref{conjecture-correlation-sequences} holds.
\end{prop}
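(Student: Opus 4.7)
The plan is to follow the template of the proof of \cref{orthogonality-in-Weyl}, substituting the standard-Weyl ``polynomial Fourier expansion'' of \cref{standard-decomposition} with an equidistribution statement on a product nilmanifold that becomes available once \cref{Lebiman-conjecture} is assumed. First I perform the standard reductions: pass to ergodic components; invoke Host--Kra (\cref{Finiteness-of-HK}) together with an $L^2$-approximation to reduce to the case that $(X,\mu,T)$ is an ergodic nilsystem $(G/\Gamma,\mu_X,\tau\cdot)$; apply the connected-component argument from the proof of \cref{Equivalences-Weyl}, combined with part $\textbf{2.}$ of \cref{some-properties-weyl-poly} (which preserves the hypothesis $h\notin \ws(P)$ under passage to a connected-component subsystem with iterate $T^{l}$), to reduce further to the case where $X$ is connected; and finally expand $g-\int_\T g\,d\lambda$ in a uniform trigonometric approximation (justified by Riemann integrability and Stone--Weierstrass) to reduce to $g(t)=e(t)$, absorbing a nonzero Fourier frequency $k\in \Z$ by replacing $h$ with $kh$, which remains outside the $\R$-vector space $\ws(P)$.

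Next, fixing $x$ from a $\mu$-conull set on which the conclusion of \cref{Lebiman-conjecture} holds, Fubini rewrites the desired limit (pre-integration over $x$) as
\[
 \lim_{N\to\infty}\frac{1}{N}\sum_{n=1}^{N} e(h(n)\alpha)\,F\!\big(x,\tau^{p_1(n)}x,\ldots,\tau^{p_r(n)}x\big),
\]
with $F(x_0,\ldots,x_r)=f_0(x_0)\cdots f_r(x_r)$. By \cref{Lebiman-conjecture}, the closure of the orbit $\{(x,\tau^{p_1(n)}x,\ldots,\tau^{p_r(n)}x)\}_{n\in\Z}$ is a connected sub-nilmanifold $Y_x\subseteq (G/\Gamma)^{r+1}$, so the combined sequence
\[
 \Psi(n)=\big(h(n)\alpha,\,x,\tau^{p_1(n)}x,\ldots,\tau^{p_r(n)}x\big)
\]
is a polynomial orbit in the connected product nilmanifold $\T\times Y_x$. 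Applying \cref{Leibman-equidistribution}, equidistribution of $\Psi$ in $\T\times Y_x$ reduces to equidistribution of its projection to the maximal horizontal torus factor $\T\times Z_x$, which via Weyl's criterion becomes a linear-independence question on polynomial coefficients modulo $1$.

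The heart of the argument---and the step I expect to be the main obstacle---is to identify $Z_x$, together with the polynomial projection of $\Psi$ onto it, in terms of the Weyl polynomials of $P$: concretely, to show that, after a rational change of basis, the coordinates of the projection of $\Psi(n)$ onto $Z_x$ are polynomials of the form $n\mapsto \sum_j q_j(n)\beta_j$ with $q_j\in \ws(P)$, where $(\beta_j)$ are the rotation angles of the Kronecker factor of $(X,\mu,T)$. This is the natural nilsystem analogue of \cref{standard-decomposition} and would follow from a careful analysis of the group $\hat{H}$ arising in \cref{Lebiman-conjecture} (together with the description of $Z$ in \cref{Leibman-equidistribution} as $G/([G_0,G_0]\Gamma)$). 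Granting this identification, equidistribution on $\T\times Z_x$ fails only if some nonzero integer multiple of $h(n)\alpha$ coincides, modulo $\Z$, with a $\Q$-linear combination of the polynomials $\sum_j q_j(n)\beta_j$; arguing exactly as at the end of the proof of \cref{orthogonality-in-Weyl} (using either rational independence of $\{1,\alpha,\beta_1,\ldots,\beta_s\}$ or the compatibility relations that appear there in the degenerate case), such a relation would force $h\in \ws(P)$, a contradiction. Equidistribution of $\Psi$ together with $\int_\T e(t)\,dt=0$ then makes the inner average vanish for $\mu$-a.e.\ $x$, and dominated convergence yields the conclusion.
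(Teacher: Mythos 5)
Your overall skeleton matches the paper's: ergodic decomposition, reduction to $g(t)=e(kt)$ with $k$ absorbed into $h$, reduction via Host--Kra to an ergodic nilsystem, invocation of \cref{Lebiman-conjecture} to identify the orbit closure $Y_x=\hat H\Gamma^{r+1}$ as a connected nilmanifold, and \cref{Leibman-equidistribution} to push equidistribution in $Y_x\times\T$ down to the quotient by $[G_0,G_0]$. The problem is that the step you explicitly label ``the heart of the argument'' and then \emph{grant} --- identifying the horizontal torus $Z_x$ of $Y_x$ together with the projected polynomial orbit in terms of Weyl polynomials, via ``a careful analysis of the group $\hat H$'' --- is precisely where all the remaining content lies, and you give no argument for it. As written, the proof has a genuine gap: without that identification you cannot run the Weyl-criterion/rational-independence argument you sketch at the end, and analyzing $\hat H$ directly for general essentially distinct polynomials is not routine.

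The paper closes this gap differently, and more cheaply: it does not compute $Z_x$ at all. After \cref{Leibman-equidistribution} reduces the problem to the projection onto $Z=G/([G_0,G_0]\Gamma)$, it observes that $Z$ is a connected nilmanifold whose connected component of the identity in the structure group is abelian, so by \cref{F-K} the induced system on $Z$ is (conjugate to) an ergodic unipotent affine transformation on a torus, i.e.\ a Weyl system. The required vanishing of
$\frac1N\sum_{n\le N} e(h(n)\alpha)\int f_0\,T^{p_1(n)}f_1\cdots T^{p_r(n)}f_r\,d\mu_Z$
on that Weyl system is then exactly \cref{orthogonality-in-Weyl}, which is already proved and whose hypothesis $h\notin\ws_d(P)$ transfers verbatim because $\ws_d(P)$ is defined intrinsically from $P$ and not from the system. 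If you replace your ``granting this identification'' paragraph with this reduction --- quotient to $Z$, recognize $Z$ as a Weyl system via \cref{F-K}, cite \cref{orthogonality-in-Weyl} --- your argument becomes complete and coincides with the paper's.
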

\begin{proof}
We can assume without loss of generality that the system $(X,\mu,T)$ is ergodic by decomposing $\mu$ into its ergodic components and using dominated convergence theorem to extend the result from the ergodic case to the general case. 

Since $g:\T\to \C$ is Riemann integrable is enough to prove the case in which $g(t)=e^{2\pi k t}$, where $k\in \Z\setminus\{0\}$. In particular, in this case $g$ is bounded by $1$ and $\int_\T g d\lambda=0$. In addition, by replacing $h$ by $kh$ we can also assume that $k=1$.

Let $r=HK(P)$ be the Host-Kra complexity of the polynomials $P$ and denote $Z_r$ the inverse limit of $r$-step nilsystem which is characteristic for the ergodic averages over $P$. Since for each $n\in \N$ we have
    \begin{align*}
        &\Big| g(h(n)\alpha) \int f_0T^{p_1(n)} f_1 \cdot \ldots \cdot T^{p_r(n)} f_{r} d\mu -g(h(n)\alpha) \int T^{p_1(n)} \E(f_1| Z_r) \cdots  T^{p_r(n)} \E(f_{r}| Z_r) d\mu \Big|\\
        &\leq \Big|  \int f_0T^{p_1(n)} f_1 \cdots \cdot T^{p_r(n)} f_{r} d\mu -\int \E(f_0| Z_r)T^{p_1(n)} \E(f_1| Z_r) \cdot \ldots \cdot T^{p_r(n)} \E(f_{r}| Z_r) d\mu \Big|,
    \end{align*}
 we can assume without loss of generality that $X=Z_r$. Moreover, approximating by $r$-step nilsystems reveals that it suffices to prove the statement assuming that  $(X,\mu,T)$ is an $r$-step nilsystem. 

 It is enough to show that for $\mu$-a.e. $x\in X$, the sequence \begin{equation}\label{sequence-orbits}
      \{(T^{m}x,T^{m+p_1(n)}x,\ldots,T^{m+p_r(n)}x,h(n)\alpha)\}_{m,n\in \N}  
    \end{equation}
    is uniformly distributed in $Y_x\times\T$, where
      \begin{equation*}
      Y_x:=\overline{\{(T^{m}x,T^{m+p_1(n)}x,\ldots,T^{m+p_r(n)}x)\}}_{m,n\in \N}.
    \end{equation*}  
    As mentioned early, we have that $\mu$-a.e. $x\in X$, $Y_x=\hat{H}\Gamma^{r+1}$ and $Y$ is connected. Let $Z=G/[G_0,G_0]\Gamma_H$ where $\pi:X\to Z$ is the canonical projection. By \cref{Leibman-equidistribution} and \cref{Lebiman-conjecture}, it is enough to prove that the the sequence 
    $$   \{  (T^m\pi(x),T^{m+p_1(n)}\pi(x),\ldots,T^{m+p_r(n)}\pi(x),h(n)\alpha) \}_{n,m\in \N}$$
   is uniformly distributed in $\left( \hat{H}/((G_{0,2})^{r+1}\cap \hat{H})\Gamma^{r+1}\right)\times \T$. By \cref{F-K} we can assume that $Z=\T^d$ and $T$ is an ergodic unipotent affine transformations on $\T^d$. In this case, it enough to prove that for $f_0,\ldots,f_r\in L^\infty(Z)$ we have 
    $$\lim_{N\to \infty}\frac{1}{N}\sum_{n=1}^N e(h(n)\alpha)\cdot \int f_0(z)f_1(T^{p_1(n)}z)\cdots f_r(T^{p_r(n)}z) d\mu_Z(z)= 0, $$
    which follows from \cref{orthogonality-in-Weyl}.
\end{proof}
Now we are in position to show how \cref{Lebiman-conjecture} implies \cref{conj-general-position}.
\begin{prop}
     Assuming \cref{Lebiman-conjecture}, \cref{conj-general-position} holds.
\end{prop}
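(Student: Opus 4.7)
The plan is to mimic the single-polynomial construction hinted at in \cref{remark-theo-B}: pick a Weyl-polynomial separator $h \in \ws(P) \setminus \ws(Q)$ (which exists by hypothesis), fix any $\alpha \in \R \setminus \Q$, and set
\[
R := \left\{ n \in \N : h(n)\alpha \bmod 1 \in \left[\tfrac{1}{4}, \tfrac{3}{4}\right] \right\}.
\]
I will then verify two things: (a) $R$ is a set of $Q$-measurable recurrence, and (b) $R$ is not a set of $P$-measurable recurrence. The general-position clause of the conjecture follows by symmetrically swapping $P$ and $Q$ when $\ws(Q)\setminus\ws(P)\neq\emptyset$ and repeating the argument.

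For (a), fix any invertible measure-preserving system $(Y, \nu, S)$ and any measurable $B$ with $\nu(B) > 0$. The function $g = \ind{[1/4, 3/4]}$ is Riemann integrable on $\T$ with mean $1/2$, and by construction $h \notin \ws(Q)$. Applying \cref{conjecture-correlation-sequences} (available under \cref{Lebiman-conjecture} by the preceding proposition) with $f_0 = \cdots = f_l = \ind{B}$ yields
\begin{equation*}
\lim_{N \to \infty} \frac{1}{N} \sum_{n=1}^N \left(\ind{R}(n) - \tfrac{1}{2}\right) \nu\!\left(B \cap S^{-q_1(n)}B \cap \cdots \cap S^{-q_l(n)}B\right) = 0.
\end{equation*}
The polynomial Szemer\'edi theorem of Bergelson and Leibman guarantees that the Ces\`aro averages of the correlation sequence have strictly positive $\liminf$, so the Ces\`aro average of $\ind{R}(n) \cdot \nu(B \cap \cdots)$ is bounded below by a positive constant. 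In particular there exist $n \in R$ for which the intersection has positive $\nu$-measure, establishing $Q$-measurable recurrence.

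For (b), suppose for contradiction that $R$ were $P$-measurable recurrent. Then \cref{measure-implies-topological} shows it is $P$-topologically recurrent on Weyl systems, and the implication $(2)\Rightarrow(3)$ of \cref{Equivalences-Weyl} would force $R$ to be $\{q'_1, \ldots, q'_r\}$-recurrent for Kronecker systems, where $\{q'_1, \ldots, q'_r\}$ is an integral basis of $\ws(P)$. Writing $h = c_1 q'_1 + \cdots + c_r q'_r$ with real coefficients and setting $C = 1 + \sum_i |c_i|$, applying this recurrence to the Kronecker rotation by $\alpha$ on $\T$ with $\varepsilon < 1/(4C)$ produces $n \in R$ with $\norm{q'_i(n)\alpha} < \varepsilon$ for every $i$, whence $\norm{h(n)\alpha} < C\varepsilon < 1/4$, contradicting the defining condition $n \in R$.

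The only genuinely hard input is \cref{conjecture-correlation-sequences}, which \cref{Lebiman-conjecture} supplies via the previous proposition; from there the argument reduces to the characterization already established in \cref{Equivalences-Weyl} together with the polynomial Szemer\'edi theorem. The one subtlety is ensuring that the Ces\`aro average of the correlation sequence has strictly positive $\liminf$ in an arbitrary invertible measure-preserving system, which is standard and follows from the ergodic decomposition combined with Bergelson-Leibman applied to each ergodic component.
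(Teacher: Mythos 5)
Your construction and overall strategy coincide with the paper's: the same separating set $R=\{n:\{h(n)\alpha\}\in[1/4,3/4]\}$ for $h\in\ws(P)\setminus\ws(Q)$, the same use of \cref{conjecture-correlation-sequences} with $g=\ind{[1/4,3/4]}$ plus the polynomial Szemer\'edi theorem to get $Q$-recurrence, and the non-$P$-recurrence via failure in Weyl systems (which the paper simply asserts and you flesh out through \cref{Equivalences-Weyl}). One step in your part (b) is not quite right as written: from $\norm{q'_i(n)\alpha}<\varepsilon$ for all $i$ and $h=\sum_i c_iq'_i$ with \emph{real} (in general non-integer) coefficients you cannot conclude $\norm{h(n)\alpha}<(\sum_i|c_i|)\varepsilon$, since the distance-to-nearest-integer norm satisfies $\norm{cx}\le|c|\,\norm{x}$ only for integer $c$ (e.g.\ $\norm{x}=0$ does not force $\norm{x/2}=0$). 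The fix is immediate: the paper's definition of $\ws(P)$-recurrence quantifies over every finite set of integral polynomials contained in $\ws(P)$, and $h$ itself is such a polynomial, so apply Kronecker recurrence directly to $\{h\}$ and the rotation by $\alpha$ to produce $n\in R$ with $\norm{h(n)\alpha}<1/4$, the desired contradiction. With that one-line repair your argument is correct and essentially identical to the paper's.
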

\begin{proof}
Let $p\in \ws(P)\setminus\ws(Q)$ and $\alpha\in \R\setminus\Q$ and define
\begin{equation*}
S:=\Big\{ n\in \N \mid \{p(n)\alpha\}\in [1/4,3/4] \Big\}.    
\end{equation*}
Clearly, $S$ is not a set of $P$-recurrence because it is not a set of $P$-recurrence for Weyl systems. Let us see that $S$ is a set of $Q$-averaging recurrence. Let $(X,\sB,\mu,T)$ be a measure-preserving system, and $A\subseteq X$ with $\mu(A)>0$, then
\begin{align*}
        &\lim_{N\to \infty} \frac{1}{N}\sum_{n\leq N} \ind{S}(n) \mu(A\cap T^{-q_1(n)} A \cap \cdots \cap T^{-q_r(n)}A)\\
        &= \frac{1}{2}\lim_{N\to \infty} \frac{1}{N}\sum_{n\leq N} \mu(A\cap T^{-q_1(n)} A \cap \cdots \cap T^{-q_r(n)}A)>0,
\end{align*}
where the positivity of the last expression comes the polynomial Szemerédi theorem \cite{Bergelson_Leibman96}.
\end{proof}

\small{
\bibliographystyle{abbrv}
\bibliography{refs}

}
\bigskip
\noindent
Felipe Hernández\\
\textsc{{\'E}cole Polytechnique F{\'e}d{\'e}rale de Lausanne} (EPFL)\par\nopagebreak
\noindent
\href{mailto:felipe.hernandezcastro@epfl.ch}
{\texttt{felipe.hernandezcastro@epfl.ch}}

\end{document}